\documentclass[11pt,reqno]{amsart}


\usepackage{xcolor}
\usepackage[T1]{fontenc}
\usepackage{amsmath}							
\usepackage{amssymb}
\usepackage{amsthm}
\usepackage{amscd}
\usepackage{amsfonts}
\usepackage{mathabx}
\usepackage{mathtools}
\usepackage[english]{babel}
\usepackage{stmaryrd}
\usepackage[all]{xy}
\usepackage{array}
\usepackage{multirow}       
\usepackage{comment}
\usepackage{subfiles}       
\usepackage[autostyle]{csquotes}

\usepackage{caption}
\usepackage{subcaption}
\usepackage{euler}

\usepackage{extarrows}

\usepackage[colorlinks, linktocpage, citecolor = purple, linkcolor = blue]{hyperref}
\usepackage{color}
\usepackage{comment}

\usepackage{tikz}									
\usetikzlibrary{matrix}
\usetikzlibrary{patterns}
\usetikzlibrary{positioning}
\usetikzlibrary{decorations.pathmorphing}
\usetikzlibrary{decorations.markings}
\usetikzlibrary{cd}

\usepackage{ytableau}
\usepackage{fullpage}
\usepackage[shortlabels]{enumitem}

\linespread{1.1}


\newtheorem{maintheorem}{Theorem}	

\newtheorem{theorem}{Theorem}[section]
\newtheorem{lemma}[theorem]{Lemma}
\newtheorem{proposition}[theorem]{Proposition}
\newtheorem{corollary}[theorem]{Corollary}

\theoremstyle{definition}
								
\newtheorem{definition}[theorem]{Definition}
\newtheorem{example}[theorem]{Example}

\newtheorem{remark}[theorem]{Remark}
\newtheorem*{remark*}{Remark}



\newcommand{\ZZ}{\mathbb{Z}}

\newcommand{\RR}{\mathbb{R}}


\renewcommand{\div}{\operatorname{div}}

\newcommand{\te}{\widetilde{e}}

\newcommand{\tg}{\widetilde{g}}
\renewcommand{\th}{\widetilde{h}}

\newcommand{\tv}{\widetilde{v}}
\newcommand{\tx}{\widetilde{x}}
\newcommand{\tC}{\widetilde{C}}

\newcommand{\tF}{\widetilde{F}}
\newcommand{\tG}{\widetilde{G}}

\newcommand{\tGa}{\widetilde{\Gamma}}

\newcommand{\ga}{\gamma}

\newcommand{\la}{\lambda}

\newcommand{\Ga}{\Gamma}
\newcommand{\La}{\Lambda}

\newcommand{\Si}{\Sigma}
\newcommand{\Om}{\Omega}

\newcommand{\fr}{\mathrm{fr}}
\newcommand{\free}{\mathrm{free}}

\DeclareMathOperator{\Ker}{Ker}
\DeclareMathOperator{\Coker}{Coker}
\DeclareMathOperator{\Hom}{Hom}

\DeclareMathOperator{\Vol}{Vol}

\let\Im\relax
\DeclareMathOperator{\Im}{Im}

\newcommand{\calB}{\mathcal{B}}
\newcommand{\calC}{\mathcal{C}}
\newcommand{\calD}{\mathcal{D}}

\newcommand{\calI}{\mathcal{I}}

\DeclareMathOperator{\Pic}{Pic}

\DeclareMathOperator{\trop}{trop}

\DeclareMathOperator{\rk}{rk}
\DeclareMathOperator{\Id}{Id}
\DeclareMathOperator{\Div}{Div}

\DeclareMathOperator{\Nm}{Nm}           

\DeclareMathOperator{\Prym}{Prym}       

\DeclareMathOperator{\ind}{ind}
\DeclareMathOperator{\mult}{mult}


\newcommand{\dil}{\mathrm{dil}}		
\newcommand{\ef}{\mathrm{ef}}
\newcommand{\ud}{\mathrm{ud}}	
\newcommand{\torf}{\mathrm{tf}}
\newcommand{\simp}{\mathrm{sim}}

\let\Rat\relax
\DeclareMathOperator{\Rat}{Rat}

\DeclareMathOperator{\Jac}{Jac}

\let\tilde\relax
\newcommand{\tilde}[1]{\widetilde{#1}}

\newcommand{\Felix}[1]{{\color{magenta}{\texttt Felix: #1}}}


\def\vertexsize {1.2pt}
\newcommand{\vertex}[3][1]{\fill (#2, #3) circle [radius = #1 * \vertexsize];}

\newcommand{\drawselfloop}[2]{
    \coordinate (A) at (#1,#2);
    \coordinate (c1) at (#1 + 2, #2 + 1);
    \coordinate (c2) at (#1 + 2, #2 - 1);
    
    \begin{scope}
        \clip (#1, #2 - 0.4) rectangle (#1 + 1.6, #2 + 0.4);    	
    	\draw[postaction = decorate]  (A) .. controls (c2) and (c1) .. (A);
    \end{scope}
}
\newcommand{\drawselfloopleft}[2]{
	\coordinate (A) at (#1,#2);
	\coordinate (c1) at (#1 - 2, #2 + 1);
	\coordinate (c2) at (#1 - 2, #2 - 1);
	\begin{scope}
        \clip (#1 - 1.6, #2 - 0.4) rectangle (#1, #2 + 0.4);    	
    	\draw[postaction = decorate]  (A) .. controls (c1) and (c2) .. (A);
    \end{scope}
}

\newcommand{\spiral}[2]{
    \begin{scope}
        \clip (#1, #2 - 0.4) rectangle (#1 + 1.7, #2 + 1.4);
    	\path[draw, postaction = decorate] (#1, #2+1) .. controls (#1+2.2, #2+2.4) and (#1+2.2, #2-1.4) .. (#1, #2);
    	\path[draw, postaction = decorate] (#1, #2) .. controls (#1+2, #2+2.2) and (#1+2, #2-1.2) .. (#1, #2+1);
    \end{scope}
}
\newcommand{\spiralleft}[2]{
    \begin{scope}
        \clip (#1 - 1.7, #2 - 0.4) rectangle (#1, #2 + 1.4);
    	\path[draw, postaction = decorate] (#1, #2+1) .. controls (#1-2.2, #2+2.4) and (#1-2.2, #2-1.4) .. (#1, #2);
    	\path[draw, postaction = decorate] (#1, #2) .. controls (#1-2, #2+2.2) and (#1-2, #2-1.2) .. (#1, #2+1);
    \end{scope}
}

\newcommand{\doublecover}[2]{%
	%
	\begin{tikzcd}
		\begin{tikzpicture}[scale = 1.5]
			#1
		\end{tikzpicture} \arrow[d] \\
		\begin{tikzpicture}[scale = 1.5]
			#2
		\end{tikzpicture} 
	\end{tikzcd}
}

\newcommand{\orient}[1]{
    \begin{scope}[decoration={
			markings,
			mark= at position 0.6 with {\arrow{>}}}
            ]
        #1
    \end{scope}
}





\title[A matroidal perspective on the tropical Prym variety]{A matroidal perspective on the tropical Prym variety}

\author{Felix R\"ohrle}
\address{Universit\"at T\"ubingen, Fachbereich Mathematik, Auf der Morgenstelle 10, 72076 T\"ubingen, Germany}
\email{\href{mailto:roehrle@math.uni-tuebingen.de}{roehrle@math.uni-tuebingen.de}}
 
\author{Dmitry Zakharov}
\address{Department of Mathematics, Central Michigan University, Mount Pleasant, MI 48859, USA}
\email{\href{mailto:dvzakharov@gmail.com}{dvzakharov@gmail.com}}

\begin{document}
	\maketitle

\begin{center}
    \emph{to the memory of Igor Krichever}
\end{center}

\begin{abstract}
    We associate a matroid $M(\widetilde{\Gamma}/\Gamma)$ to a harmonic double cover $\pi:\widetilde{\Gamma}\to \Gamma$ of metric graphs. The matroid $M(\widetilde{\Gamma}/\Gamma)$ is a geometric interpretation of Zaslavsky's signed graphic matroid. We show that the principalization $\Prym_p(\widetilde{\Gamma}/\Gamma)$ of the tropical Prym variety of the double cover can be reconstructed from $M(\widetilde{\Gamma}/\Gamma)$, equipped with certain additional decorations. We describe the simplification of the matroid $M(\widetilde{\Gamma}/\Gamma)$ and show that the Prym variety does not change under simplification.
    
\end{abstract}
	
	\setcounter{tocdepth}{1}
	\tableofcontents








\section{Introduction}

The two most-studied objects in algebraic geometry are curves and abelian varieties. The construction of the Jacobian variety $\Jac(C)$ of an algebraic curve $C$, in the guise of abelian integrals, goes back to the foundations of algebraic geometry in the 19th century. In moduli, the association $C\mapsto \Jac(C)$ is called the \emph{Torelli morphism} $\tau_J:\mathcal{M}_g\to \mathcal{A}_g$, where $\mathcal{M}_g$ and $\mathcal{A}_g$ are the moduli spaces of smooth genus $g$ curves and dimension $g$ principally polarized abelian varieties (ppavs), respectively. The Torelli morphism is injective, and an effective description of its image is the still-unsolved \emph{Schottky problem}. 

There is another natural way to associate a ppav to an algebraic curve. Given an \'etale double cover $\widetilde{C}\to C$ of smooth curves of genera $2g-1$ and $g$, respectively, the \emph{Prym variety} $\Prym(\widetilde C/C)$ is the connected component of the identity of the kernel of the norm map $\Jac(\tC)\to \Jac(C)$. Prym varieties have been extensively studied since their reintroduction by Mumford in~\cite{1974Mumford}. The Prym variety carries a principal polarization that is half the polarization induced from $\Jac(\tC)$, and the assignment $(\tC\to C)\mapsto \Prym(\widetilde C/C)$ defines the \emph{Prym--Torelli map} $\tau_P:\mathcal{R}_g\to\mathcal \mathcal{A}_{g-1}$, where $\mathcal{R}_g$ is the moduli space of connected \'etale double covers of smooth genus $g$ curves. Unlike $\tau_J$, the morphism $\tau_P$ is never injective, and it is likewise an open problem to describe its image.



Tropical geometry is a modern branch of mathematics that studies certain combinatorial, piecewise-linear objects, which mimic concepts from algebraic geometry. A \emph{tropical curve} of genus $g$ is a metric graph $\Ga$ with $h_1(\Ga)=g$, a \emph{harmonic morphism} of tropical curves is a piecewise-linear map satisfying a certain balancing condition, and a \emph{tropical abelian variety} is a real torus carrying an auxiliary integral structure. By varying lengths and combinatorial types, we obtain moduli spaces $\mathcal{M}_g^{\trop}$ and $\mathcal{A}_g^{\trop}$ of tropical curves and tropical ppavs, respectively. 

We can naturally associate a \emph{tropical Jacobian} $\Jac(\Ga)$ to a tropical curve $\Ga$ (see~\cite{2008MikhalkinZharkov}). The corresponding tropical Torelli map $\tau_J^{\trop}:\mathcal{M}_g^{\trop}\to \mathcal{A}_g^{\trop}$ (which is not injective, unlike $\tau_J$) was investigated in~\cite{2010CaporasoViviani} and~\cite{2011BrannettiMeloViviani}. The key insight in these works is that the tropical Jacobian $\Jac(\Gamma)$ is in fact entirely governed by the graphic matroid $M(\Gamma)$ of $\Gamma$. It is therefore possible to study $\tau_J$, and in particular describe its fibers, from a purely matroidal perspective, and similarly to give a matroidal interpretation of the image of $\tau_J^{\trop}$. Indeed, the relevance of the graphic matroid in this context was well-known before the advent of tropical geometry, see e.g.~\cite{Gerritzen}. 

The tropical analogue of an \'etale double cover is a harmonic morphism $\tGa\to \Ga$ of metric graphs of degree two. The \emph{tropical Prym variety} $\Prym(\tGa/\Ga)$ of $\tGa\to \Ga$ was defined in~\cite{2018JensenLen}, in analogy with the algebraic setting, and is a polarized tropical abelian variety of dimension $g(\tGa)-g(\Ga)$. The tropical Prym variety was further investigated in~\cite{2021LenUlirsch},~\cite{2022LenZakharov},~\cite{2022RoehrleZakharov}, and~\cite{2023GhoshZakharov}.

\medskip

The purpose of this paper is to develop a matroidal perspective on the tropical Prym variety, with the aim of studying the tropical Prym--Torelli map. We show how to associate to a harmonic double cover $\pi : \tilde \Gamma \to \Gamma$ a dual pair of matroids $M(\tilde\Gamma / \Gamma)$ and $M^*(\tilde\Gamma / \Gamma)$ (see Definition~\ref{def:Prymmatroid}). These matroids turn out to be \emph{signed (co-)graphic matroids} in the sense of~\cite{1982Zaslavsky}, and the precise translation of Zaslavsky's definition into the language of double covers is given in Proposition~\ref{prop:Prymmatroid}. In Proposition~\ref{prop:signedgraphic} we give interpretations of the cryptomorphic descriptions of $M(\tilde \Gamma / \Gamma)$ in terms of the double cover $\tilde\Gamma \to \Gamma$. In Section~\ref{sec:recovery} we prove the main result of our paper and show that the matroid $M(\tGa/\Ga)$, suitably decorated, recovers all of the information of the Prym variety $\Prym(\tGa/\Ga)$ except for its polarization. We canonically associate a tropical ppav to $\Prym(\tGa/\Ga)$, the \emph{principalization} $\Prym_p(\tGa/\Ga)$, and signed cographic matroid recovers this object:


\begin{maintheorem}[Theorem~\ref{thm:Prym_from_matroid}]
	Let $\pi : \tilde \Gamma \to \Gamma$ be a harmonic double cover of metric graphs, and let $M = M(\tilde\Gamma / \Gamma)$ be the signed graphic matroid equipped with index function $\ind:\mathcal{I}(M^*)\to \ZZ_{>0}$ (Definition~\ref{def:index}), orientation $\overrightarrow{M}$ on $M$ induced by an orientation of $\pi$ (Section~\ref{subsec:orientedmatroids}), and edge length function $\ell:E(\Ga)\to \RR$. Then we can associate a tropical ppav $\Prym(M)$ to the matroidal data such that
	\[ \Prym_p(\tilde \Gamma / \Gamma) \cong \Prym(M). \]
\end{maintheorem}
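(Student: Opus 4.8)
The plan is to view both sides of the claimed isomorphism as polarized tropical abelian varieties, each presented by a pair $(\Lambda, Q)$ consisting of a lattice $\Lambda$ and a positive-definite symmetric bilinear form $Q$, and then to match these two pairs. On one side I would take the explicit presentation of $\Prym(\tGa/\Ga)$ developed earlier: its lattice is the anti-invariant part of $H_1(\tGa,\ZZ)$ under the deck involution, equivalently the kernel of the norm map $\Nm \colon \Jac(\tGa)\to\Jac(\Ga)$, and its polarization is the restriction of the length-weighted intersection form of $\Jac(\tGa)$. The principalization $\Prym_p(\tGa/\Ga)$ is then obtained from this pair by the canonical procedure that replaces a non-principal polarization by its principalized counterpart. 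On the other side I would define $\Prym(M)$ directly from the matroidal data, letting its lattice be the circuit lattice of the signed graphic matroid $M$ and building its form out of the orientation $\overrightarrow{M}$, which supplies signs, the edge length function $\ell$, which supplies the metric weights, and the index function $\ind$, which supplies the scaling needed for principalization.

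The first step is an isomorphism of the two underlying lattices. Here the cryptomorphic dictionary of Proposition~\ref{prop:signedgraphic}, translating the circuits and cocircuits of $M(\tGa/\Ga)$ into structures of the double cover, does the essential work: the circuit lattice of the signed graphic matroid should be canonically identified with the anti-invariant homology of the cover. The orientation $\overrightarrow{M}$ induced by an orientation of $\pi$ selects compatible bases on both sides, so this step reduces to a comparison of signed incidence data.

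The substance of the proof is matching the two polarizations after principalization. On the Prym side the restricted intersection form is in general not principal, and $\Prym_p(\tGa/\Ga)$ is obtained by saturating the lattice so that the form becomes unimodular; the correction is measured by an integer determinant. On the matroidal side I would compute the Gram matrix of the form assembled from $\ell$, $\overrightarrow{M}$, and $\ind$, and check that its entries reproduce the restricted intersection form while the integers recorded by $\ind$ are exactly those removed under principalization. The decisive input is the Kirchhoff--Prym formula, a signed analogue of the matrix--tree theorem, which expresses the relevant determinant as a weighted enumeration over the bases of $M$; this identifies the two volumes simultaneously and pins down the principalization.

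The main obstacle is this last point, namely showing that $\ind \colon \calI(M^*)\to\ZZ_{>0}$ records exactly the lattice adjustment performed by principalization. The difficulty is that principalization is a global operation on the lattice, governed by the torsion of a quotient, whereas $\ind$ is defined set-by-set on the independent sets of $M^*$; reconciling the two requires the Kirchhoff--Prym formula to decompose the global determinant into local contributions indexed by the combinatorics of $M$. Once that formula is in hand, I expect the remaining verification --- that these local indices assemble into the principalizing change of basis --- to be a careful but routine diagonalization of the Gram matrix in a basis adapted to $\overrightarrow{M}$.
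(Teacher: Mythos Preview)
Your overall architecture---identify the lattice of $\Prym_p$ with a lattice built from the circuits of $M$, then match the bilinear forms---is correct in outline, but the mechanism you propose for the second half is off in a way that would not close.

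First, you misidentify the role of the index function. In the paper (Proposition~\ref{prop:indexformula} and Equation~\eqref{eq:cycleformula}), $\ind$ is used \emph{before} any form is assembled: for each circuit $C$ of $M$ it determines whether the coefficient $|\gamma_C(e)|$ is $1$ or $2$ (detecting whether $e$ is a bridge of $G[C]$), and together with the orientation this produces an explicit cycle $\gamma_C\in\ZZ^{E_{\ud}(\Gamma)}$. The matroidal lattice $\Lambda$ is then simply the span of these $\gamma_C$. The index function is not recording a global torsion correction or a ``lattice adjustment performed by principalization''; principalization here is the trivial step of replacing $(\Coker\pi^*)^{\torf}$ by $\Im\xi\cong\Ker\pi_*$, after which the pptav is just $(\Ker\pi_*,\Ker\pi_*,[\cdot,\cdot])$ with the identity polarization (see the discussion around Equation~\eqref{eq:pairingPrym}). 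There is no saturation or diagonalization to perform.

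Second, your proposed use of the Kirchhoff--Prym volume formula cannot finish the argument: equality of determinants is far weaker than isometry of the Gram matrices. The paper instead proves directly (Proposition~\ref{prop:fundcyclePrymspan}) that the fundamental cycles $\gamma_C$ span $\Ker\pi_*$, so the two lattices are literally equal as sublattices of $C_1(\tGa,\ZZ)$. Once that is established, the pairings match edge by edge because $[\te^+-\te^-,\te^+-\te^-]=2\ell(e)$, which is exactly the defining formula~\eqref{eq:Prympairing} for the form on $\Prym(M)$. The genuine content you are missing is this spanning statement and the pointwise construction of $\gamma_C$ from $(\overrightarrow{M},\ind)$; once you have those, no volume computation is needed.
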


It follows that two double covers with isomorphic decorated matroidal data have isomorphic Prym varieties, see Corollary~\ref{cor:criterion}. Example~\ref{ex:trees} shows that distinct double covers may have identical matroidal data and hence isomorphic Prym varieties.

For any matroid $M$, we can construct its \emph{simplification} $M_{\simp}$, having no circuits of size 1 or 2. In~\cite{2010CaporasoViviani} and~\cite{2011BrannettiMeloViviani} it is shown that simplifying the cographic matroid of $\Ga$ (contracting bridges and parallel edges, with an appropriate redistribution of edge lengths) does not change the Jacobian $\Jac(\Ga)$. In Section~\ref{sec:fibers}, we describe an analogous simplification procedure for double covers, and show that it does not change the Prym variety.


\begin{maintheorem}[Theorem~\ref{thm:simplification}] Let $\pi:\tGa\to \Ga$ be a double cover of metric graphs and let $\pi_{\simp}:\tGa_{\simp}\to \Ga_{\simp}$ be a simplification of $\pi$ (see Definition~\ref{def:simplification}). Then $\Prym_p(\tilde\Gamma / \Gamma) \cong \Prym_p(\tilde\Gamma_\simp / \Gamma_\simp)$.
    
\end{maintheorem}

In contrast to the case of metric graphs, we show by example that double covers with non-isomorphic simple matroids may have isomorphic Prym varieties.

In hindsight, it is clear that the signed graphic matroid $M(\tGa/\Ga)$ and its dual ought to have a close relation to the Prym variety $\Prym(\tGa/\Ga)$. For example, the papers~\cite{2022LenZakharov} and~\cite{2023GhoshZakharov} proved a formula for the volume of $\Prym(\tGa/\Ga)$ as a sum over certain subsets of $E(\Ga)$ called \emph{odd genus one decompositions} or \emph{ogods}, and these turn out to be the bases of $M^*(\tilde\Gamma / \Gamma)$ (see Section~\ref{subsec:volume} for details).

A natural question is to investigate the tropical Prym--Torelli morphism $\tau^{\trop}_P$ that associates to a harmonic double cover $\tGa\to \Ga$ its Prym variety $\Prym(\tGa/\Ga)$. There are two related issues with defining $\tau^{\trop}_P$. First, the polarization type of $\Prym(\tGa/\Ga)$ may change when the double cover $\tGa\to \Ga$ is deformed, specifically if the number of connected components of the dilation subgraph changes. Second, the behavior of $\Prym(\tGa/\Ga)$ is not continuous under such deformations. In future work, we plan to give an alternative definition of the tropical Prym variety that solves these issues, and investigate the redefined Prym--Torelli map from a matroidal perspective. The above Theorem shows that some of the non-injectivity of $\tau^{\trop}_P$ is purely matroidal in nature, and describing the remaining non-injectivity is the subject of ongoing research.

\medskip


We end by outlining a series of tropical problems inspired by Krichever's solution to the algebraic Schottky problems, which we now recall. Let $X$ be a ppav of dimension $g$ with principal polarization $\Theta$, and let $K(X)$ be the \emph{Kummer variety} of $X$, the image of $X$ under the map $\phi_{2\Theta}:X\to \mathbb{P}^{2^g-1}$. Fay's trisecant identity~\cite{2006Fay} states that if $X=\Jac(C)$ is a Jacobian, then $K(X)$ admits a four-parameter family of trisecant lines, indexed by quadruples of points of $C\subset\Jac(C)$. Gunning~\cite{1982Gunning} showed that the existence of such a family characterizes Jacobians among all ppavs. Welters~\cite{1983Welters},~\cite{1984Welters} strengthened this result by showing that Jacobians are characterized by the existence of a formal one-parameter family of flexes of $K(X)$ (limit trisecants when the three points are equal), and conjectured that the existence of a single trisecant of the Kummer variety characterizes Jacobians among ppavs. After a number of progressively stronger results, Krichever proved Welters's conjecture in~\cite{2006KricheverA} and~\cite{2010Krichever}. An analogous solution to the Prym--Schottky problem was given by Grushevsky and Krichever in~\cite{2010GrushevskyKrichever}. Beauville and Debarre~\cite{1987BeauvilleDebarre} showed that if $X=\Prym(\widetilde{C}/C)$ is a Prym variety, then $K(X)$ admits a four-parameter family of quadrisecant planes, and it was shown in~\cite{2010GrushevskyKrichever} that the existence of a symmetric pair of quadrisecants (but not a single quadrisecant) characterizes Prym varieties among all ppavs.

It is natural to formulate an analogous series of questions in the tropical setting. A theory of tropical theta functions has already been developed in~\cite{2008MikhalkinZharkov} and~\cite{foster2018non}. Fay's trisecant identity is based on an elementary property of divisorial rank (see Proposition 11.9.1 in~\cite{BirkenhakeLange}). It turns out that the analogous statement for tropical Baker--Norine rank is false, so it is not immediately clear what happens to Fay's identity under tropicalization. Nevertheless it is natural to ask whether the theta functions of a tropical Jacobian satisfy any identities, whether these identities have a geometric manifestation, whether they characterize tropical Jacobians among all tropical ppavs, and whether this characterization is computationally effective. Finally, all these questions can be asked about the tropical Prym variety as well.




\medskip

\textbf{Acknowledgments.} The authors would like to thank Kevin K\"uhn, Yoav Len, Dhruv Ranganathan, Victoria Schleis, Benjamin Schr\"oter, and Martin Ulirsch for insightful discussions. The first author gratefully acknowledges support from the SFB-TRR 195 ``Symbolic Tools in Mathematics and their Application'' of the German Research Foundation (DFG).

	\section{Setup and definitions}

We begin by reviewing a number of basic definitions concerning graphs, metric graphs, harmonic morphisms, double covers, and tropical abelian varieties.

\subsection{Graphs, harmonic morphisms, and double covers} \label{subsec:doublecovers}

A \emph{graph} $G$ consists of a set of \emph{vertices} $V(G)$, a set of \emph{half-edges} $H(G)$, a \emph{root map} $r:H(G)\to V(G)$, and a fixed-point-free involution $\iota:H(G)\to H(G)$. The \emph{edges} of $G$ are the orbits of $\iota$, with each edge consisting of two half-edges, and the set of edges is denoted $E(G)$. We allow graphs with loops and multiedges. The \emph{tangent space} to a vertex $v\in V(G)$ is the set $T_vG=r^{-1}(v)$ of half-edges rooted at $v$. The \emph{genus} $g(G)$ of a graph $G$ is its first Betti number:
\[
g(G)=|E(G)|-|V(G)|+1.
\]
Unless otherwise specified, we consider only finite connected graphs. Given a set of edges $F\subset E(G)$, we denote by $G\backslash F$ the (possibly disconnected) graph obtained by removing the edges in $F$, but retaining all vertices, even isolated ones. We denote $G[F]$ the minimal subgraph of $G$ containing all edges in $F$, in other words $G[F]$ is obtained from $G\setminus \big(E(G)\backslash F \big)$ by removing all isolated vertices. 

A \emph{morphism} of graphs $f:\tG\to G$ is a pair of maps $f:V(\tG)\to V(G)$ and $f:H(\tG)\to H(G)$ commuting with the root and involution maps, and thus inducing a map $f:E(\tG)\to E(G)$ on the edges. A \emph{harmonic morphism} of graphs is a pair $(f,d_f)$ consisting of a morphism of graphs $f:\tG\to G$ and a \emph{local degree} function $d_f:V(\tG)\cup H(\tG)\to \ZZ_{>0}$, such that $d_f(\th)=d_f(\th')$ for any edge $\te=\{\th,\th'\}\in E(\tG)$ and such that the local balancing condition
\begin{equation}
d_f(\tv)=\sum_{\th\in T_{\tv}\tG\cap f^{-1}(h)}d_f(\th)
\label{eq:harmonic}
\end{equation}
holds for each vertex $\tv\in V(\tG)$ and each half-edge $h\in T_{f(\tv)}G$. A harmonic morphism $f:\tG\to G$ to a connected target graph $G$ is surjective and has a well-defined \emph{global degree} given by
\[
\deg f =\sum_{\tv\in f^{-1}(v)}d_f(\tv)=\sum_{\th\in f^{-1}(h)}d_f(\th)
\]
for any $v\in V(G)$ or any $h\in H(G)$.

A harmonic morphism $p:\tG\to G$ of global degree two is called a \emph{double cover}, and we describe its structure in detail. A vertex $v\in V(G)$ is called \emph{dilated} if it has a unique preimage that we label $p^{-1}(v)=\{\tv\}$, with local degree $d_p(\tv)=2$. Similarly, a half-edge $h\in H(G)$ is \emph{dilated} if $p^{-1}(h)=\{\th\}$ with $d_p(\th)=2$, and a dilated edge $e=\{h,h'\}$ (consisting of dilated half-edges) has a unique edge $\te=\{\th,\th'\}$ lying over it. The root vertex of a dilated half-edge is dilated, so the dilated set forms a subgraph $G_{\dil}\subset G$ called the \emph{dilation subgraph}, which is isomorphic to its preimage $p^{-1}(G_{\dil})$. We say that $p$ is a \emph{free} double cover if $G_{\dil}=\emptyset$ and \emph{dilated} otherwise. We say that $p$ is \emph{edge-free} if $G_{\dil}$ consists of only vertices. The \emph{dilation index} of a double cover is defined to be
\[
d(\tG/G)=\begin{cases}
\mbox{number of connected components of }G_{\dil}, & \text{if } p\mbox{ is dilated,}\\
1, & \text{if } p\mbox{ is free.}
\end{cases}
\]

A vertex $v\in V(G)$ having two preimages is called \emph{undilated}, and we label its preimages $p^{-1}(v)=\{\tv^+,\tv^-\}$, where the local degrees are $d_p(\tv^{\pm})=1$. Similarly, a half-edge $h\in H(G)$ is \emph{undilated} if $p^{-1}(h)=\{\th^+,\th^-\}$ with $d_p(\th^{\pm})=1$. If the root vertex $v=r(h)$ is undilated, we label the preimages of $h$ so that $r(\th^{\pm})=\tv^{\pm}$ (if $v$ is dilated then $r(\th^{\pm})=\tv$ regardless of labeling). An undilated edge $e\in E(G)$ is called \emph{free} if both root vertices of $e$ (which may be the same) are undilated. The \emph{free subgraph} $G_{\fr}\subset G$ consists of the undilated vertices and the free edges, and the restriction $p|_{p^{-1}(G_{\fr})}:p^{-1}(G_{\fr})\to G_{\fr}$ of $p$ to the free subgraph is a degree two covering space in the topological sense.

The structure of the graph $\tG$ is determined by the involution map on the preimages $\th^{\pm}$ of the undilated half-edges, in other words by how the $\th^{\pm}$ pair up into edges. If the edge $e=\big\{h,\iota(h) \big\}\in E(G)$ is undilated but not free, then we can label the preimages of the half-edges in such a way that the edges of $\tG$ lying above $e$ are $\te^+=\big\{\th^+,\widetilde{\iota(h)}^+\big\}$ and $\te^-=\big\{\th^-,\widetilde{\iota(h)}^-\big\}$, in other words the involution is $\iota(\th^{\pm})=\widetilde{\iota(h)}^{\pm}$. If $e$ is free, however, then we record the involution by
\[
\sigma(e)=\begin{cases}
    +1, & \text{if } \iota(\th^{\pm})=\widetilde{\iota(h)}^{\pm},\\
    -1, & \text{if } \iota(\th^{\pm})=\widetilde{\iota(h)}^{\mp}.    
\end{cases}
\]
Given an orientation on $G$, we always give $\tG$ the induced orientation. We label the oriented preimages of an oriented edge $e=(h,\iota(h))$ as $\te^+=(\th^+,\iota(\th^+))$ and $\te^-=(\th^-,\iota(\th^-))$. 

The double cover $p:\tG\to G$ is thus determined by $G$, the choice of a dilation subgraph $G_{\dil}\subset G$ that also determines $G_{\fr}$, and a parity assignment $\sigma:E(G_{\fr})\to \{\pm 1\}$, in other words the structure of a \emph{signed graph} on $G_{\fr}$ (in fact, as we shall see in Proposition~\ref{prop:Prymmatroid}, the set of undilated edges $E(G)\backslash E(G_{\dil})$ carries the structure of a signed graph, in the more general sense of Zaslavsky~\cite{1982Zaslavsky}). Exchanging the preimages of an undilated vertex $v\in V(G_{\fr})$ swaps the signs of all free edges rooted at it, so the parity assignment $\sigma$ is determined up to \emph{vertex switching equivalence}. In modern terminology, the switching equivalence class of a parity assignment $\sigma$ is a well-defined element of the simplicial cohomology group $H^1(G_{\fr},\ZZ/2\ZZ)$, which is identified with the set of $\Hom \big( \pi_1(G_{\fr}),\,\ZZ/2\ZZ \big)$ of free double covers by covering space theory and the universal coefficient theorem.

For another perspective on double covers note that $p:\tG\to G$ determines an involution $i:\tG\to \tG$ that changes signs on undilated elements and fixes dilated elements. Conversely, given an involution $i:\tG\to \tG$, the quotient map $p:\tG\to \tG/i$ naturally has the structure of a double cover, with the dilation subgraph of $G$ corresponding to the fixed subgraph of $\tG$.


\subsection{Edge contractions} \label{subsec:edgecontraction} We extensively use the \emph{edge contraction} operation, which is a graph-theoretic deformation that allows us to move through the moduli spaces of graphs and double covers. Let $G$ be a graph, let $F\subset E(G)$ be a set of edges, and let $H = G[F]$ be the subgraph generated by these edges.  We denote by $G_F$ the graph obtained by contracting each connected component of $H$ to a separate vertex. An edge contraction induces a natural surjective contraction homomorphism $H_1(G,\ZZ)\to H_1(G_F,\ZZ)$.

An important property of contraction is that it can be performed for a harmonic morphism. Let $f:\tG\to G$ be a harmonic morphism of graphs, let $F\subset E(G)$ be a set of edges, and let $\widetilde{H} = \tilde G[\tilde F]$ be the subgraph of $\tG$ generated by $\widetilde{F}=f^{-1}(F)$. Consider the map $f_F:\tG_{\widetilde{F}}\to G_F$, where the vertex $\tv_i\in V(\tG_{\widetilde F})$ corresponding to a connected component $\widetilde{H}_i$ of $\widetilde{H}$ is mapped to the vertex corresponding to the connected component $f(\widetilde{H}_i)$ of $H$. It is elementary to check that $f_F$ is harmonic, provided that we set the local degree of $f_F$ at the vertex $\tv_i$ to be the global degree of $f$ on the subgraph $\widetilde{H}_i$. 

\begin{example} Let $p:\tG\to G$ be a double cover, let $F\subset E(G)$ be a set of edges, and assume for simplicity that the subgraph $H = G[F]$ generated by $F$ is connected. If $H$ has no dilated vertices, then the restricted cover $p|_{p^{-1}(H)}:p^{-1}(H)\to H$ is a free double cover, and $p^{-1}(H)$ has two connected components if and only if this double cover is trivial. In this case, the vertex $v\in V(G_F)$ corresponding to $H$ is undilated. However, if $p|_{p^{-1}(H)}$ is free but nontrivial, or if $H$ has at least one dilated vertex, then the vertex $v$ is dilated. In particular, the contraction of a free double cover need not be free.
\end{example}

\begin{example} \label{ex:resolution} 
We will frequently make use of the following construction, visualized in Figure~\ref{fig:free_resolution}. Let $p:\tG\to G$ be a dilated double cover. We construct the edge-free double cover $p_{\ef}:\tG_{\ef}\to G_{\ef}$ by contracting all dilated edges, in other words by contracting each connected component $G_i$ of $G_{\dil}$ to a dilated vertex $v_i$. We then resolve $p_{\ef}$ into a free double cover $p_{\free}:\tG_{\free}\to G_{\free}$ by adding a loop $e_i$ at each dilated vertex $v_i$ of $G_{\ef}$, and replacing the preimage vertex $\tv_i$ with two vertices $\tv^{\pm}_i$ joined by two parallel edges ${\te}^{\pm}_i$ mapping to the loop $e_i$. The cover $p_{\free}$ is not defined canonically (unlike $p_{\ef}$), since we must choose how to reattach the edges of $\tG_{\ef}$ to the new vertices $\tv^{\pm}_i$. The cover $p_{\ef}$ is obtained from $p_{\free}$ by contracting the new loops $e_i$. 
\end{example}

\begin{figure}
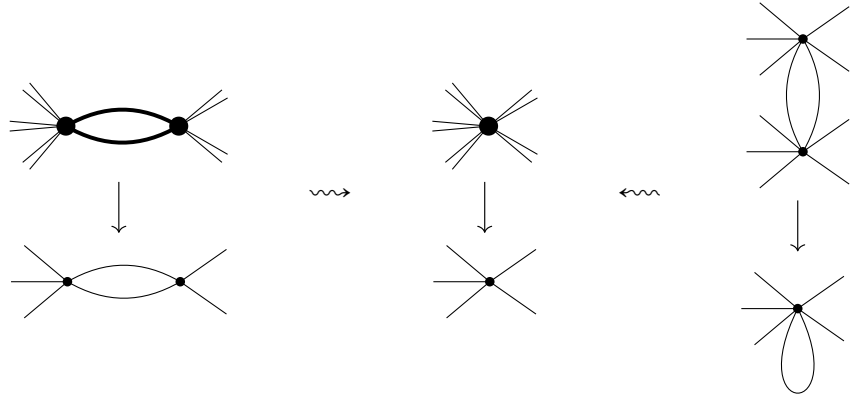

	\centering
	\doublecover{
	\vertex[2]{0}{0}
	\vertex[2]{1}{0}
	\path[draw, line width = 1.5pt] (0,0) to[bend left = 30] (1,0) to[bend left = 30] (0,0);
	
	\foreach \i in {130, 140, 175, 185, 220, 230}{
		\path[draw] (0,0) --+ (\i:0.5);
	}

	\foreach \i in {30, 40, -30, -40}{
		\path[draw] (1,0) --+ (\i:0.5);
	}
}{
	\vertex{0}{0}
	\vertex{1}{0}
	\path[draw] (0,0) to[bend left = 30] (1,0) to[bend left = 30] (0,0);
	\path[draw] (0, 0) --+ (140:0.5);
	\path[draw] (0, 0) --+ (180:0.5);
	\path[draw] (0, 0) --+ (220:0.5);
	\path[draw] (1, 0) --+ (35:0.5);
	\path[draw] (1, 0) --+ (-35:0.5);
}%
\qquad $\rightsquigarrow$ \qquad
\doublecover{
	\vertex[2]{0}{0}
	
	\foreach \i in {130, 140, 175, 185, 220, 230}{
		\path[draw] (0,0) --+ (\i:0.5);
	}
	
	\foreach \i in {30, 40, -30, -40}{
		\path[draw] (0,0) --+ (\i:0.5);
	}
}{
	\vertex{0}{0}
	\path[draw] (0, 0) --+ (140:0.5);
	\path[draw] (0, 0) --+ (180:0.5);
	\path[draw] (0, 0) --+ (220:0.5);
	\path[draw] (0, 0) --+ (35:0.5);
	\path[draw] (0, 0) --+ (-35:0.5);
}%
\qquad $\leftsquigarrow$ \qquad
\doublecover{
	\vertex{0}{0}
	\vertex{0}{1}
	\path[draw] (0,0) to[bend left = 30] (0, 1) to[bend left = 30] (0,0);
	
	\foreach \i in {140, 180, 220, 35, -35}{
		\path[draw] (0,0) --+ (\i:0.5);
		\path[draw] (0,1) --+ (\i:0.5);
	}
}{
	\clip(-0.5, -0.8) rectangle (0.5, 0.4);
	\vertex{0}{0}
	\path[draw] (0,0) .. controls (0.5, -1) and (-0.5, -1) .. (0,0);
	\path[draw] (0, 0) --+ (140:0.5);
	\path[draw] (0, 0) --+ (180:0.5);
	\path[draw] (0, 0) --+ (220:0.5);
	\path[draw] (0, 0) --+ (35:0.5);
	\path[draw] (0, 0) --+ (-35:0.5);
}
	\caption{Local picture of a dilated double cover $p$ (left) with the edge contractions producing $p_{\ef}$ (middle). On the right: $p_{\free}$ with its contraction back to $p_{\ef}$.}
	\label{fig:free_resolution}
\end{figure}

\subsection{Metric graphs}

Let $G$ be a graph and let $\ell:E(G)\to \RR_{>0}$ be an \emph{edge length} assignment. The pair $(G,\ell)$ defines a \emph{metric graph} $\Ga$, which is the topological space obtained by viewing each edge $e\in E(G)$ as a closed interval of length $\ell(e)$ and attaching the intervals according to the structure of $G$. We endow $\Ga$ with the shortest-path metric. The pair $(G,\ell)$ is called a \emph{model} for $\Ga$, and the genus of $\Ga$ is the genus of any underlying model. We will often abuse notation and refer to the edges and vertices of a metric graph $\Ga$, with respect to an implied choice of model. 

A \emph{harmonic morphism of metric graphs} $\phi:\tGa\to \Ga$ is a continuous, piecewise-linear map whose slopes $d_f(\te)$ along the edges $\te\in E(\tG)$ are positive integers satisfying the slope-balancing condition~\eqref{eq:harmonic}. In other words, there exist models $(\tG,\widetilde{\ell})$ and $(G,\ell)$ of $\tGa$ and $\Ga$, respectively, and a harmonic morphism of graphs $f:\tG\to G$ modeling $\phi$, such that $d_{\phi}(\te)=d_f(\te)$ for all $\te\in E(\tGa)$. The continuity of $\phi$ imposes the restriction
\[
\ell \big(\phi(\te) \big)=d_{\phi}(\te)\widetilde{\ell}(\te)
\]
for all edges $\te\in E(\tG)$. If $\phi:\tGa\to \Ga$ is a double cover of metric graphs, then $\phi$ is a factor two dilation along a dilated edge and an isometry along an undilated edge, explaining the terminology.

\subsection{Divisor theory on metric graphs} For the reader's convenience, we briefly recall the divisor theory on a metric graph $\Ga$. The divisor group $\Div(\Ga)$ is the free abelian group on the points of $\Ga$. A rational function on $\Ga$ is a real-valued piecewise-linear function with integer slopes, and the group of rational functions is denoted $\Rat(\Ga)$. The divisor of a rational function is
\[
\div:\Rat(\Ga)\longrightarrow \Div(\Ga),\qquad f\longmapsto\sum_{x\in \Ga}(\mbox{sum of incoming slopes at }x)\cdot x
\]
The Picard group $\Pic(\Ga)$ is the cokernel of $\div$. The degree map
\[
\deg:\Div(\Ga)\longrightarrow \ZZ,\qquad \sum_{x\in \Ga}a_x\cdot x\longmapsto \sum_{x\in \Ga}a_x
\]
descends to $\Pic(\Ga)$, and we denote $\Pic_d(\Ga)$ the set of equivalence classes of degree $d$ divisors. 


\subsection{Tropical abelian varieties} A \emph{real torus with integral structure} $\Sigma$ of dimension $g$, or an \emph{integral torus} for short, is defined by a triple $(\La,\La',[\cdot,\cdot])$, where $\La$ and $\La'$ are free abelian groups of rank $g$ and $[\cdot,\cdot]:\La\times \La'\to \RR$ is a nondegenerate pairing. The torus itself is $\Sigma=\Hom(\La,\RR)/\La'$, where $\La'$ is embedded in $\Hom(\La,\RR)$ via the assignment $\la'\mapsto [\cdot,\la']$. The integral lattice $\Hom(\La,\ZZ)$ in the universal cover $\Hom(\La,\RR)$ of $\Sigma$ may be viewed as a tropical analogue of a complex structure on a real torus.

Let $\Si_1=(\La_1,\La_1',[\cdot,\cdot]_1)$ and $\Si_2=(\La_2,\La_2',[\cdot,\cdot]_2)$ be integral tori, and let $f^\#:\La_2\to \La_1$ and $f_\#:\La_1'\to \La_2'$ be homomorphisms satisfying the relation
\[
\big[f^\#(\la_2),\la_1' \big]_1 = \big[\la_2,f_\#(\la_1')\big]_2
\]
for all $\la_2\in \La_2$ and $\la_1'\in \La_1'$. The Hom-dual $\Hom(\La_1,\RR)\to \Hom(\La_2,\RR)$ of $f^\#$ restricts to $f_\#$ on $\La_1'$, hence the pair $f=(f^\#,f_\#)$ defines a \emph{homomorphism of integral tori} $f:\Sigma_1\to \Sigma_2$. The maps $f^\#$ and $f_\#$ have the same rank, which is denoted $\rk f$ and which is equal to the dimension of $f(\Sigma_1)$. 

Let $f:\Sigma_1\to \Sigma_2$ be a homomorphism of integral tori. The group-theoretic kernel $\Ker f$ is the direct sum of a finite abelian group and an integral torus $(\Ker f)_0$, which we call the \emph{kernel torus} of $f$. It is defined by the triple
\[
    (\Ker f)_0  = \big( (\Coker f^\#)^\mathrm{tf}, \Ker f_\#, [\cdot, \cdot]_K \big),
\]
where for an abelian group $A$ we denote $A^\mathrm{tf}$ the quotient by its torsion subgroup,  and $[\cdot,\cdot]_K$ is the pairing $(\Coker f^\#)^\mathrm{tf}\times \Ker f_\#\to \RR$ induced by $[\cdot, \cdot]_1$. The pair $i=(i^\#,i_\#)$, where  $i_\# : \Ker f_\# \hookrightarrow \Lambda_1'$ is the natural inclusion and $i^\# : \Lambda_1 \twoheadrightarrow (\Coker f^\#)^\mathrm{tf}$ is the quotient map, defines the natural injective homomorphism of real tori $i : (\Ker f)_0 \to \Sigma_1$.

A \emph{polarization} on an integral torus $\Sigma=(\La,\La',[\cdot,\cdot])$ is a group homomorphism $\xi:\La'\to \La$ with the property that the induced bilinear form $(\cdot,\cdot)=[\xi(\cdot),\cdot]:\La'_\RR\times \La'_\RR\to \RR$ is symmetric and positive definite. A polarization is necessarily injective, and its \emph{type} is determined by the invariant factors of its Smith normal form. A polarization is \emph{principal} if it is bijective, in other words if its type is $(1,\ldots,1)$. An integral torus with a principal polarization is called a \emph{principally polarized tropical abelian variety}, or \emph{pptav} for short. Given an integral torus $\Sigma=(\Lambda,\Lambda',[\cdot,\cdot])$ with a polarization $\xi:\Lambda'\to \Lambda$, we define the \emph{principalization} to be the integral torus $\Sigma_p=(\Im \xi, \Lambda',[\cdot,\cdot])$ with the principal polarization $\xi:\Lambda'\to\Im \xi\cong \Lambda'$. The injection $j:\Im \xi\to \Lambda$ and the identity map $\Id:\Lambda'\to \Lambda'$ define a homomorphism $(j,\Id):\Sigma\to \Sigma_p$, which is a \emph{dilation} in the sense of Definition~4.6 of \cite{2022RoehrleZakharov}, so in particular it is a bijection on the underlying tori.\footnote{In Lemma~4.11 of \cite{2022RoehrleZakharov} we introduced a different notion of principalization $\Sigma^{\mathrm{pp}}$ which naturally comes with a dilation $\Sigma^{\mathrm{pp}} \to \Sigma$. The combined morphism $\Sigma^{\mathrm{pp}} \to \Sigma_p$ is of degree $2^g$ and is given by global multiplication by 2. In this article, we work with $\Sigma_p$ because it seems more natural from the matroidal perspective, but the reconstruction of the Prym variety in Section~\ref{sec:recovery} can be phrased for $\Prym^{\mathrm{pp}}$ as well, see also Remark~\ref{rem:reconstruction}.}

Let $f:\Sigma_1\to \Sigma_2$ be a homomorphism of integral tori, and let $\xi_2$ be a polarization on $\Sigma_2$. If $f$ is \emph{finite}, in other words if $\rk f=\dim \Sigma_1$, then the homomorphism $\xi_1=f^\#\circ \xi_2\circ f_\#$ determines an \emph{induced polarization} on $\Sigma_1$. As in the algebraic setting, a polarization induced from a principal polarization is not necessarily principal; we shall see that the Prym variety of a double cover is an  example of this. 

Let $\Sigma=(\Lambda,\Lambda',[\cdot,\cdot])$ be an integral torus of dimension $n$ with polarization $\xi$. The bilinear form $(\cdot,\cdot)=[\xi(\cdot),\cdot]$ on $\Lambda'$ extends to an inner product on the vector space $\Hom(\Lambda,\mathbb{R})\supset \La'$, which is the universal cover of $\Sigma=\Hom(\Lambda,\mathbb{R})/\La'$, and hence defines a translation-invariant Riemannian metric on $\Sigma$. The \emph{volume of $\Sigma$} is the volume of a fundamental parallelotope and is given by the Grammian determinant
\begin{equation}
\Vol^{2}(\Sigma)=\det \big( (\lambda_i', \lambda_j') \big)_{i,j},
\label{eq:Grammian}
\end{equation}
where $\lambda_1',\ldots,\lambda_n'$  is any basis of $\Lambda'$.

\subsection{The tropical Jacobian and the Abel--Jacobi map} Let $\Ga$ be a metric graph, and choose an oriented model $(G,\ell)$. The simplicial chain group $C_1(G,\ZZ)$ is the free abelian group on $E(G)$ and contains the simplicial homology group $H_1(G,\ZZ)$ which is of rank $g(\Ga)$. The chain groups naturally fit into a directed system with respect to refinements of models, and the images of the homology groups agree. We denote the direct limits $C_1(\Ga,\ZZ)$ and $H_1(\Ga,\ZZ)$, and refer the reader to~\cite{2011BakerFaber} for details. 

It is conceptually convenient to introduce the group $\Omega^1_{\Ga}$ of \emph{tropical 1-forms} on $\Ga$. This group is canonically isomorphic to $H_1(\Ga,\ZZ)$, and a cycle $\gamma=\sum \gamma(e)\cdot e\in H_1(\Ga,\ZZ)$ is written $\sum \gamma(e)de$ when it is viewed as a $1$-form. There is a natural \emph{integration pairing}
\begin{equation}
    [\cdot, \cdot] : \Omega^1_\Ga\times C_1(\Gamma, \ZZ)   \longrightarrow \RR, \qquad
    \left[\sum \gamma(e)de,\sum \delta(e) \cdot e\right] = \int_\gamma \omega=\sum \gamma(e)\delta(e)\ell(e),
    \label{eq:integrationpairing}
\end{equation}
which restricts to a perfect pairing $\Omega^1_\Ga\times H_1(\Gamma, \ZZ) \to \RR$. The \emph{Jacobian variety} of $\Ga$ is the $g(\Ga)$-dimensional pptav
\[
\Jac(\Gamma) = \big(\Omega^1_\Ga,\allowbreak \,H_1(\Gamma, \ZZ),\allowbreak \,[\cdot, \cdot]\big) =\Hom(\Om^1_{\Ga},\RR)\,/\,H_1(\Ga,\ZZ),
\]
where the principal polarization is the trivial isomorphism $H_1(\Gamma, \ZZ)=\Omega^1_\Ga$.

\begin{remark} We note that the integration pairing~\eqref{eq:integrationpairing}, and hence the induced inner product on the universal cover $\Hom(\Om^1_{\Ga},\RR)$ of $\Jac(\Ga)$, has a physical peculiarity: it is linear, rather than quadratic, in the edge lengths $\ell(e)$. Hence the units that are used to measure edge lengths on $\Ga$ become units of area from the point of view of the Riemannian geometry of $\Jac(\Ga)$.
    \label{rem:dimension}
\end{remark}

Let $q\in \Ga$ be a base point, and for $p\in \Ga$ let $\ga_p\in C_1(\Ga,\ZZ)$ denote any path from $q$ to $p$. The \emph{Abel--Jacobi map} $\phi_q:\Ga\to \Jac(\Ga)$ relative to $q$ is given by
\[
p\longmapsto \left(\omega\longmapsto \int_{\ga_p} \omega \right).
\]
The map $\phi_q$ extends to symmetric powers of $\Ga$ and thus to divisors. The map respects linear equivalence, and hence induces a map $\Pic_0(\Ga)\to \Jac(\Ga)$ that does not depend on the choice of $q$. The tropical Abel--Jacobi theorem (see Theorem 6.3 in~\cite{2008MikhalkinZharkov}) states that this map is an isomorphism. Under this identification, the Abel--Jacobi map sends $p$ to the divisor class $p-q$. 

\subsection{The tropical Prym and the Abel--Prym map} \label{subsec:Prym}  Let $\phi:\tGa\to \Ga$ be a harmonic morphism of metric graphs. We consider the natural pushforward and pullback homomorphisms 
\begin{align*}
    \phi^*=\Nm^\# :\Omega^1_\Ga &\longrightarrow \Omega^1_{\tGa}, & \phi_*=\Nm_\#:H_1(\tGa,\ZZ) &\longrightarrow H_1(\Ga,\ZZ), \\
    \sum_{e\in E(\Ga)} \gamma(e)de &\longmapsto \sum_{e\in E(\Ga)}\gamma(e)\sum_{\te\in \phi^{-1}(e)} d_{\phi}(\te) \, d\te, & \sum_{\te\in E(\tGa)} \gamma(\te)\cdot\te &\longmapsto \sum_{\te\in E(\tGa)} \gamma(\te)\cdot\phi(\te).
\end{align*}
The pair $\Nm=(\Nm^\#,\Nm_\#)=(\phi^*,\phi_*)$ defines the surjective \emph{norm homomorphism} $\Nm:\Jac(\tGa)\to \Jac(\Ga)$ of pptavs, which corresponds, under the identification of $\Jac$ with $\Pic_0$, to the direct image homomorphism
\[
\Nm:\Pic_0(\tGa)\longrightarrow \Pic_0(\Ga),\qquad \sum_{\tx\in \tGa}a_{\tx}\, \tx \longmapsto \sum_{\tx\in \tGa}a_{\tx}\, \phi(\tx).
\]
We now consider a double cover $\pi:\tGa\to \Ga$ of metric graphs of genera $\tg=g(\tGa)$ and $g=g(\Ga)$, respectively. Let
\[
\pi_*:H_1(\tGa,\ZZ)\longrightarrow H_1(\Ga,\ZZ),\qquad \pi^*:\Om^1_{\Ga}\longrightarrow \Om^1_{\tGa}
\]
denote the pushforward and pullback homomorphisms, denote
\[
K=(\Coker \pi^*)^{\torf},\qquad K'=\Ker \pi_*,
\]
and denote $[\cdot,\cdot]_K:K\times K'\to \RR$ the pairing induced by the integration pairing on $\Jac(\tGa)$. 
\begin{definition} The \emph{tropical Prym variety} $\Prym(\tGa/\Ga)$ of the double cover $\pi:\tGa\to \Ga$ is the kernel torus of the norm homomorphism:
\[
\Prym(\tGa/\Ga)=(\Ker \Nm)_0=\left(K,K',[\cdot,\cdot]_K\right)=\frac{\Hom(\Coker \pi^*,\RR)}{\Ker \pi_*}.
\]    
\end{definition}

The tropical Prym variety $\Prym(\tGa/\Ga)$ has dimension $h=\tg-g$ and is the connected component of the identity of the kernel $\Ker \Nm$ of the norm homomorphism $\Nm:\Jac(\tGa)\to \Jac(\Ga)$. The structure of the kernel is described by Proposition 6.1 in~\cite{2018JensenLen}. Let $i:\tGa\to \tGa$ denote the involution associated to $\pi$, then any element in $\Ker \Nm$ has a representative (viewed as an element of $\Pic_0(\tGa)$) of the form $E-i(E)$, where $E$ is an effective divisor $E$ on $\tGa$. If $\pi$ is a free double cover, then the parity of $\deg E$ does not depend on the choice of $E$, and $\Ker \Nm$ has two connected components distinguished by the parity. If $\pi$ is dilated, however, then $\Ker \Nm$ is connected.

The tropical Prym variety $\Prym(\tGa/\Ga)$ has a polarization induced from the principal polarization on $\Jac(\tGa)$. Its type was computed in Theorem 1.5.7 of~\cite{2021LenUlirsch} and Proposition 4.21 of~\cite{2022RoehrleZakharov} and is $(1,\ldots,1,2,\ldots,2)$ with $d-1$ many $1$'s, where $d$ is the dilation index of the double cover. For the most part of this article we will work with the principalization of the Prym variety, which we denote by $\Prym_p(\tGa / \Ga)$ and refer to as \emph{Prym variety} as well. For later reference, we spell out the defining data of $\Prym_p(\tGa / \Ga)$ and introduce the following notation. Any element of $\Ker \pi_*$ is contained in the $\ZZ$-span of elements $\tilde e^+ - \tilde e^-$ for $e$ ranging over the undilated edges of $\Gamma$. Hence we can view elements $\gamma \in \Ker \pi_*$ as \emph{chains} on $\Ga$ and write
\begin{equation}
\label{eq:chainsontarget}
\gamma=\sum_{e\in E(G)}\gamma(e)\cdot(\te^+-\te^-)= \sum_{e\in E(G)}\gamma(e)\cdot e\in C_1(G,\ZZ).
\end{equation}
Then $\Prym_p(\tGa/\Ga)$ is the pptav defined by the triple $(\Ker \pi_*,\Ker \pi_*,[\cdot,\cdot])$, where the pairing  is induced from $H_1(\tGa,\ZZ)$ and is given by
\begin{equation} \label{eq:pairingPrym}
    [\gamma, \gamma'] = \sum_{\tilde e \in E(\tGa)} \gamma(\tilde e) \gamma'(\tilde e) \ell(\tilde e) = \sum_{e \in E_{\ud}(\Ga)} 2\gamma(e) \gamma'(e) \ell(e) 
    \end{equation}
for all $\gamma, \gamma' \in \Ker\pi_*$, where the second equality uses $\ell(\tilde e^+) = \ell(\tilde e^-)=\ell(e)$.

We also define the tropical Abel--Prym map. Let $q\in \tGa$ be a base point, and for $p\in \tGa$ let $\ga_p\in C_1(\tGa,\ZZ)$ denote any path from $q$ to $p$. Let $i_*\ga_p$ denote the image of this path under the involution. The \emph{Abel--Prym map} $\psi_q:\tGa\to \Prym(\tGa / \Ga)$ is given by
\[
p\longmapsto \left(\omega\longmapsto \int_{\ga_p}\omega-\int_{i_*\ga_p}\omega\right).
\]
We note that $\psi_q=p-i(p)-(q-i(q))$ when viewed as an element of $\Pic_0(\tGa)$, hence lies in the connected component of the identity of $\Ker \Nm$ (which is not true of $p-i(p)$ if the double cover is free).

\subsection{Volumes of the Jacobian and the Prym} \label{subsec:volume}
One of the motivations for this paper are the formulas for the volume of the tropical Jacobian $\Jac(\Ga)$ found in~\cite{2014AnBakerKuperbergShokrieh} and the tropical Prym variety $\Prym(\tGa/\Ga)$ found in~\cite{2022LenZakharov} and~\cite{2023GhoshZakharov}. In both cases, the square of the volume is a polynomial in the lengths of the edges of $\Ga$. We shall see in the next section that the monomials are in fact indexed by the bases of an appropriate matroid on the set of edges of $\Ga$.

We recall the results of~\cite{2014AnBakerKuperbergShokrieh},~\cite{2022LenZakharov} and~\cite{2023GhoshZakharov}.

\begin{theorem}[Theorem 5.2 of~\cite{2014AnBakerKuperbergShokrieh}] Let $\Ga$ be a metric graph of genus $g$ with model $(G,\ell)$. The volume of the Jacobian of $\Ga$ is given by
\begin{equation}
\Vol^2 \big(\Jac(\Ga) \big) = \sum_{F\subset E(G)}\prod_{e\in F} \ell(e),
\label{eq:Jacvolume}
\end{equation}
where the sum is taken over all $g$-element subsets $F\subset E(G)$ such that $G\backslash F$ is a tree.    
\end{theorem}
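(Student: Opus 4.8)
The plan is to reduce the statement to the Cauchy--Binet formula together with a combinatorial identification of the nonvanishing maximal minors, essentially the weighted matrix-tree (Kirchhoff) theorem. Since the principal polarization on $\Jac(\Ga)$ is the tautological identification $H_1(\Ga,\ZZ)=\Om^1_{\Ga}$, the induced inner product on $H_1(\Ga,\ZZ)$ is exactly the integration pairing~\eqref{eq:integrationpairing}, and formula~\eqref{eq:Grammian} computes $\Vol^2(\Jac(\Ga))$ as the Grammian of any integral basis $\ga_1,\dots,\ga_g$ of $H_1(G,\ZZ)$. First I would record this linear-algebraically: let $A$ be the $|E(G)|\times g$ integer matrix whose columns are the coordinate vectors of the $\ga_i$ in the edge basis of $C_1(G,\ZZ)$, so that $A_{e,i}=\ga_i(e)$, and let $D=\operatorname{diag}(\ell(e))_{e\in E(G)}$. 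Then the Grammian is $A^{\mathsf T}DA$ and
\[
\Vol^2\big(\Jac(\Ga)\big)=\det\big(A^{\mathsf T}DA\big).
\]

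Next I would apply the Cauchy--Binet formula to the factorization $A^{\mathsf T}(DA)$. This expands the determinant as a sum over $g$-element subsets $S\subset E(G)$,
\[
\det\big(A^{\mathsf T}DA\big)=\sum_{\substack{S\subset E(G)\\ |S|=g}}(\det A_S)^2\prod_{e\in S}\ell(e),
\]
where $A_S$ is the $g\times g$ submatrix of $A$ on the rows indexed by $S$, and the diagonal factor $\prod_{e\in S}\ell(e)$ is $\det D_S$. Thus the theorem reduces to the claim that $(\det A_S)^2=1$ when $G\setminus S$ is a (necessarily spanning) tree and $\det A_S=0$ otherwise; note that $|S|=g=|E(G)|-(|V(G)|-1)$ is exactly the number of edges lying outside a spanning tree.

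The heart of the argument, and the step I expect to be the main obstacle, is this combinatorial lemma. For the vanishing, observe that $\det A_S\neq 0$ iff the rows of $A$ indexed by $S$ are linearly independent, i.e.\ iff the coordinate projection $\pi_S\colon H_1(G,\RR)\to \RR^S$ is injective. Its kernel consists of the cycles supported on $E(G)\setminus S$, namely $H_1(G\setminus S,\RR)$, so $\pi_S$ is injective exactly when $G\setminus S$ contains no cycle; as $G\setminus S$ then has $|V(G)|-1$ edges and no cycle, it is a spanning tree. This gives $\det A_S=0$ unless $G\setminus S$ is a tree. For the value in the nonvanishing case I would use that $(\det A_S)^2$ is independent of the chosen integral basis of $H_1(G,\ZZ)$, since any change of integral basis has determinant $\pm 1$, and then evaluate it in the basis of fundamental cycles $\{\ga_e\}_{e\in S}$ associated to the spanning tree $T=G\setminus S$: each $\ga_e$ is the unique cycle contained in $T\cup\{e\}$, normalized so that $\ga_e(e)=1$, and $\ga_e(e')=0$ for every other cotree edge $e'\in S$. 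In this basis $A_S$ is the identity matrix, so $\det A_S=\pm 1$ and hence $(\det A_S)^2=1$ for every $S$ with $G\setminus S$ a tree. Substituting into the Cauchy--Binet expansion yields~\eqref{eq:Jacvolume}. The only genuinely nontrivial point is the unimodularity of the surviving minors, which is precisely where the graph-theoretic structure, namely the total unimodularity of the cycle space made concrete through fundamental cycles, enters.
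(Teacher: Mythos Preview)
Your argument is correct. Note, however, that the paper does not give its own proof of this statement: it is quoted as Theorem~5.2 of~\cite{2014AnBakerKuperbergShokrieh} and used as background, so there is no proof in the paper to compare against. Your approach --- writing $\Vol^2(\Jac(\Ga))=\det(A^{\mathsf T}DA)$, expanding by Cauchy--Binet, and identifying the nonvanishing minors via fundamental cycles of a spanning tree --- is exactly the standard derivation of the weighted Kirchhoff formula, and indeed is the argument given in~\cite{2014AnBakerKuperbergShokrieh}.
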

The dimensional peculiarity noted in Remark~\ref{rem:dimension} makes its appearance: the square of the volume of $\Jac(\Ga)$ (which is a Riemannian manifold of dimension $g$) is a polynomial in the edge lengths of degree $g$, not the expected $2g$. The volume of the tropical Prym variety of a free and dilated double cover was computed in~\cite{2022LenZakharov} and~\cite{2023GhoshZakharov}, respectively.

\begin{theorem}[Theorem 3.4 of~\cite{2022LenZakharov} and Theorem 3.3 of~\cite{2023GhoshZakharov}] Let $\pi:\tGa\to \Ga$ be a double cover of metric graphs of genera $\tg$ and $g$, respectively, and let $h=\tg-g$. The volume of the Prym variety of $\pi$ is given by
\begin{equation}
\Vol^2 \big(\Prym(\tGa/\Ga) \big) = 2^{1-d(\tGa/\Ga)}\sum_{F\subset E_{\ud}(G)}4^{\ind(F)-1}\prod_{e\in F} \ell(e).
\label{eq:Prymvolume}
\end{equation}
Here the sum is taken over all $h$-element subsets of undilated edges $F\subset E_{\ud}(G)$ called ogods (see Definition~\ref{def:ogod}), the index $\ind(F)$ of an ogod $F$ is the number of connected components of $G\backslash F$, and $d(\tGa/\Ga)$ is the dilation index.
    
\end{theorem}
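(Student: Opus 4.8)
The plan is to read the squared volume off the Grammian formula~\eqref{eq:Grammian}, convert the resulting determinant into a sum over edge subsets by Cauchy--Binet, and then identify the surviving terms and evaluate their coefficients combinatorially. Concretely, by~\eqref{eq:Grammian} the quantity $\Vol^2\big(\Prym(\tGa/\Ga)\big)$ is the determinant of the Gram matrix $\big([\gamma_i,\gamma_j]\big)_{i,j}$ of the Prym pairing on any $\ZZ$-basis $\gamma_1,\dots,\gamma_h$ of $\Ker\pi_*$. I regard each $\gamma_i$ as an integer chain on $\Ga$ via~\eqref{eq:chainsontarget} and collect these into the basis matrix $B=\big(\gamma_i(e)\big)_{i,\,e}\in\ZZ^{h\times E_{\ud}(G)}$. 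In the normalization of the cited references the inner product on $\Ker\pi_*$, viewed as a lattice of $\Ga$-chains, is $\sum_{e}\gamma(e)\gamma'(e)\ell(e)$, i.e.\ half the restriction~\eqref{eq:pairingPrym} of the pairing from $\tGa$; since this form is diagonal in the edge coordinates, the Gram matrix factors as $B\,D\,B^{T}$ with $D=\operatorname{diag}\big(\ell(e)\big)_{e\in E_{\ud}(G)}$.

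Applying the Cauchy--Binet formula to $\det\!\big(B\,D\,B^{T}\big)$ then gives
\[
\Vol^2\big(\Prym(\tGa/\Ga)\big)=\sum_{\substack{F\subseteq E_{\ud}(G)\\ |F|=h}}\det(B_F)^2\prod_{e\in F}\ell(e),
\]
where $B_F$ is the $h\times h$ submatrix on the columns indexed by $F$. This already displays the squared volume as a polynomial in the edge lengths whose monomials are indexed by $h$-element sets of undilated edges, matching the shape of~\eqref{eq:Prymvolume}; it remains to determine which $F$ contribute and with what coefficient.

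A subset $F$ contributes precisely when $\det(B_F)\neq0$, that is, when the columns of $B$ indexed by $F$ form a basis of the matroid represented by $B$. I would identify this matroid with $M^{*}(\tGa/\Ga)$, so that the contributing $F$ are exactly the ogods of Definition~\ref{def:ogod}; equivalently, $\det(B_F)\neq0$ iff $G\setminus F$ splits into genus-one pieces each carrying an odd (unbalanced) cycle. The crux is then the evaluation of the nonzero minors. Here I would use that $|\det(B_F)|$ is the index $\big[\ZZ^{F}:\pr_F(\Ker\pi_*)\big]$ of the projection of the anti-invariant cycle lattice onto the $F$-coordinates, and argue from the signed-graph structure of the cover that each odd genus-one component of $G\setminus F$ contributes exactly one factor of $2$ to this index, subject to a single global relation among the components. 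This produces $\det(B_F)^2=4^{\ind(F)-1}$ and hence the summand $4^{\ind(F)-1}\prod_{e\in F}\ell(e)$ in the free case, where $d(\tGa/\Ga)=1$ and the prefactor is trivial.

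The main obstacle is this last minor computation, and in particular the dilated case, which accounts for the global factor $2^{1-d(\tGa/\Ga)}$. For a dilated cover I would reduce to the free case through the resolution $\pi_{\free}:\tGa_{\free}\to\Ga_{\free}$ of Example~\ref{ex:resolution}: adjoining a loop at each of the $d$ dilated components yields a free cover, and comparing $\Ker\pi_*$ with the anti-invariant cycle lattice of $\pi_{\free}$ --- equivalently, measuring the failure of the $d$ dilated components to contribute independent classes, which is reflected in the connectedness of $\Ker\Nm$ and the polarization type $(1^{\,d-1},2^{\,h-d+1})$ --- introduces a global lattice-index discrepancy of $2^{\,d-1}$. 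Carried through the volume, this discrepancy becomes the prefactor $2^{1-d(\tGa/\Ga)}$. Combining the Cauchy--Binet expansion, the identification of the support with ogods, the per-component factor of $2$ in the minors, and this global dilation correction then yields~\eqref{eq:Prymvolume}.
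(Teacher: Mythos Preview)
The paper does not actually prove this theorem: it is quoted verbatim as Theorem~3.4 of~\cite{2022LenZakharov} (free case) and Theorem~3.3 of~\cite{2023GhoshZakharov} (dilated case), and is recalled in Section~\ref{subsec:volume} only as motivation for the matroidal picture developed later. So there is no ``paper's own proof'' to compare against.

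That said, your outline is precisely the strategy of the cited references. Both papers write the squared volume as the Gram determinant on a basis of $\Ker\pi_*$, factor the Gram matrix as $B\,D\,B^T$ with $D$ the diagonal edge-length matrix, and apply Cauchy--Binet to obtain a sum over $h$-element edge subsets; the nonvanishing of $\det(B_F)$ is then identified with $F$ being an ogod, and the minor $|\det(B_F)|$ is computed componentwise to give the power of~$2$ depending on $\ind(F)$. Your reduction of the dilated case to the free one via the resolution $\pi_{\free}$ of Example~\ref{ex:resolution}, together with the lattice-index discrepancy responsible for the global factor $2^{1-d(\tGa/\Ga)}$, is exactly how~\cite{2023GhoshZakharov} handles it. One point worth flagging: your remark that the cited references use the pairing $\sum_e\gamma(e)\gamma'(e)\ell(e)$ on $\Ker\pi_*$---half of the restriction~\eqref{eq:pairingPrym} from $\Jac(\tGa)$---is essential to make the constants come out as stated; it reflects the fact that in the free case the natural principal polarization on $\Prym$ is half the induced one, and you should make this normalization check explicit rather than just asserting it.
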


\section{The matroid of a double cover}
\label{sec:matroid}

The papers~\cite{2010CaporasoViviani} and~\cite{2011BrannettiMeloViviani} studied the tropical Torelli map that associates to a metric graph $\Ga$ its Jacobian $\Jac(\Ga)$, and showed that its fibers are most conveniently understood in terms of a combinatorial object, the \emph{graphic matroid} $M(\Ga)$ (we note that many of these results earlier appeared in German and without the use of the tropical language in~\cite{Gerritzen}). 
In this section, we describe an analogous object, the \emph{signed graphic matroid} $M(\tGa/\Ga)$ of a double cover $\pi:\tGa\to \Ga$, which plays a role in describing the fibers of the tropical Prym--Torelli map $(\tGa\to \Ga)\mapsto\Prym(\tGa/\Ga)$ in Section~\ref{sec:fibers}. This matroid was originally defined by Zaslavsky~\cite{1982Zaslavsky} in the framework of signed graphs, and we translate his definition into the language of double covers. We begin with the basic definitions.


\subsection{Matroids and graph theory}

A \emph{matroid} $M$ consists of a finite set $E(M)$, called the \emph{ground set} of $M$, and a family of subsets $\calI(M)$ of $E(M)$, called the \emph{independent sets}, satisfying the following axioms:

\begin{enumerate}
    \item $\emptyset \in \calI(M)$.

    \item If $A\subset B$ and $B\in \calI(M)$, then $A\in \calI(M)$.

    \item If $A,B\in \calI(M)$ and $A$ has more elements than $B$, then there exists $x\in A\backslash B$ such that $B\cup \{x\}\in \calI(M)$.
\end{enumerate}
The maximal elements of $\calI(M)$ are the \emph{bases} of the matroid, and the set of bases is denoted $\calB(M)$. All bases have the same cardinality, called the \emph{rank} of $M$. A subset of $E(M)$ that is not independent is called \emph{dependent}, and the set of dependent sets is denoted $\calD(M)$. A minimal dependent set is a \emph{circuit}, and the set of circuits is denoted $\calC(M)$. Each of the sets $\calI(M)$, $\calB(M)$, $\calD(M)$, and $\calC(M)$ determines the others and satisfies a set of properties that can be used to give an equivalent definition of a matroid (see Chapter~1 in~\cite{2011Oxley}). 

Starting with a matroid $M$, we construct several related matroids. The \emph{dual matroid} $M^*$ has the same ground set $E(M)$, and $B^*\subset E(M)=E(M^*)$ is a basis of $M^*$ if and only if $E(M)\backslash B^*$ is a basis of $M$. Given a subset $X\subset E(M)$, the \emph{restriction} of $M$ to $X$ is the matroid $M|_X$ with ground set $X$ and independent set $\calI(M|_X)=\{I\in \calI(M):I\subset X\}$. The \emph{deletion} of $X$ from $M$ is the restriction $M\backslash X=M|_{E(M)-X}$ to the complement. Finally, the \emph{contraction} of $M$ along $X$ is the matroid $M/X=(M^*\backslash X)^*$.

An \emph{$n$-circuit} of a matroid $M$ is a circuit with $n$ elements (a $1$-circuit is sometimes called a \emph{loop}, but we avoid this terminology). Two elements $e,f\in E(M)$ are \emph{parallel} if $\{e,f\}$ is a $2$-circuit, and a \emph{parallel class} of $M$ is a maximal subset of $F\subset E(M)$ containing no $1$-circuits and such that any $2$-element subset is a $2$-circuit. A parallel class is \emph{trivial} if it consists of a single element, and a matroid is called \emph{simple} if it has no non-trivial parallel classes. Given a matroid $M$, the \emph{simplification} of $M$ is the simple matroid $M_{\simp}$ constructed by deleting all $1$-circuits, and deleting all but one element from each parallel class. The simplification procedure is non-canonical, since it involves choosing a representative from each parallel class, but any two simplifications are isomorphic as matroids.

\begin{example} \label{ex:graphicmatroid} Let $G$ be a connected graph of genus $g$. The \emph{graphic matroid} $M(G)$ has ground set $E(G)$, and a subset $F\subset E(G)$ is independent if the subgraph $G[F]$ generated by $F$ is a forest (equivalently, if the homology group $H_1\big(G[F],\ZZ \big)$ is trivial). The bases of $M(G)$ are the spanning trees of $G$, the dependent sets are subgraphs having a nontrivial cycle, and the circuits are subgraphs consisting of a simple cycle. Given a set of edges $F\subset E(G)$, the deletion $M(G)\backslash F$ and contraction $M(G)/F$ are the graphic matroids of the graphs $G\backslash F$ and $G_F$, respectively. A $1$-circuit of $M(G)$ is a loop of $G$, and a parallel class is the set of edges between two vertices. The matroid $M(G)$ is simple if and only if the graph $G$ is simple, and $M(G)_{\simp}$ is the graphic matroid of the simplification of the graph $G$, obtained by deleting all loops, and deleting all but one edge in each set of multiedges between any pair of vertices. 


The dual matroid $M^*(G)$ is called the \emph{cographic} or \emph{bond matroid}. The independent sets of $M^*(G)$ are the sets of edges $F\subset E(G)$ whose removal does not disconnect $G$ (equivalently, such that the reduced homology group $\widetilde{H}_0(G\backslash F, \ZZ)$ is trivial). The bases of $M^*(G)$ are the complements of the spanning trees of $G$, and the rank of $M^*(G)$ is equal to $g$. A $1$-circuit of $M^*(G)$ is a bridge edge. A parallel class consists of a set of $n$ edges whose removal disconnects $G$ into $n$ connected components that are cyclically linked in $G$ by the removed edges. The simplification of $M^*(G)$ is obtained by contracting the bridge edges and all but one edge from each parallel class (since $M^*(G)$ is the dual, matroid-theoretic deletion is graph-theoretic contraction).

\end{example}

\subsection{The matroid of a double cover} In the seminal paper~\cite{1982Zaslavsky}, Zaslavsky introduces the \emph{signed graphic matroid} $M(G,\sigma)$ on the edge set $E(G)$ of a graph $G$ equipped with an edge parity assignment $\sigma:E(G)\to \{\pm 1\}$. As described in Section~\ref{subsec:doublecovers}, the pair $(G,\sigma)$ determines a free double cover $p:\tG\to G$, and Zaslavsky shows that the matroid $M(G,\sigma)$ is switching-equivalent, in other words depends only on the cover $p$ (Corollary 5.4 in~\cite{1982Zaslavsky}). He then considers the contraction of the matroid $M(G,\sigma)$ along a subset $F\subset E(G)$. Since the contracted double cover $p_F:\tG_{\tF}\to G_F$ may no longer be free, in order to stay within the framework of signed graphs, Zaslavsky is forced to introduce a more complicated category of graphs, having \emph{half-arcs} and \emph{free loops} in addition to ordinary edges (see Proposition~\ref{prop:Prymmatroid} for details).

In this section, we give an elementary geometric description of the signed graphic matroid and its dual in terms of double covers, which works equally well in the free and dilated cases. In addition, the description via double covers allows us to define an auxiliary function on the independent sets called the \emph{index} (see Definition~\ref{def:index}), which is crucial for reconstructing the Prym variety and which cannot be seen from Zaslavsky's approach. To motivate our definition, we recall that a set of edges of a graph $G$ is independent for the cographic matroid $M^*(G)$ if removing it does not disconnect $G$. We give an analogous definition for double covers.

\begin{definition} Let $p:\tG\to G$ be a double cover of graphs, with $G$ possibly disconnected. We say that $p$ is \emph{relatively connected} if the preimage of each connected component of $G$ is connected in $\tG$.  
\end{definition}

A double cover $p:\tG\to G$ of a connected graph $G$ is relatively connected if and only if it is not the trivial free double cover. If $e\in E(G)$ is a dilated edge, then $G\backslash \{e\}$ may or may not be connected, but each connected component has a dilated vertex, and hence the double cover obtained from $p$ by deleting $e$ from $G$ and its preimage from $\tG$ is still relatively connected. Removing an undilated edge, however, may relatively disconnect $p$. Hence we make the following definition.

\begin{definition} \label{def:Prymmatroid} Let $p:\tG\to G$ be a nontrivial double cover of a connected graph $G$. We define the matroid $M^*(\tG/G)$ of $p$ as follows: 
\begin{enumerate}
    \item The ground set of $M^*(\tG/G)$ is $E_{\ud}(G)=E(G)\backslash E(G_{\dil})$, the set of undilated edges of $G$.
    \item \label{item:indepdef} A set of undilated edges $F\subset E_{\ud}(G)$ is independent for the matroid $M^*(\tG/G)$ if the double cover $p|_{\tG\backslash p^{-1}(F)}:\tG\backslash p^{-1}(F)\to G\backslash F$ obtained by removing these edges from $G$ and their preimages from $\tG$ is relatively connected. Equivalently, $F$ is independent if the double cover $p|_{\tG\backslash p^{-1}(F)}$ induces an injective map on the reduced homology groups $\widetilde{H}_0 \big(\tG\backslash p^{-1}(F), \ZZ \big)\to \widetilde{H}_0(G\backslash F, \ZZ)$.
\end{enumerate}

\end{definition}

 We show in Proposition~\ref{prop:Prymmatroid} that $M^*(\tG/G)$ is a matroid, by identifying it with the signed graphic matroid of Zaslavsky. We first investigate when it is possible to remove a single undilated edge without relatively disconnecting the double cover. To simplify notation, given a double cover $p:\tG\to G$ and an undilated edge $e\in E_\ud(G)$, we denote $G^e=G\backslash\{e\}$, $\tG^e=\tG\backslash p^{-1}(e)$, and $p^e:\tG^e\to G^e$ the restriction of $p$. We first determine the relatively connected double covers that do not admit the removal of an edge.

\begin{definition} \label{def:elementarydouble}

An \emph{elementary double cover} $p:\tG\to G$ is one of the following two types (see Figure~\ref{fig:elementary_dc}):
\begin{enumerate}
    \item $p$ is dilated, $G$ and $G_{\dil}$ are connected, and $g(G_{\dil})=g(G)$.
    \item $p$ is free, $G$ is connected of genus one, and $p$ is not the trivial free double cover.
    
\end{enumerate}

\end{definition}

\begin{figure}
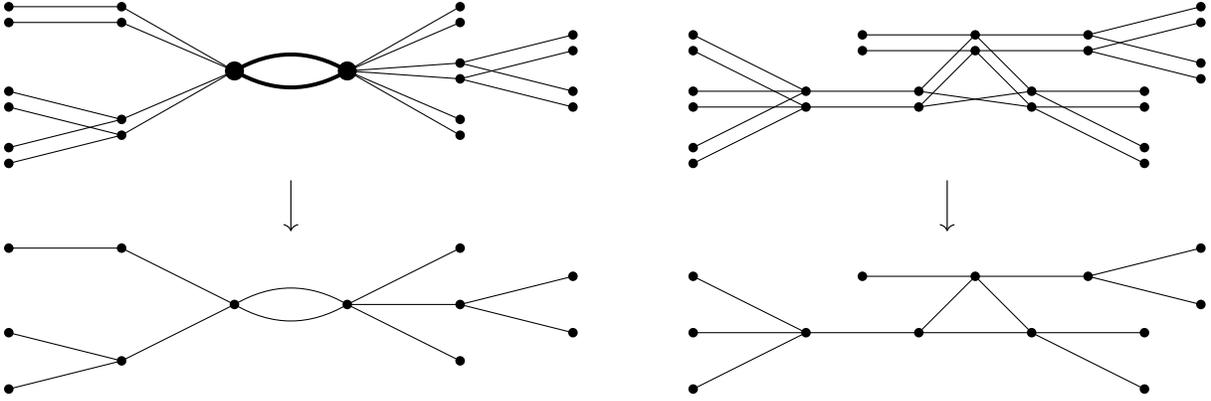

	\centering
	\doublecover{
	\foreach \i in {-0.07, 0.07}{			
		\vertex{-2}{0.5 + \i}
		\vertex{-2}{-0.25 + \i}
		\vertex{-2}{-0.75 + \i}
		\vertex{-1}{0.5 + \i}
		\vertex{-1}{-0.5 + \i}
		\vertex{2}{0.5 + \i}
		\vertex{2}{0 + \i}
		\vertex{2}{-0.5 + \i}
		\vertex{3}{0.25 + \i}
		\vertex{3}{-0.25 + \i}
		
		\path[draw] (-2, 0.5 + \i) -- (-1, 0.5 + \i) -- (0, 0) -- (-1, -0.5 + \i) -- (-2, -0.25 + \i);
		\path[draw] (-1, -0.5 + \i) -- (-2, -0.75 + \i);
		\path[draw] (1, 0) -- (2, 0.5 + \i);
		\path[draw] (1, 0) -- (2, 0 + \i) -- (3, 0.25 + \i);
		\path[draw] (1, 0) -- (2, -0.5 + \i);
		\path[draw] (2, 0 + \i) -- (3, -0.25 + \i);	
	}
    \vertex[2]{0}{0}
    \vertex[2]{1}{0}
    \path[draw, line width = 1.5pt] (0, 0) to[bend left = 30] (1,0);
    \path[draw, line width = 1.5pt] (0, 0) to[bend right = 30] (1,0);
}{
	\vertex{-2}{0.5}
	\vertex{-2}{-0.25}
	\vertex{-2}{-0.75}
	\vertex{-1}{0.5}
	\vertex{-1}{-0.5}
	\vertex{0}{0}
	\vertex{1}{0}
	\vertex{2}{0.5}
	\vertex{2}{0}
	\vertex{2}{-0.5}
	\vertex{3}{0.25}
	\vertex{3}{-0.25}
	
	\path[draw] (-2, 0.5) -- (-1, 0.5) -- (0, 0) -- (-1, -0.5) -- (-2, -0.25);
	\path[draw] (-1, -0.5) -- (-2, -0.75);
	\path[draw] (0, 0) to[bend left = 30] (1, 0) -- (2, 0.5);
	\path[draw] (0, 0) to[bend right = 30] (1, 0) -- (2, 0) -- (3, 0.25);
	\path[draw] (1, 0) -- (2, -0.5);
	\path[draw] (2, 0) -- (3, -0.25);
}%
\hspace{1cm}%
\doublecover{
	\foreach \i in {-0.07, 0.07}{
		\vertex{-2}{0.5 + \i}
		\vertex{-2}{0 + \i}
		\vertex{-2}{-0.5 + \i}
		\vertex{-1}{0 + \i}
		\vertex{-0.5}{0.5 + \i}
		\vertex{0}{0 + \i}
		\vertex{0.5}{0.5 + \i}
		\vertex{1}{0 + \i}
		\vertex{1.5}{0.5 + \i}
		\vertex{2}{0 + \i}
		\vertex{2}{-0.5 + \i}
		\vertex{2.5}{0.25 + \i}
		\vertex{2.5}{0.75 + \i}
		
		\path[draw] (-2,0 + \i) -- (0, 0 + \i);
		\path[draw] (1, 0 + \i) -- (2, 0 + \i);
		\path[draw] (-2, 0.5 + \i) -- (-1, 0 + \i) -- (-2, -0.5 + \i);
		\path[draw] (0, 0 + \i) -- (0.5, 0.5 + \i) -- (1, 0 + \i) -- (2, -0.5 + \i);
		\path[draw] (-0.5, 0.5 + \i) -- (1.5, 0.5 + \i) -- (2.5, 0.75 + \i);
		\path[draw] (1.5, 0.5 + \i) -- (2.5, 0.25 + \i);
	}
	\path[draw] (0, -0.07) -- (1, 0.07);
	\path[draw] (0, 0.07) -- (1, -0.07);
}{
	\vertex{-2}{0.5}
	\vertex{-2}{0}
	\vertex{-2}{-0.5}
	\vertex{-1}{0}
	\vertex{-0.5}{0.5}
	\vertex{0}{0}
	\vertex{0.5}{0.5}
	\vertex{1}{0}
	\vertex{1.5}{0.5}
	\vertex{2}{0}
	\vertex{2}{-0.5}
	\vertex{2.5}{0.25}
	\vertex{2.5}{0.75}
	
	\path[draw] (-2,0) -- (0, 0) -- (1, 0) -- (2, 0);
	\path[draw] (-2, 0.5) -- (-1, 0) -- (-2, -0.5);
	\path[draw] (0, 0) -- (0.5, 0.5) -- (1, 0) -- (2, -0.5);
	\path[draw] (-0.5, 0.5) -- (1.5, 0.5) -- (2.5, 0.75);
	\path[draw] (1.5, 0.5) -- (2.5, 0.25);
}
	\caption{Elementary double covers of the first and second types.}
	\label{fig:elementary_dc}
\end{figure}

\begin{lemma} \label{lemma:elementarydoublecovers} Let $p:\tG\to G$ be an elementary double cover. Then $p$ is relatively connected, but for any undilated edge $e\in E(G)$ the double cover $p^e:\tG^e\to G^e$ is not relatively connected.
    
\end{lemma}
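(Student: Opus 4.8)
The plan is to treat the two kinds of elementary cover from Definition~\ref{def:elementarydouble} separately, relying throughout on one elementary fact: a free double cover of a tree (or forest) is trivial, hence its total space is disconnected --- equivalently $H^1(\text{tree},\ZZ/2\ZZ)=0$. Relative connectedness of $p$ will come from propagating connectedness outward along a suitable tree, while the failure of relative connectedness after deleting an edge will come from exhibiting a tree subgraph over which the (free) cover restricts trivially.

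For a cover of the first type, I first show $\tG$ is connected. With $G$ and $G_{\dil}$ connected, the hypothesis $g(G_{\dil})=g(G)$ is equivalent, by an Euler-characteristic count, to the statement that contracting $G_{\dil}$ to a point produces a connected genus-zero graph, i.e.\ a tree $T=G/G_{\dil}$ whose edges are in bijection with the undilated edges of $G$; in particular no undilated edge has both endpoints in $G_{\dil}$. Since $p^{-1}(G_{\dil})\cong G_{\dil}$ is connected, it suffices to connect every preimage vertex to $p^{-1}(G_{\dil})$. Rooting $T$ at the image of $G_{\dil}$, I induct on distance from the root. A vertex $w$ adjacent to the root is joined to $G_{\dil}$ by an undilated, non-free edge $e$ whose one dilated endpoint has a single preimage; by the labeling conventions of Section~\ref{subsec:doublecovers} the two lifts of $e$ run from that preimage to $\widetilde{w}^{+}$ and to $\widetilde{w}^{-}$, so both preimages of $w$ are connected to $p^{-1}(G_{\dil})$. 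For a vertex one step further, the connecting edge is free, and its two lifts join $\widetilde{w}^{+},\widetilde{w}^{-}$ to the two (already connected) preimages of the parent vertex; this propagates connectedness to all of $\tG$. For the removal statement, fix an undilated edge $e$: since $e$ corresponds to an edge of the tree $T$, deleting it disconnects $G$ into $G_1\supseteq G_{\dil}$ and a nonempty piece $G_2$ consisting only of undilated vertices and edges, which is a tree. Then $p^{-1}(G_2)\to G_2$ is a free cover of a tree, hence trivial, so $p^{-1}(G_2)$ is disconnected and $p^e$ is not relatively connected.

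For a cover of the second type, $p$ is free and nontrivial over a connected genus-one graph $G$. Relative connectedness is immediate from covering-space theory: connected free double covers of $G$ correspond to the nonzero elements of $H^1(G,\ZZ/2\ZZ)=\Hom(\pi_1 G,\ZZ/2\ZZ)$, and $p$ is assumed nontrivial. For the removal statement, I note that deleting any edge $e$ from a genus-one graph always leaves a connected component $G_2$ of genus zero: if $e$ lies on the unique cycle then $G^e$ is itself a tree, and otherwise $e$ is a bridge and exactly one of the two resulting components has genus zero. In either case $G_2$ is a (nonempty) tree, so $p^{-1}(G_2)\to G_2$ is a trivial free cover and $p^{-1}(G_2)$ is disconnected, whence $p^e$ is not relatively connected.

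I expect the only genuine obstacle to be the relative connectedness in the first type: one must correctly carry connectedness from the dilated core $p^{-1}(G_{\dil})$ outward, taking care to distinguish the non-free edges incident to $G_{\dil}$ (whose two lifts share the single dilated endpoint) from the free edges between undilated vertices (whose lifts may cross according to the sign). Once the tree structure of $G/G_{\dil}$ is established, both non-connectedness statements reduce to the triviality of free covers over trees and are routine.
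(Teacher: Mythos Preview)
Your proof is correct and follows essentially the same route as the paper's: both reduce to the observation that $G$ decomposes as the dilation subgraph with undilated trees hanging off it (type 1) or that deleting an edge from a genus-one graph always leaves a tree component (type 2), and then invoke that free covers of trees are trivial. The paper is terser---it treats the relative connectedness of $p$ itself as self-evident and does not spell out the propagation argument you give for type 1---but the underlying idea is identical.
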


\begin{proof} If $p$ is of the first type, then $G$ consists of the dilation subgraph $G_{\dil}$ together with a number of undilated trees attached to it. Hence for any undilated edge $e$, the graph $G^e$ has two connected components, one of which is a tree with no dilation. Hence $p^e$ is not relatively connected since a tree has no nontrivial free double cover.

Similarly, if $p$ is free and $g(G)=1$, then $G^e$ is either a tree or a disjoint union of a genus one graph and a tree, and in both cases $p^e$ is not relatively connected.
\end{proof}

The next lemma shows that no other double covers have this property.

\begin{lemma} \label{lemma:nonelementary} Let $p:\tG\to G$ be a double cover of connected graphs of any of the following three types:

\begin{enumerate} \item $p$ is dilated and $G_{\dil}$ is not connected.

\item $p$ is dilated,  $G_{\dil}$ is connected, and $g(G)>g(G_{\dil})$.

\item $p$ is free and $g(G)\geq 2$.
    
\end{enumerate}
There exists an undilated edge $e\in E(G)$ such that $p^e:\tG^e\to G^e$ is relatively connected.
\label{lemma:nonelementarydoublecovers}
\end{lemma}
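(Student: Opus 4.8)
The plan is to reduce relative connectedness of a restricted cover to two local criteria and then, in each of the three cases, exhibit an undilated edge whose removal satisfies them. The two criteria I would establish first are: (i) if a connected graph $G'$ carries at least one dilated vertex, then the preimage $\tG'$ under (the restriction of) $p$ is automatically connected; and (ii) if $G'$ is connected with no dilated vertex, so that the cover is free over $G'$, then $\tG'$ is connected if and only if $G'$ contains an \emph{odd} cycle, i.e. a cycle $\gamma$ with $\sigma(\gamma)=-1$. Criterion (ii) is the elementary fact, noted above, that a free double cover of a connected graph is relatively connected exactly when it is nontrivial. For criterion (i) I would argue by propagation: fix a dilated vertex $v_0$, whose unique preimage is $\tv_0$, and show by induction along a path from $v_0$ to an arbitrary vertex $w$ that every preimage of every vertex on the path lies in the component of $\tv_0$. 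The inductive step uses that an undilated edge incident to a dilated vertex attaches both its preimages to the single preimage of that vertex, while an undilated edge between two undilated vertices joins the two sheets; in either case both preimages of the next vertex are reached. Granting (i) and (ii), removing an undilated \emph{non-bridge} edge $e$ keeps $G^e$ connected and preserves $G_{\dil}$ entirely, so by (i) the cover $p^e$ is relatively connected the moment $G_{\dil}\neq\emptyset$.

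Case (2) is then immediate: since $G_{\dil}$ is connected and $g(G)>g(G_{\dil})$, contracting $G_{\dil}$ to a point yields a graph of genus $g(G)-g(G_{\dil})\geq 1$, hence a cycle meeting the undilated part of $G$. Any undilated edge $e$ on this cycle is a non-bridge, and removing it leaves $G^e$ connected and still containing $G_{\dil}$; criterion (i) finishes the case. Case (1), where $G_{\dil}$ is disconnected, splits in two. If some undilated edge is a non-bridge, I would remove it exactly as above (here $G^e$ stays connected and retains all of $G_{\dil}$). Otherwise every undilated edge is a bridge, and this is the delicate subcase: I would contract each connected component $D_1,\dots,D_k$ (with $k\geq 2$) of $G_{\dil}$ to a vertex $d_i$. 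The resulting graph $\bar G$ has no cycles, since an undilated cycle would contradict the bridge assumption and contracting connected subgraphs creates none, so $\bar G$ is a tree containing the distinct vertices $d_1,\dots,d_k$. Choosing any edge $e$ on the tree-path from $d_1$ to $d_2$ and removing it separates $G^e$ into two components, one containing $D_1$ and the other $D_2$; each therefore contains a dilated vertex, and criterion (i) shows both preimages are connected.

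Case (3) is handled purely through criterion (ii). Because the cover is relatively connected and free, $\sigma$ is nontrivial, so there is a simple odd cycle $\gamma$. Since $g(G)\geq 2$ there is a second cycle homologically independent of $\gamma$, which must contain an edge $e\notin\gamma$; this $e$ lies on a cycle, hence is a non-bridge, so $G^e$ is connected and still carries the odd cycle $\gamma$. By (ii) the preimage $\tG^e$ is connected, so $p^e$ is relatively connected.

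This lemma is the counterpart to Lemma~\ref{lemma:elementarydoublecovers}, and the routine core of every case is the same: remove a non-bridge undilated edge. The only genuine obstacle is the subcase of (1) in which all undilated edges are bridges, where a non-bridge edge simply does not exist; there the point is instead to cut a bridge so that a dilated vertex survives on each side, and the tree structure of $\bar G$, forced by the absence of undilated cycles, is exactly what guarantees that a bridge separating two dilation components can be found. I would also take care to record that in all three cases at least one undilated edge is available for removal; in case (1) this is automatic because a connected $G$ with disconnected $G_{\dil}$ must contain undilated edges joining the components.
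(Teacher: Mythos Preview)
Your proof is correct and follows essentially the same approach as the paper: criteria (i) and (ii) are exactly what the paper uses implicitly, and Cases (2) and (3) match the paper's argument almost verbatim. The only difference is that your Case (1) is more elaborate than necessary---the paper simply picks any undilated edge $e$ on a path between two distinct components of $G_{\dil}$ and observes that whether or not $e$ is a bridge, each component of $G^e$ retains a dilated vertex, so your bridge/non-bridge case split (and the tree argument for $\bar G$) can be avoided entirely.
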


\begin{proof} If $G_{\dil}$ has two distinct connected components, choose a path between them consisting of undilated edges, and let $e$ be any edge on this path. The graph $G^e$ may be disconnected (if $e$ is a bridge), but each of its connected components has dilation, hence $p^e$ is relatively connected.

If $G_{\dil}$ is connected but $g(G)>g(G_{\dil})$, then the graph obtained by contracting $G_{\dil}$ to a single vertex has genus $g(G)-g(G_{\dil})>0$ and  hence a nontrivial cycle. Let $e$ be an (undilated) edge on this cycle. Removing $e$ from the original graph $G$ does not disconnect it, hence $p^e$ is relatively connected since $G^e$ has dilation. 

Finally, if $p$ is free, then there is a simple cycle $C$ on $G$ such that the restriction of $p$ to $C$ is not trivial. If $g(G)\geq 2$, we can also find a non-bridge edge $e\in G$ that does not lie on $C$. Removing $e$ does not disconnect $G$, and the cover $p^e$ remains relatively connected.
\end{proof}

We now prove that $M^*(\tG/G)$ is a matroid by showing that it is the dual of the signed graphic matroid of Zaslavsky. For the convenience of the reader, we give the bases of the matroid $M^*(\tG/G)$ as a separate definition.

\begin{definition} \label{def:ogod} Let $p:\tG\to G$ be a nontrivial double cover of a connected graph $G$, let $F\subset E_{\ud}(G)$ be a set of undilated edges, let $G\backslash F=G_1\cup\cdots\cup G_k$ be the decomposition into connected components, and let $p_i:p^{-1}(G_i)\to G_i$ be the restrictions of $p$. The set $F$ is called an \emph{odd genus one decomposition}, or \emph{ogod} for short, if each $p_i$ is an elementary double cover. 
    
\end{definition}

\begin{remark}\label{rem:ogod}
This terminology was introduced in~\cite{2022LenZakharov} in the context of free double covers. If $p:\tG\to G$ is free, then each $p_i:p^{-1}(G_i)\to G_i$ is a non-trivial double cover of a genus one graph by a genus one graph, corresponding to the nontrivial (i.e. \emph{odd}) element of $H^1(G_i,\ZZ/2\ZZ)=\ZZ/2\ZZ$. If $p$ is edge-free, then each $G_i$ may also be viewed as a genus one graph by assigning each dilated vertex an intrinsic genus. Hence the acronym \enquote{ogod}. The terminology breaks down for double covers with edge dilation, however, we generally do not consider such covers (see Lemma~\ref{lem:edgefree}).

\end{remark}

\begin{proposition} \label{prop:Prymmatroid} Let $p:\tG\to G$ be a nontrivial double cover of a connected graph $G$. Then $M^*(\tG/G)$ is a matroid of rank $g(\tG)-g(G)$ on the set $E_{\ud}(G)$ of undilated edges of $G$, whose bases are the ogods of $p$.

\end{proposition}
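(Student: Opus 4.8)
The plan is to exhibit $M^*(\tG/G)$ as a \emph{linearly representable} matroid over $\QQ$, namely the matroid represented by the lattice $\Ker\pi_*$, and to read off both the rank and the bases from this representation. By~\eqref{eq:chainsontarget} every $\gamma\in\Ker\pi_*$ is a $\ZZ$-combination of the classes $\te^+-\te^-$ with $e\in E_{\ud}(G)$, and recording the coefficients $\gamma(e)$ gives an injection $\Ker\pi_*\hookrightarrow\ZZ^{E_{\ud}(G)}$ onto a sublattice of rank $h=g(\tG)-g(G)$. For $F\subset E_{\ud}(G)$ let $r_F:\Ker\pi_*\to\QQ^F$ be the coordinate projection. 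The key claim is that $M^*(\tG/G)$ is exactly the matroid on $E_{\ud}(G)$ in which $F$ is independent iff $r_F$ is surjective; since this is the column matroid of any matrix whose row space is $\Ker\pi_*\otimes\QQ$, it is automatically a matroid, of rank $\dim(\Ker\pi_*\otimes\QQ)=g(\tG)-g(G)$.

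To match the two notions of independence I would use the exact sequence $0\to\Ker\pi_*^F\to\Ker\pi_*\xrightarrow{r_F}\QQ^F$, where $\pi_*^F$ is the pushforward of the restricted cover $p|_{\tG\setminus p^{-1}(F)}$; indeed $\Ker r_F$ consists of the classes supported away from $F$, which is precisely $\Ker\pi_*^F$. Thus $r_F$ is surjective iff $\dim\Im r_F=h-\rk(\Ker\pi_*^F)=|F|$. The engine is a genus/Euler-characteristic bookkeeping: writing $G\setminus F=G_1\cup\cdots\cup G_k$ one has $\rk(\Ker\pi_*^F)=\sum_i\rk(\Ker\pi_{*,i})$, and for each component $p_i:p^{-1}(G_i)\to G_i$ one checks that $\rk\Ker\pi_{*,i}=|E_{\ud}(G_i)|-|V_{\ud}(G_i)|$ when $p_i$ is relatively connected, and one more than that when $p_i$ is a trivial free cover. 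Here I would use that over a connected graph a relatively disconnected degree-two cover must be the \emph{trivial} free cover: a dilated vertex has a single, involution-fixed preimage, so two involution-swapped sheets can contain no dilated vertex. Summing and using $\sum_i|E_{\ud}(G_i)|=|E_{\ud}(G)|-|F|$ and $\sum_i|V_{\ud}(G_i)|=|V_{\ud}(G)|$ yields
\[
\rk(\Ker\pi_*^F)=h-|F|+t,\qquad t:=\#\{\,i : p_i\text{ not relatively connected}\,\},
\]
so $\dim\Im r_F=|F|-t$. Hence $r_F$ is surjective iff $t=0$, i.e.\ iff $p|_{\tG\setminus p^{-1}(F)}$ is relatively connected, which is exactly Definition~\ref{def:Prymmatroid}. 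This proves that $M^*(\tG/G)$ is a matroid of the asserted rank.

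It remains to identify the bases with the ogods of Definition~\ref{def:ogod}. A basis is a maximal independent set $B$: the cover $p|_{\tG\setminus p^{-1}(B)}$ is relatively connected, but removing any further undilated edge destroys this. Relative connectivity is local to components, so for a fixed $p_i$ the removal of \emph{every} undilated edge of $G_i$ relatively disconnects $p_i$ precisely when $p_i$ admits no removable edge. By Lemma~\ref{lemma:elementarydoublecovers} this holds when $p_i$ is elementary, and by Lemma~\ref{lemma:nonelementary} it fails otherwise; since the elementary covers and the three types of Lemma~\ref{lemma:nonelementary} exhaust all relatively connected covers of a connected graph (according to whether the cover is dilated or free, and to the genus of the dilation subgraph respectively of the base), $B$ is maximal iff every $p_i$ is elementary, i.e.\ iff $B$ is an ogod. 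As a consistency check, an elementary cover has $|E_{\ud}(G_i)|=|V_{\ud}(G_i)|$, hence $\rk\Ker\pi_{*,i}=0$, so $|B|=h-\rk(\Ker\pi_*^B)=h$, matching the rank.

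I expect the main obstacle to be the genus bookkeeping of the second paragraph: one must treat the free and dilated cases uniformly and, most delicately, handle the components $G_i$ over which the restricted cover is relatively \emph{dis}connected, showing these are forced to be trivial free covers and contribute the extra $+1$ to the rank. Once the single identity $\rk(\Ker\pi_*^F)=h-|F|+t$ is established, everything else is formal; in particular this simultaneously identifies $M^*(\tG/G)$ with the dual of Zaslavsky's signed graphic matroid and furnishes it with an explicit $\QQ$-representation by anti-invariant $1$-cycles.
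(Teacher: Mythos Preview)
Your proof is correct and takes a genuinely different route from the paper's. The paper establishes that $M^*(\tG/G)$ is a matroid by translating the double cover into Zaslavsky's framework of signed graphs with half-arcs and free loops, and then quoting Zaslavsky's Theorem~5.1 to identify the bases of the dual $M(\Sigma)$; the rank is computed separately by a two-step reduction (contract dilated edges, then resolve dilated vertices) to the free case. By contrast, you give a direct linear representation over $\QQ$: the column matroid of any matrix with row space $\Ker\pi_*\otimes\QQ$. Your rank formula $\rk(\Ker\pi_*^F)=h-|F|+t$, obtained by Euler-characteristic bookkeeping on components, simultaneously verifies the matroid axioms, the rank, and the description of independent sets in one stroke, with no appeal to Zaslavsky. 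Both proofs use Lemmas~\ref{lemma:elementarydoublecovers} and~\ref{lemma:nonelementary} to identify bases with ogods.

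What your approach buys: self-containment (no external matroid theorem needed) and, as you note, an explicit $\QQ$-representation of $M^*(\tG/G)$ by anti-invariant cycles, which is exactly the structure exploited later in Section~\ref{sec:recovery} to reconstruct the Prym variety. What the paper's approach buys: it makes the identification with Zaslavsky's signed graphic matroid explicit, which is conceptually valuable and allows one to import further results from that literature (orientability, contraction/deletion behavior, etc.). One small point worth making explicit in your write-up is that $\pi_*$ is surjective over $\QQ$ (since $\pi_*\pi^*=2$), which you implicitly use when asserting $\rk\Ker\pi_{*,i}=g(\tG_i)-g(G_i)$.
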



    
    
    
    

\begin{proof} It follows immediately from Lemmas~\ref{lemma:elementarydoublecovers} and~\ref{lemma:nonelementarydoublecovers} that a subset $F\subset E_{\ud}(G)$ is a basis, in other words a maximal independent set of $M^*(\tG/G)$, if and only if each $p_i$ is an elementary double cover. To show that $M^*(\tG/G)$ is a matroid, we show that its bases are the complements of the bases of the matroid $M(\Sigma)$ of a signed graph $\Sigma$ defined by Zaslavsky in~\cite{1982Zaslavsky}. We recall the definitions. Consider a tuple $\Sigma=(V,E,A,F,\sigma)$ of the following kind:

\begin{enumerate}
    \item $V$ and $E$ are respectively the vertex and edge sets of a graph $G'$, which is not necessarily connected. The set of connected components of $G'$ is denoted $\pi(\Sigma)$.
    \item $A$ is a set of \emph{half-arcs}, equipped with a root map $r:A\to V(G')$. An element $e\in A$ is an edge having one end rooted on the graph $G'$ and one loose end. When speaking of connected components of $G'$, we include the half-arcs rooted on it. 
    \item $F$ is a set of \emph{free loops}, which are edges with no root vertices.
    \item $\sigma:E\cup F\to \{\pm 1\}$ is a partial parity assignment on the edges, taking values $\sigma(f)=+1$ on all $f\in F$.
\end{enumerate}
The set of \emph{arcs} of $\Sigma$ is $E\cup A\cup F$ and it is the ground set of the matroid $M(\Sigma)$. A closed edge path $e_1\cdots e_n$ in $G'$ is called \emph{balanced} if $\sigma(e_1)\cdots\sigma(e_n)=1$. A set of arcs is called \emph{balanced} if it contains no half-arcs and if every closed path in it is balanced. The set of connected components of $G'$ whose arc set is balanced is denoted $\pi_b(\Sigma)$.

The bases of the matroid $M(\Sigma)$ are defined by Theorem 5.1 in~\cite{1982Zaslavsky} (we note that part (g) of Theorem 5.1, which specifically defines bases, is inaccurately stated, and was subsequently corrected in \cite{1983ZaslavskyErratum}). A set $S\subset E\cup A\cup F$ is a basis if the following conditions hold:

\begin{enumerate}
    \item For each balanced component $B\in \pi_b(\Sigma)$, the edges of $S$ in $B$ form a spanning tree for $B$.

    \item Each unbalanced connected component of the subgraph induced by $S$ is either a tree together with a unique half-arc, or a genus one graph with unbalanced cycle and no half-arcs.

    \item $S$ contains no free loops. 
\end{enumerate}

Now let $p:\tG\to G$ be a double cover. We define the associated tuple $\Sigma=(V,E,A,F,\sigma)$ as follows:
\begin{enumerate}
    \item The graph $G'$ is the free subgraph $G_{\fr}\subset G$, and $\sigma:E\to \{\pm 1\}$ is the parity assignmend defining the free double cover $p|_{p^{-1}(G_{\fr})}:p^{-1}(G_{\fr})\to G_{\fr}$ with respect to some choice of vertex labeling (see Section~\ref{subsec:doublecovers} for details).
    \item The half-arcs $A$ are the undilated edges having one free root vertex on $G_{\fr}$ and one dilated root vertex (which we discard).
    \item The free loops $F$ are the undilated edges both of whose root vertices are dilated.
\end{enumerate}
In other words, we delete the dilated vertices and edges from $G$, and leave dangling any undilated edges with missing root vertices, so that $E_{\ud}(G)=E\cup A\cup F$ according to whether an undilated edge loses none, one, or both root vertices.

Suppose further that $G$ is connected. If $p$ is free and nontrivial, then $G'=G$ is the unique (unbalanced) connected component, and there are no half-arcs or free loops. If $p$ is dilated, then each connected component of $G'$ has at least one half-arc (an undilated edge with one dilated and one undilated root vertex) or is a free loop. We see that $\pi_b(\Sigma)$ is empty in both cases.

Now let $F\subset E_{\ud}(G)$ be a basis of $M^*(\tG/G)$, let $G\backslash F=G_1\cup \cdots G_k$ be the decomposition into connected components, and let $p_i:p^{-1}(G_i)\to G_i$ be the corresponding elementary double covers. If $p_i$ is of the first type, then $G_i$ consists of its dilation subgraph $(G_i)_{\dil}$ with a number of attached undilated trees. Removing $(G_i)_{\dil}$, each tree acquires a unique half-arc, and no free loops are formed. If $p_i$ is of the second type, then $G_i$ is a genus one graph and its unique cycle is unbalanced because the free double cover $p_i$ is nontrivial. Hence $E_{\ud}(G)\backslash F$ is a basis of the signed graphic matroid $M(\Sigma)$. Reversing this construction, we see that all bases are obtained in this way. Hence $M^*(\tG/G)$ is the dual matroid to $M(\Sigma)$.

To complete the proof, we show that the rank of $M^*(\tG/G)$ is equal to $g(\tG)-g(G)$. We use the construction of Example~\ref{ex:resolution}. Let $p_{\ef}:\tG_{\ef}\to G_{\ef}$ be the edge-free double cover obtained from $p:\tG\to G$ by contracting each of the $n$ connected components of $G_{\dil}$ to a separate dilated vertex. The matroids $M^*(\tG/G)$ and $M^*(\tG_{\ef}/G_{\ef})$ are canonically isomorphic. Indeed, $G$ and $G_{\ef}$ have the same set of undilated edges, and for any $F\subset E_{\ud}(G)=E_{\ud}(G_{\ef})$, the connected componets of $G_{\ef}\backslash F$ are obtained from the connected components of $G\backslash F$ by contracting the dilated edges. Hence $F$ is independent for $M^*(\tG/G)$ if and only if it is independent for $M^*(\tG_{\ef}/G_{\ef})$. Furthermore, $g(\tG_{\ef})-g(G_{\ef})=g(\tG)-g(G)$, since both $\tG$ and $G$ lose genus equal to the total genera of the components of $G_{\dil}$.

Now let $p_{\free}:\tG_{\free}\to G_{\free}$ be the free double cover obtained by resolving the $n$ dilated vertices of $G_{\ef}$. Then $g(G_{\free})=g(G_{\ef})+n$ and $g(\tG_{\free})=g(\tG_{\ef})+n$, so $g(\tG_{\free})-g(G_{\free})=g(\tG_{\ef})-g(G_{\ef})$. Furthermore, under the natural inclusion $E_{\ud}(G_{\ef})\subset E_{\ud}(G_{\free})$, any basis of $M^*(\tG_{\ef}/G_{\ef})$ is also a basis of $M^*(\tG_{\free}/G_{\free})$. Hence it is enough to prove the assertion for the free double cover $p_{\free}:\tG_{\free}\to G_{\free}$. Let $F\subset E(G_{\free}) = E_{\ud}(G_{\free})$ be a basis of $M^*(\tG_{\free}/G_{\free})$, then the connected components $G_{\free}\backslash F=G_1\cup\cdots\cup G_k$ all have genus one. Since the quantity $g(\cdot)-1=|E(\cdot)|-|V(\cdot)|$ is additive in connected components, it follows that 
\[
\rk M^*(\tG/G)=|F|= g(G_{\free})-1-\sum_{i=1}^k\big(g(G_i)-1\big)=g(G_{\free})-1=g(\tG_{\free})-g(G_{\free})=g(\tG)-g(G).
\]
%
%
%
%
%
\end{proof}

As an illustration, and for future reference, we describe the $1$-circuits and $2$-circuits of the signed cographic matroid $M^*(\tG/G)$. A $1$-circuit is an undilated edge $e\in E_{\ud}(G)$ whose removal relatively disconnects the double cover. Figure~\ref{fig:1circuits} shows the two topological types of $1$-circuits of $M^*(\tG/G)$, depending on whether or not removing $e$ disconnects the target graph $G$. In the second diagram, the edge $e$ may or may not be a loop.

\begin{figure}
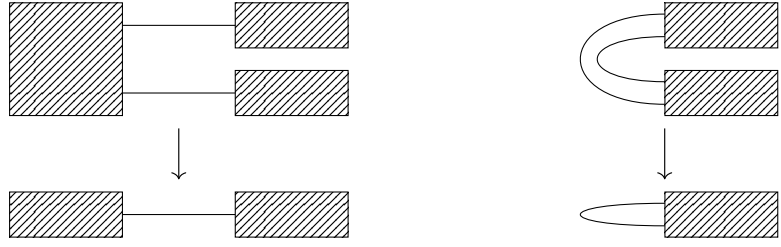

	\centering
	\doublecover{
	\draw[pattern=north east lines] (-1,-0.2) rectangle (0, 0.8);
	\path[draw] (0,0) -- (1,0);
	\path[draw] (0, 0.6) -- (1, 0.6);
	\draw[pattern=north east lines] (1,-0.2) rectangle (2, 0.2);
	\draw[pattern=north east lines] (1,0.4) rectangle (2, 0.8);
}{
	\draw[pattern=north east lines] (-1,-0.2) rectangle (0, 0.2);
	\path[draw] (0,0) -- (1,0);
	\draw[pattern=north east lines] (1,-0.2) rectangle (2, 0.2);
}
\hspace{2cm}
\doublecover{
	\path[draw] (1, -0.1) .. controls (0, -0.1) and (0, 0.7) .. (1, 0.7);
	\path[draw] (1, 0.1) .. controls (0.2, 0.1) and (0.2, 0.5) .. (1, 0.5);
	\draw[pattern=north east lines] (1,-0.2) rectangle (2, 0.2);
	\draw[pattern=north east lines] (1,0.4) rectangle (2, 0.8);
}{
	\path[draw] (1, -0.1) .. controls (0, -0.1) and (0, 0.1) .. (1, 0.1);
	\draw[pattern=north east lines] (1,-0.2) rectangle (2, 0.2);
}
	\caption{1-circuits of $M^*(\tG / G)$. Boxes denote arbitrary connected graphs. The curved edge on the target may or may not be a loop. }
	\label{fig:1circuits}
\end{figure}

Similarly, a $2$-circuit is a subset $\{e,f\}\subset E_{\ud}(G)$ whose removal relatively disconnects $p$ but such that neither $\{e\}$ nor $\{f\}$ is a $1$-circuit. There are four topological types of $2$-circuits, illustrated on Figure~\ref{fig:2circuits}, where the curved edges may or may not represent loops. We label the edges of the $2$-circuit with multiplicities that are used to define the $2$-simplification of the matroid $M^*(\tG/G)$ (see Definition~\ref{def:simplification_mult} for details, or simply note that an edge is labeled by a 2 if it is a bridge and 1 otherwise). 

\begin{figure}
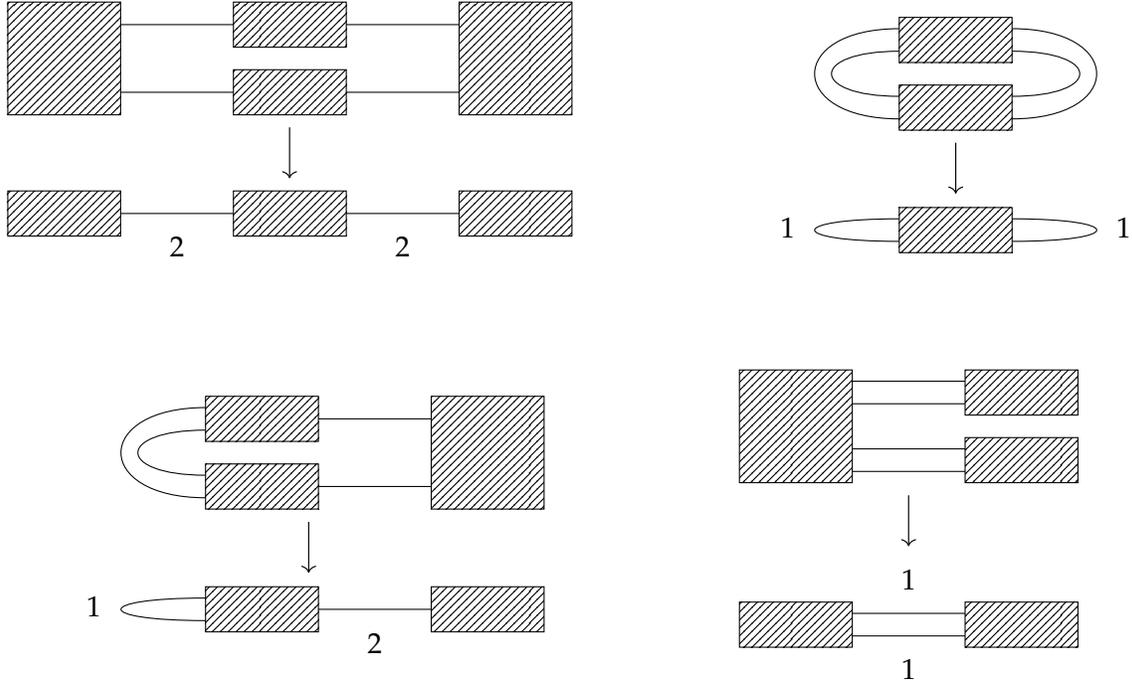

	\centering

\doublecover{
	\draw[pattern=north east lines] (-1,-0.2) rectangle (0, 0.8);
	\path[draw] (0,0) -- (1,0);
	\path[draw] (0, 0.6) -- (1, 0.6);
	\draw[pattern=north east lines] (1,-0.2) rectangle (2, 0.2);
	\draw[pattern=north east lines] (1,0.4) rectangle (2, 0.8);
	\path[draw] (2, 0) -- (3, 0);
	\path[draw] (2, 0.6) -- (3, 0.6);
	\draw[pattern=north east lines] (3,-0.2) rectangle (4, 0.8);
}{
	\draw[pattern=north east lines] (-1,-0.2) rectangle (0, 0.2);
	\path[draw] (0,0) -- (1,0);
    \draw (0.5, -0.1) node[anchor = north] {2};
	\draw[pattern=north east lines] (1,-0.2) rectangle (2, 0.2);
	\path[draw] (2,0) -- (3, 0);
    \draw (2.5, -0.1) node[anchor = north] {2};
	\draw[pattern=north east lines] (3,-0.2) rectangle (4, 0.2);
}%
\hspace{2cm}
\doublecover{
	\path[draw] (1, -0.1) .. controls (0, -0.1) and (0, 0.7) .. (1, 0.7);
	\path[draw] (1, 0.1) .. controls (0.2, 0.1) and (0.2, 0.5) .. (1, 0.5);
	\draw[pattern=north east lines] (1,-0.2) rectangle (2, 0.2);
	\draw[pattern=north east lines] (1,0.4) rectangle (2, 0.8);
	\path[draw] (2, -0.1) .. controls (3, -0.1) and (3, 0.7) .. (2, 0.7);
	\path[draw] (2, 0.1) .. controls (2.8, 0.1) and (2.8, 0.5) .. (2, 0.5);
}{
	\path[draw] (1, -0.1) .. controls (0, -0.1) and (0, 0.1) .. (1, 0.1);
    \draw (0.2, 0) node[anchor = east] {1};
	\draw[pattern=north east lines] (1,-0.2) rectangle (2, 0.2);
	\path[draw] (2, -0.1) .. controls (3, -0.1) and (3, 0.1) .. (2, 0.1);
    \draw (2.8, 0) node[anchor = west] {1};
}

\vspace{1cm}

\doublecover{
	\path[draw] (1, -0.1) .. controls (0, -0.1) and (0, 0.7) .. (1, 0.7);
	\path[draw] (1, 0.1) .. controls (0.2, 0.1) and (0.2, 0.5) .. (1, 0.5);
	\draw[pattern=north east lines] (1,-0.2) rectangle (2, 0.2);
	\draw[pattern=north east lines] (1,0.4) rectangle (2, 0.8);
	\path[draw] (2, 0) -- (3, 0);
	\path[draw] (2, 0.6) -- (3, 0.6);
	\draw[pattern=north east lines] (3,-0.2) rectangle (4, 0.8);
    \draw (0.2, 0) node[anchor = east] {\phantom{1}};
}{
	\path[draw] (1, -0.1) .. controls (0, -0.1) and (0, 0.1) .. (1, 0.1);
    \draw (0.2, 0) node[anchor = east] {1};
	\draw[pattern=north east lines] (1,-0.2) rectangle (2, 0.2);
	\path[draw] (2, 0) -- (3, 0);
    \draw (2.5, -0.1) node[anchor = north] {2};
	\draw[pattern=north east lines] (3,-0.2) rectangle (4, 0.2);
}%
\hspace{2cm}
\doublecover{
	\draw[pattern=north east lines] (-1,-0.2) rectangle (0, 0.8);
	\path[draw] (0,-0.1) -- (1,-0.1);
	\path[draw] (0, 0.5) -- (1, 0.5);
	\path[draw] (0,0.1) -- (1,0.1);
	\path[draw] (0, 0.7) -- (1, 0.7);
	\draw[pattern=north east lines] (1,-0.2) rectangle (2, 0.2);
	\draw[pattern=north east lines] (1,0.4) rectangle (2, 0.8);
}{
	\draw[pattern=north east lines] (-1,-0.2) rectangle (0, 0.2);
	\path[draw] (0,-0.1) -- (1,-0.1);
	\path[draw] (0, 0.1) -- (1, 0.1);
    \draw (0.5, -0.2) node[anchor = north] {1};
    \draw (0.5, 0.2) node[anchor = south] {1};
	\draw[pattern=north east lines] (1,-0.2) rectangle (2, 0.2);
}
	\caption{2-circuits of $M^*(\tilde G / G)$. Boxes denote arbitrary connected graphs. Numbers indicate multiplicities (see Definition~\ref{def:simplification_mult}).}
	\label{fig:2circuits}
\end{figure}


\subsection{The dual matroid $M(\tG/G)$} In this section, we describe the \emph{signed graphic matroid} $M(\tG/G)$, which is the dual of the signed cographic matroid $M^*(\tG/G)$, in terms of the double cover $\tilde G \to G$. We first give a description of its bases, independent sets, and circuits. As motivation, recall that a set of edges $F\subset E(G)$ of a graph $G$ is independent for the graphic matroid if the subgraph $G[F]$ induced by $F$ is a forest, or equivalently if the group $H_1 \big(G[F],\ZZ \big)$ is trivial. We now show that a similar statement holds for $M(\tG/G)$ as well.

\begin{proposition} \label{prop:signedgraphic} Let $p:\tG\to G$ be a double cover and let $F\subset E_{\ud}(G)$ be a set of undilated edges. Let $G[F]=G_1\cup\cdots\cup G_k$ be the connected component decomposition of the subgraph $G[F]$ generated by $F$, and let $p[F]:p^{-1}(G[F])\to G[F]$ and $p_i:p^{-1}(G_i)\to G_i$ denote the restrictions of $p$.

\begin{enumerate}
    \item $F$ is a basis of $M(\tG/G)$ if and only if $G[F]$ contains all undilated vertices of $G$ and each $p_i$ is an elementary double cover. \label{item:Mbasis} 

    \item $F$ is independent if and only if the map $p[F]_*:H_1 \big(p^{-1}(G[F]),\ZZ \big)\to H_1 \big(G[F],\ZZ \big)$ induced by $p[F]$ is injective. \label{item:Mindep}
       
    
    \item $F$ is a circuit if and only if $G[F]=G_1$ is connected, has no undilated vertices of valency one, and either $g(G[F])=1$ and $p[F]$ is the trivial free double cover (type I on Figure~\ref{fig:circuits}), or $p[F]$ is one of types II-VI shown on Figure~\ref{fig:circuits}. 
    
\end{enumerate} 
    
\end{proposition}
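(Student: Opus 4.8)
The engine of the whole proposition is a linear representation of the matroid. By Proposition~\ref{prop:Prymmatroid} we have $M(\tG/G)=M(\Sigma)$, and by Zaslavsky's theory~\cite{1982Zaslavsky} the signed graphic matroid $M(\Sigma)$ is represented over $\QQ$ by its signed incidence matrix. I would first rewrite this representation intrinsically in terms of the cover. Passing to the anti-invariant chains $C_\bullet^-(\tG)_\QQ=\{c:i_*c=-c\}$, the group $C_0^-(\tG)_\QQ$ is spanned by the classes $\chi_v:=\tv^+-\tv^-$ attached to the undilated vertices (note $\chi_v=0$ when $v$ is dilated). Assigning to each undilated edge $e$ the vector $\partial(\te^+-\te^-)\in C_0^-(\tG)_\QQ$ and computing the boundary shows that, up to sign, these are exactly the columns of the signed incidence matrix: $\chi_u-\chi_v$ for a positive edge $uv$, $\chi_u+\chi_v$ for a negative edge, $\chi_u$ for a half-arc with dilated endpoint $v$, and $0$ for a free loop. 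Hence $M(\tG/G)$ is the column matroid of $e\mapsto\partial(\te^+-\te^-)$.

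Part~(2) is then immediate. The set $F$ is independent exactly when the vectors $\{\partial(\te^+-\te^-)\}_{e\in F}$ are linearly independent, i.e.\ when there is no nonzero relation $\partial\big(\sum_{e\in F}\gamma(e)(\te^+-\te^-)\big)=0$; such a relation is precisely a nonzero anti-invariant $1$-cycle on $p^{-1}(G[F])$. It remains to identify these with $\Ker p[F]_*$. One inclusion is formal: $i_*\tilde\gamma=-\tilde\gamma$ forces $p[F]_*\tilde\gamma=p[F]_*i_*\tilde\gamma=-p[F]_*\tilde\gamma$, so $p[F]_*\tilde\gamma=0$. For the reverse inclusion I would decompose $H_1(p^{-1}(G[F]))_\QQ$ into $i_*$-eigenspaces and use the transfer relation $p[F]_*\circ\mathrm{tr}=2\,\mathrm{id}$ to see that $p[F]_*$ is injective on the invariant part, so $\Ker p[F]_*$ is exactly the anti-invariant part. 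As $H_1$ of a graph is free abelian, integral and rational injectivity coincide, and we conclude $F$ independent $\iff\Ker p[F]_*=0\iff p[F]_*$ injective.

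For part~(1) I would use matroid duality. Since $M(\tG/G)=(M^*(\tG/G))^*$, a set $F$ is a basis of $M(\tG/G)$ iff $F'=E_\ud(G)\setminus F$ is a basis of $M^*(\tG/G)$, i.e.\ an ogod (Proposition~\ref{prop:Prymmatroid}). The remaining work is to translate the ogod condition on $G\setminus F'$ into the stated condition on $G[F]$. Here the two structural facts are that $G\setminus F'$ has edge set $E(G_\dil)\cup F$ and that dilated edges join only dilated vertices. The first consequence is that an undilated vertex is non-isolated in $G\setminus F'$ iff it meets an edge of $F$; since an isolated undilated vertex is never an elementary component, the ogod condition is equivalent to $G[F]$ covering all undilated vertices. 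The second consequence is that every connected component of $G\setminus F'$ consists of a single component $D$ of $G_\dil$ together with the $F$-trees attached to it, or of a purely undilated connected piece; the genus condition in the definition of elementary then forces these attached pieces to be trees meeting $D$ in one vertex (type~I) and the purely undilated pieces to be genus-one (type~II). Matching these against the components of $G[F]$ gives the claim, and the delicate points are the degenerate configurations---half-arcs and free loops---which must be accounted for as (possibly trivial) elementary components.

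Finally, part~(3) specializes part~(2): circuits are the minimal dependent sets, hence the minimal supports of nonzero anti-invariant cycles. I would translate Zaslavsky's classification of the circuits of $M(\Sigma)$---balanced cycles, and ``handcuffs'' made of two unbalanced cycles sharing a vertex or joined by a path, together with the variants in which an unbalanced cycle is replaced by a half-arc running into the dilation subgraph---into the pictures of Figure~\ref{fig:circuits}. Type~I is the balanced cycle: $G[F]$ has genus one and $p[F]$ is the trivial free cover, so the lift splits into two disjoint cycles whose difference is the anti-invariant cycle; types~II--VI are the handcuff and dumbbell configurations, distinguished by whether the two unbalanced cycles share a vertex or are path-connected and by how many ends terminate on the dilation subgraph. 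Minimality forces $G[F]$ to be connected and to have no undilated vertex of valency one, as a separate component or a pendant undilated edge could be deleted while keeping a nonzero anti-invariant cycle. I expect the main obstacle to be the exhaustive casework confirming that exactly the configurations I--VI occur, together with the careful treatment of loops and dilated endpoints already flagged in part~(1).
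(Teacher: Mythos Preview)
Your proposal is correct. The main difference from the paper's proof lies in Part~(2): the paper argues geometrically, showing that $p[F]_*$ is injective when $F$ sits inside a basis (since each $p_i$ is then contained in an elementary cover) and that non-injectivity forces one of the configurations of Lemma~\ref{lemma:nonelementary}, which precludes extension to a basis. You instead invoke Zaslavsky's linear representation of $M(\Sigma)$ and reinterpret it as the anti-invariant boundary map $e\mapsto\partial(\te^+-\te^-)$; this is a clean and genuinely different route, and the identification of $\Ker p[F]_*$ with the anti-invariant cycles via the transfer relation $\mathrm{tr}\circ p_*=1+i_*$ is tighter than you state (it holds integrally, not just rationally, since $(1+i_*)\tilde\gamma=0$ directly gives anti-invariance). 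Your approach has the advantage of making the circuit classification in Part~(3) a direct import from Zaslavsky's description of circuits of $M(\Sigma)$, whereas the paper verifies the six types by hand. For Part~(1), both you and the paper reduce to the ogod condition via duality; your translation sketch between components of $G\setminus F'$ and components of $G[F]$ is on the right track, but the remark that free loops are ``accounted for as (possibly trivial) elementary components'' is off: a free-loop edge has two dilated endpoints, so any component of $G[F]$ containing it has disconnected dilation and is never elementary---free loops are simply loops of $M(\tG/G)$ and cannot lie in any basis.
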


\begin{proof} The bases of $M(\tG/G)$ are the complements of the bases of $M^*(\tG/G)$ in the set of undilated edges $E_{\ud}(G)$, so Part~(\ref{item:Mbasis}) is simply a restatement of Proposition~\ref{prop:Prymmatroid}. To prove Part~(\ref{item:Mindep}), we note that $(p_i)_*$ is injective if $p_i$ is an elementary double cover, and that this property is preserved if additional edges are removed. Hence $p[F]_*$ is injective for any independent set $F$. Conversely, it is easy to see (for example, from the explicit description of the kernel given in Propositions~4.19 and~4.20 in~\cite{2022RoehrleZakharov}) that if $(p_i)_*$ is not injective, then either $p_i$ is a trivial double cover of a graph $G_i$ of genus $g(G_i)\geq 1$, or else it is one of the three types listed in Lemma~\ref{lemma:nonelementary}. In either case, it is clear that $F$ is not contained in a basis of $M(\tG/G)$, hence it is dependent. 

We now describe the circuits of $M(\tG/G)$. Looking at each of the six double covers shown in Figure~\ref{fig:circuits}, we see that the pushforward map on homology is not injective, but becomes so if any edge is removed. Hence each of the double covers represents a minimal dependent set of $M(\tG/G)$, in other words a circuit. 

Conversely, let $F$ be a dependent set of $M(\tG/G)$. We remove edges from $F$ to find a minimal dependent set. Since $p[F]_*$ is not injective, we can assume without loss of generality that $(p_1)_*$ is not injective and replace $F$ by $E(G_1)$. If $G[F]$ has an undilated cycle with disconnected preimage, then we can remove all other edges and obtain a circuit of type I. If it does not, then it is one of the three types listed in Lemma~\ref{lemma:nonelementary}, and it is then elementary to verify that the circuits of types II-VI are the minimal double covers of the types given in the lemma.
\end{proof}

Figure~\ref{fig:circuits} shows the six topological types of circuits of the signed graphic matroid $M(\tG/G)$. Undilated vertices of valency 2 are not shown, so a single edge in the picture may represent a chain of edges in $G$. Bold dots represent dilated vertices. In each type, the pushforward map $p[C]_*:H_1\big(p^{-1}(G[C]),\ZZ\big)\to H_1\big(G[C],\ZZ\big)$ has one-dimensional kernel, and the indicated cycle $\gamma_C \in H_1\big( p^{-1}(G[C]), \ZZ \big)$ is a generator.

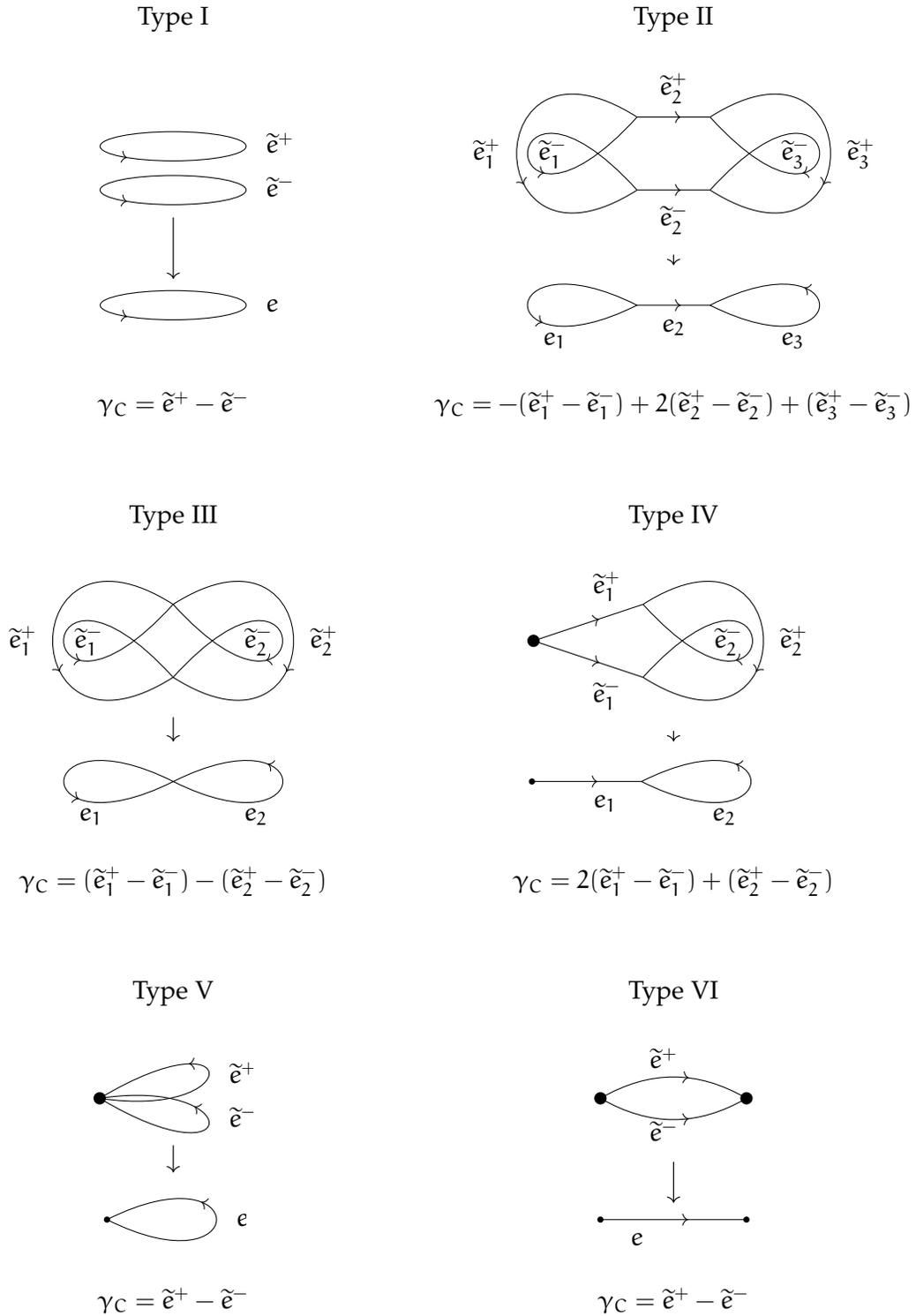
\begin{figure}[!p]
	\centering
	\def\s{1.1}

\begin{tikzcd}[row sep = tiny]
    \mbox{Type I}
    &
    \mbox{Type II}
    \\
	\begin{tikzpicture}[scale = \s]
		\orient{
            \draw[postaction = decorate] (0,0) ellipse (1 and 0.2);
            \draw[postaction = decorate] (0,0.6) ellipse (1 and 0.2);  
        }   

        \node[anchor = west] at (1.1, 0) {$\tilde e^-$};
        \node[anchor = east] at (-1.1, 0) {\phantom{$\tilde e^-$}};
        \node[anchor = west] at (1.1, 0.6) {$\tilde e^+$};
        \node[anchor = east] at (-1.1, 0.6) {\phantom{$\tilde e^+$}};
	\end{tikzpicture}
	\arrow[d]
	&
	\begin{tikzpicture}[scale = \s]
        \orient{
            \path[draw, postaction = decorate] (0, 0) -- (1,0);
            \path[draw, postaction = decorate] (0, 1) -- (1, 1);
    		\spiral{1}{0}
    		\spiralleft{0}{0}
        }

        \node[anchor=east] at (-1.7, 0.5) {$\tilde e_1^+$};
		\node[anchor=north] at (0.5, -0.05) {$\tilde e_2^-$};
        \node[anchor=west] at (2.7, 0.5) {$\tilde e_3^+$};

        \node[anchor=east] at (-0.8, 0.5) {$\tilde e_1^-$};
		\node[anchor=south] at (0.5, 1.05) {$\tilde e_2^+$};
        \node[anchor=west] at (1.8, 0.5) {$\tilde e_3^-$};
	\end{tikzpicture}
	\arrow[d]
    \\
	\begin{tikzpicture}[scale = \s]
        \orient{
            \draw[postaction = decorate] (0,0) ellipse (1 and 0.2);
        } 
		
		\node[anchor = west] at (1.1, 0) {$e$};
        \node[anchor = east] at (-1.1, 0) {\phantom{$e$}};
	\end{tikzpicture}
	&
	\begin{tikzpicture}[scale = \s]

        \orient{
            \drawselfloopleft{0}{0}
    		\path[draw, postaction = decorate] (0,0) -- (1,0);
    		\drawselfloop{1}{0}
        }
  
		\node[anchor=east] at (-0.8, -0.45) {$e_1$};
		\node[anchor=north] at (0.5, -0.05) {$e_2$};
        \node[anchor=west] at (1.8, -0.45) {$e_3$};

	\end{tikzpicture}
    \\
    \gamma_C=\tilde e^+ - \tilde e^-
    &
    \gamma_C = -(\tilde e_1^+ - \tilde e_1^-) + 2(\tilde e_2^+ - \tilde e_2^-) + (\tilde e_3^+ - \tilde e_3^-)
    \\[0.8cm]
    %
    %
    \mbox{Type III}
    &
    \mbox{Type IV}
    \\
    \begin{tikzpicture}[scale = \s]
		\orient{
            \spiral{0}{0}
    		\spiralleft{0}{0}
        }   

        \node[anchor=east] at (-1.7, 0.5) {$\tilde e_1^+$};
        \node[anchor=west] at (1.7, 0.5) {$\tilde e_2^+$};

        \node[anchor=east] at (-0.8, 0.5) {$\tilde e_1^-$};
        \node[anchor=west] at (0.8, 0.5) {$\tilde e_2^-$};
	\end{tikzpicture}
	\arrow[d]
    &
    \begin{tikzpicture}[scale = \s]

		\vertex[2]{-0.5}{0.5}

        \orient{
            \path[draw, postaction = decorate] (-0.5, 0.5) -- (1,0);
    		\path[draw, postaction = decorate] (-0.5, 0.5) -- (1,1);
    		\spiral{1}{0}        
        }

        \node[anchor=south] at (0.5, 0.9) {$\tilde e_1^+$};
        \node[anchor=north] at (0.5, 0.1) {$\tilde e_1^-$};
        
        \node[anchor=west] at (2.7, 0.5) {$\tilde e_2^+$};
        \node[anchor=west] at (1.8, 0.5) {$\tilde e_2^-$};
	\end{tikzpicture}
	\arrow[d]
    \\
	\begin{tikzpicture}[scale = \s]
		\orient{
            \drawselfloopleft{0}{0}
    		\drawselfloop{0}{0}
        }   

        \node[anchor=east] at (-0.8, -0.45) {$e_1$};
		\node[anchor=west] at (0.8, -0.45) {$e_2$};
	\end{tikzpicture}
    &
    \begin{tikzpicture}[scale = \s]
		\vertex{-0.5}{0}

        \orient{
            \path[draw, postaction = decorate] (-0.5,0) -- (1,0);
            \drawselfloop{1}{0}
        }

        \node[anchor=north] at (0.5, -0.05) {$e_1$};
		\node[anchor=west] at (1.8, -0.45) {$e_2$};
        \node[anchor=west] at (2.7, 0) {\phantom{$\tilde e_2^-$}};
	\end{tikzpicture}
	\\
    \gamma_C = (\tilde e_1^+ - \tilde e_1^-) - (\tilde e_2^+ - \tilde e_2^-)
    &
    \gamma_C = 2(\tilde e_1^+ - \tilde e_1^-) + (\tilde e_2^+ - \tilde e_2^-)
    \\[0.8cm]
	%
    %
    \mbox{Type V}
    &
    \mbox{Type VI}
    \\
	\begin{tikzpicture}[scale = \s]
		\clip (-2*\vertexsize, -0.5) rectangle (2.1, 0.6);
		\vertex[2]{0}{0}

        \orient{
            \path[draw, postaction = decorate] (0, 0) .. controls (2, -1.2) and (2, 0.3) .. (0,0);
    		\path[draw, postaction = decorate] (0, 0) .. controls (2, -0.3) and (2, 1.2) .. (0,0);
        }

        \node[anchor = west] at (1.6, -0.3) {$\tilde e^-$};
        \node[anchor = west] at (1.6, 0.3) {$\tilde e^+$};
	\end{tikzpicture}
	\arrow[d]
	&
	\begin{tikzpicture}[scale = \s]
		\vertex[2]{0}{0} 
		\vertex[2]{2}{0}

        \orient{
            \path[draw, postaction = decorate] (0, 0) to[bend left = 30] (2,0);
            \path[draw, postaction = decorate] (0, 0) to[bend right = 30] (2,0);
        }

        \node[anchor = west] at (0.5, -0.5) {$\tilde e^-$};
        \node[anchor = west] at (0.5, 0.5) {$\tilde e^+$};
	\end{tikzpicture}
	\arrow[d]
	\\
	\begin{tikzpicture}[scale = \s]
		\clip (-2*\vertexsize, -0.5) rectangle (1.9, 0.5);
		\vertex{0}{0}
		\node[anchor=west] at (1.6, 0) {$e$};

        \orient{
            \drawselfloop{0}{0}
        }		
	\end{tikzpicture}	
	&
	\begin{tikzpicture}[scale = \s]
		\vertex{0}{0}
		\vertex{2}{0}
		
        \orient{
            \path[draw, postaction = decorate] (0,0) -- (2,0);
        }
        
		\node[anchor=north] at (0.5, -0.05) {$e$};
	\end{tikzpicture}
    \\
    \gamma_C = \tilde e^+ - \tilde e^-
    &
    \gamma_C = \tilde e^+ - \tilde e^-
\end{tikzcd}
	\caption{Circuits of the signed graphic matroid. Undilated vertices of valency 2 are not shown. Bold dots represent dilated vertices. The cycle $\gamma_C$ is the fundamental cycle associated to $C$ (see Definition~\ref{def:fundcycle}).}
	\label{fig:circuits}
\end{figure}


\section{Recovering the Prym variety from the matroid}
\label{sec:recovery}

In this section, we explain how to reconstruct the principalized Prym variety $\Prym_p(\tGa / \Ga)$ of a double cover $\pi:\tGa\to \Ga$ of metric graphs from the signed graphic matroid equipped with certain additional decorations. We always implictly choose a model for the double cover, and by abuse of notation denote its signed graphic and cographic matroid by $M(\tGa/\Ga)$ and $M^*(\tGa/\Ga)$. To motivate the reconstruction procedure, we first recall how to reconstruct the Jacobian variety of a metric graph from the graphic matroid.

\subsection{Matroidal reconstruction of Jacobian and Prym: an overview} Let $\Ga$ be a metric graph. Choose an orientation for $\Ga$. Given a circuit $C$ of the graphic matroid $M(\Ga)$, there exists a simple cycle $\delta_{C}=\sum \delta_{C}(e)\cdot e$ in  $H_1(\Ga,\ZZ)$ supported on $C$ with $\delta_C(e) = \pm 1$ for all edges $e \in C$. The cycle $\delta_C$ is unique up to sign. While the support of $\delta_C$ can be determined from $M(\Ga)$, the signs of its coefficients depend on the chosen orientation and are determined by the condition that $\delta_{C}$ is a closed cycle. This information is not contained in $M(\Ga)$. 

To make the construction fully matroid-theoretic, we use the chosen orientation on $\Ga$ to define the structure of an \emph{oriented matroid} $\overrightarrow{M}(\Ga)$ on $M(\Ga)$ (see Example~\ref{ex:orientedgraphic} or~\cite[Section~1.1]{Oriented_matroids}). The coefficients of the cycles $\delta_{C}$ are then simply the signs with which the edges of $C$ occur in an oriented circuit $\overrightarrow{C}$ of $\overrightarrow{M}(\Ga)$ with underlying (unoriented) circuit $C$. 

It is an elementary fact that the cycles $\big\{\delta_{C} : C\in \calC \big(M(\Ga) \big) \big\}$ are a spanning set for $H_1(\Ga,\ZZ)$. Moreover, we can choose a basis as follows. Let $B=\{e_1,\ldots,e_g\}$ be a basis of the cographic matroid $M^*(\Ga)$, so that $T=\Ga\backslash B$ is a spanning tree. For each $i=1,\ldots,g$ let $C_i$ be the unique circuit of $M(\Ga)$ supported on $T\cup \{e_i\}$, then the cycles $\delta_{C_i}$ form a basis for $H_1(\Ga,\ZZ)$. 
Therefore, we can associate a pptav $\Jac \big(\overrightarrow{M}(\Ga)\big)$ to the oriented matroid $\overrightarrow{M}(\Ga)$ equipped with the edge length function $\ell:E(\Ga)\to \RR_{>0}$, and $\Jac\big(\overrightarrow{M}(\Ga)\big)$ is isomorphic to the Jacobian $\Jac(\Ga)$.

However, even more is true. The graph Jacobian $\Jac(\Ga)$ does not depend on the choice of orientation of $\Ga$. By~\cite[Corollary~6.2.8]{1978BlandLasVargnas}, graphic matroids have a unique reorientation class, hence any oriented matroid having underlying matroid $M(\Ga)$ is in fact obtained from an orientation of $\Ga$. Therefore, we can construct $\Jac \big(\overrightarrow{M}(\Ga)\big)$ directly from the matroid $M(\Ga)$ by choosing any structure of an oriented matroid, completely bypassing graphs. The resulting object, which we denote $\Jac(M(\Ga))$, is isomorphic to the Jacobian $\Jac(\Ga)$ of any graph whose matroid is isomorphic (in a way that preserves edge lengths) to $M(\Ga)$.
%
%
%
%
%
We summarize the reconstruction procedure in the following diagram: 
\begin{center}
    \begin{tikzcd}[row sep = huge, column sep = huge]
    \begin{minipage}{2.5cm}
        \centering
        oriented \\ metric graph
    \end{minipage} \arrow[r] \arrow[d] & \begin{minipage}{3cm}
        \centering
        oriented \\ graphic matroid
    \end{minipage} \arrow[d] \arrow[dr] & \\
    \mbox{metric graph} \arrow[u, bend left = 30, "\mbox{orient graph}"] \arrow[r] \arrow[rr, bend right = 20] & \mbox{graphic matroid} 
     \arrow[u, bend left = 30, "\mbox{orient matroid}"]\arrow[r, dashed] & \mbox{tropical Jacobian} 
    \end{tikzcd}
\end{center}

In this section, we describe an analogue of the fundamental cycle construction to determine the principalized Prym variety $\Prym_p(\tGa/\Ga)$ of a double cover $\pi:\tGa\to \Ga$ from the signed graphic matroid $M(\tGa/\Ga)$ and its dual $M^*(\tGa/\Ga)$:
\begin{center}
    \begin{tikzcd}[row sep = huge, column sep = huge]
    \begin{minipage}{2.5cm}
        \centering
        oriented \\ double cover
    \end{minipage} \arrow[r] \arrow[d] & \begin{minipage}{3cm}
        \centering
        oriented signed \\ graphic matroid \\
        + index function
    \end{minipage} \arrow[d] \arrow[dr] & \\
    \mbox{double cover} \arrow[r] \arrow[u, bend left = 30, "\mbox{orient cover}"] \arrow[rr, bend right = 20] & \begin{minipage}{4cm}
        \centering
        signed graphic matroid \\ + index function
    \end{minipage}  \arrow[u, bend left = 30, "\mbox{?}"]
    & \mbox{tropical Prym variety} 
    \end{tikzcd}
\end{center}
First, it is necessary to equip $M^*(\tGa/\Ga)$ with an auxiliary index function, this is explained in Section~\ref{subsec:index}. We then explain in Section~\ref{subsec:orientedmatroids} how an orientation of the double cover determines an orientation on $M(\tGa/\Ga)$ and $M^*(\tGa/\Ga)$. We then construct the Prym variety $\Prym_p(\tGa/\Ga)$ from the oriented matroid $\overrightarrow{M}(\tGa/\Ga)$, the index function, and the edge length function. Unlike the case of graphs, it is not true that all orientations of $M(\tGa/\Ga)$ come from a choice of orientation on the double cover (for example, we show in Section~\ref{subsec:isomPryms} that $U_{2,4}$ is a signed graphic matroid, and \cite[Example~1]{1995GelfandRybnikovStone} shows that $U_{2,4}$ has three reorientation classes). Hence we are not presently able to fully reconstruct $\Prym_p(\tGa/\Ga)$ solely from the matroid $M(\tGa/\Ga)$, we still need to retain the reorientation class that is determined by the double cover. In addition, we are not currently able to recover the induced polarization and reconstruct the non-principally polarized $\Prym(\tGa/\Ga)$.

\subsection{The fundamental cycle construction}\label{subsec:fundcycle} In this section, we describe an analogue of the fundamental cycle construction for double covers. We first observe that the dilated edges of $G$ do not contribute to the matroid $M(\tG/G)$, and we show that they are not seen by the Prym variety either. 

\begin{lemma} \label{lem:edgefree} 
Let $\pi : \tilde\Gamma \to \Gamma$ be a double cover of metric graphs and let $\pi_{\ef} : \tilde\Gamma_{\ef} \to \Gamma_{\ef}$ be the edge-free double cover obtained by contracting all dilated edges of $\Ga$ (see Example~\ref{ex:resolution}). Then the signed graphic matroids and the Prym varieties of the two double covers are isomorphic:
\[
M(\tGa/\Ga)\cong M(\tGa_{\ef}/\Ga_{\ef}),\qquad \Prym(\tGa/\Ga)\cong \Prym(\tGa_{\ef}/\Ga_{\ef}),\qquad\Prym_p(\tGa/\Ga)\cong \Prym_p(\tGa_{\ef}/\Ga_{\ef}).
\] 
\end{lemma}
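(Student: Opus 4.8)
The plan is to dispatch the matroid statement by citing the proof of Proposition~\ref{prop:Prymmatroid}, where it was already shown that $M^*(\tGa/\Ga)$ and $M^*(\tGa_\ef/\Ga_\ef)$ are \emph{canonically} isomorphic: contracting the dilated edges leaves the ground set $E_\ud(\Ga)=E_\ud(\Ga_\ef)$ and the independence condition of Definition~\ref{def:Prymmatroid} unchanged. Dualizing gives $M(\tGa/\Ga)\cong M(\tGa_\ef/\Ga_\ef)$ at once. The real content is the comparison of Prym varieties, and the single tool I would use throughout is the contraction map $c\colon\tGa\to\tGa_\ef$ together with the base contraction $c_\Ga\colon\Ga\to\Ga_\ef$. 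These commute with the covers, $\pi_\ef\circ c=c_\Ga\circ\pi$ (Section~\ref{subsec:edgecontraction}), so the induced maps on $H_1$ fit into a commutative square. Contracting a connected subgraph induces a surjection on $H_1$ (long exact sequence of the pair), and $\Ker c_*$ is the image of the $H_1$ of the contracted part $p^{-1}(\Ga_\dil)$, hence is supported entirely on dilated edges; this last observation drives everything.

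First I would treat $\Prym_p$, whose defining lattice is $\Ker\pi_*$ with the pairing~\eqref{eq:pairingPrym}. By~\eqref{eq:chainsontarget} every element of $\Ker\pi_*$ is supported on undilated edges (the coefficient of a dilated edge $e$ in $\pi_*\gamma$ is exactly $\gamma(\tilde e)$, forcing it to vanish), so $\Ker\pi_*\cap\Ker c_*=0$ by disjointness of supports and $c_*|_{\Ker\pi_*}$ is injective. For surjectivity onto $\Ker(\pi_\ef)_*$ I would lift an element through the surjection $c_*$, observe via the commutative square that its image under $\pi_*$ lies in $\Ker(c_\Ga)_*$, and then correct the lift: since $p^{-1}(\Ga_\dil)\to\Ga_\dil$ is an isomorphism, $\pi_*$ maps $\Ker c_*$ onto $\Ker(c_\Ga)_*$, so a suitable element of $\Ker c_*$ can be subtracted to land in $\Ker\pi_*$. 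This yields an isomorphism $\Ker\pi_*\xrightarrow{\sim}\Ker(\pi_\ef)_*$ that preserves coefficients on undilated edges. Because the pairing~\eqref{eq:pairingPrym} only reads off undilated edges and their lengths, both of which are untouched by the contraction, the isomorphism is an isomorphism of pptavs, giving $\Prym_p(\tGa/\Ga)\cong\Prym_p(\tGa_\ef/\Ga_\ef)$.

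Finally, for the polarized $\Prym$ I would run the analogous argument on the other lattice $K=(\Coker\pi^*)^{\torf}$. The commutative square for $\pi^*$ and the surjectivity of $(c_\Ga)_*$ produce an induced surjection $\Coker\pi^*\twoheadrightarrow\Coker(\pi_\ef)^*$, and the point is to show its kernel is finite: it is a quotient of $\Ker c_*$, and for any cycle $\tilde\delta$ on the dilated part one has $2\tilde\delta=\pi^*(\pi_*\tilde\delta)\in\operatorname{im}\pi^*$, so the kernel is killed by $2$. A surjection of lattices with finite kernel is an isomorphism on torsion-free quotients, hence $K\cong K_\ef$; the same coefficient-preservation argument shows the induced pairing $[\cdot,\cdot]_K$ is preserved, so $\Prym(\tGa/\Ga)\cong\Prym(\tGa_\ef/\Ga_\ef)$. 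I expect this last paragraph to be the main obstacle: one must verify carefully that the multiplicity-two behavior of $\pi^*$ along dilated edges makes the cokernel map an isomorphism only after killing torsion, with the surjectivity correction in the $\Prym_p$ step being the one other place that needs genuine care.
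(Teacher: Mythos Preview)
Your proposal is correct and follows essentially the same approach as the paper: both use the contraction maps on $H_1$ together with the observation that elements of $\Ker\pi_*$ are supported on undilated edges, while the kernel of the contraction is supported on dilated edges. The only differences are stylistic---the paper contracts one dilated edge at a time rather than all at once, and simply asserts ``we similarly identify $(\Coker\pi^*)^{\torf}$ and $(\Coker\pi_e^*)^{\torf}$'' where you supply the $2$-torsion argument in detail---so your version is, if anything, more complete.
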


\begin{proof} We already established that $M(\tGa/\Ga)\cong M(\tGa_{\ef}/\Ga_{\ef})$ in the last part of the proof of Proposition~\ref{prop:Prymmatroid}. 
To prove that the Pryms are isomorphic, we consider what happens when a single dilated edge $e$ is contracted. Let $\pi_e:\tGa_e\to \Ga_e$ be the double cover obtained by contracting $e$.
There are natural surjective contraction maps $H_1(\tGa,\ZZ)\to H_1(\tGa_e,\ZZ)$ and $H_1(\Ga,\ZZ)\to H_1(\Ga_e,\ZZ)$, given by setting the corresponding edges to zero. If $\gamma$ is a cycle in $\Ker \big(H_1(\tGa,\ZZ)\to H_1(\Ga,\ZZ) \big)$, then $e$ does not occur in $\gamma$, hence $\gamma$ corresponds to an element of $\Ker \big(H_1(\tGa_e,\ZZ)\to H_1(\Ga_e,\ZZ) \big)$, and therefore the two kernels are identified by the contraction maps. We similarly identify $(\Coker \pi^*)^{\torf}$ and $(\Coker \pi_e^*)^{\torf}$, and it is clear that the pairings and polarizations agree. Hence we have $\Prym(\tGa/\Ga)\cong \Prym(\tGa_e/\Ga_e)$ and therefore $\Prym_p(\tGa/\Ga)\cong \Prym_p(\tGa_e/\Ga_e)$ as well.
\end{proof}  

By Lemma~\ref{lem:edgefree} it is possible to restrict attention to edge-free double covers, in other words replace each connected component of the dilation subgraph by a single dilated vertex. We now recall the notation for $\Ker p_*$ of a double cover $p:\tG\to G$ introduced in Sections~\ref{subsec:edgecontraction} and~\ref{subsec:Prym}. Recall that we chose a labeling $p^{-1}(v)=\{\tv^{\pm }\}$ of the undilated vertices $v\in V(G)\backslash V(G_{\dil})$. We then label $p^{-1}(h)=\{\th^{\pm}\}$ the preimages of the undilated half-edges, and require $r(\th^{\pm})=\widetilde{r(h)}^{\pm}$ if an undilated half-edge is rooted at an undilated vertex. The involution on $\tG$ is given by a parity assignment $\sigma:E_{\ud}(G)\to \{\pm 1\}$ via $\iota(\th^\pm)=\widetilde{\iota(h)}^{\pm\sigma(e)}$, where $\sigma(h)=\sigma \big(\iota(h) \big)=\sigma(e)$ for an edge $e=\big\{h,\iota(h) \big\}\in E(G)$, and we can further relabel to assume that $\sigma(e)=+1$ if $e$ has at least one dilated root vertex. Choosing consistent orientations for $\tG$ and $G$ determines a labeling $p^{-1}(e)=\{\te^+,\te^-\}$ for undilated edges $e\in E_{\ud}(G)$, and we represent elements $\gamma\in \Ker p_*$ as chains on the target graph (see Equation~\eqref{eq:chainsontarget}):
\[
\gamma=\sum_{e\in E(G)}\gamma(e)\cdot(\te^+-\te^-)= \sum_{e\in E(G)}\gamma(e)\cdot e\in C_1(G,\ZZ).
\]
A chain $\ga\in C_1(G,\ZZ)$ represents an element of $\Ker p_*$ if the above expression is closed as a chain on $\tG$. This is most conveniently stated in the language of oriented matroids, which we introduce in Section~\ref{subsec:orientedmatroids}.

We observed in Proposition~\ref{prop:signedgraphic} that a set $F\subset E(G)$ is dependent for the signed graphic matroid $M(\tG/G)$ if and only if the induced map $p[F]_*$ on homology has nontrivial kernel. If $F$ is a circuit, in other words a minimal dependent set, then the kernel is one-dimensional and has a unique generator up to sign. 

\begin{definition} Let $p:\tG\to G$ be an oriented double cover, let $C$ be a circuit of the signed graphic matroid $M(\tG/G)$, and let $p[C]:p^{-1}(G[C])\to G[C]$ be the restriction of $p$. The \emph{fundamental cycle $\gamma_C$ associated to }$C$
\[
\gamma_{C}=\sum_{e\in E(G)}\gamma_{C}(e)\cdot (\te^+-\te^-)=\sum_{e\in E(G)}\gamma_{C}(e)\cdot e
\]
is the unique (up to sign) generator of $\Ker p[C]_*$.
    \label{def:fundcycle}
\end{definition}

Figure~\ref{fig:circuits} shows the six circuit types and gives formulas for the fundamental cycles. Specifically, define the cycles $\ga'_C\in \Ker p_*$ by the formulas
\begin{equation}
    \gamma'_{C}(e)=\begin{cases} \pm 2, & \text{if } e\in C\mbox{ is a bridge of }G[C] ,\\
	\pm 1, & \text{if } e\in C\mbox{ is not a bridge of }G[C],\\
	0, & \text{if } e\notin C,
\end{cases}
\label{eq:fundcycle}
\end{equation}
where the signs are chosen in such a way that $\ga'_C$ is closed. Then it is easy to see that
\begin{equation}    
\ga_C=\frac{\ga'_C}{\gcd \big\{ \big|\ga'_C(e) \big|:e\in C \big\} }=
\begin{cases}
    \ga'_C, & \text{if } C\mbox{ is of types I-V},\\
    \ga'_C/2, & \text{if } C\mbox{ is of type VI}.
\end{cases}
\label{eq:fundcycle2}
\end{equation}


\subsection{The index function}\label{subsec:index} We now describe the additional decorations of the signed cographic matroid of a double cover that are needed to determine the coefficients of the cycles $\gamma_C$, and hence reconstruct $\Ker p_*$ and the Prym variety. 

Equation~\eqref{eq:fundcycle} tells us that we need a way to detect whether an edge $e$ of a circuit is a bridge or not. This information is contained in the matroid $M(G)$, but not in $M(\tG/G)$. Instead, we observe that Equation~\eqref{eq:Prymvolume} for the volume of the Prym (in contrast to Equation~\eqref{eq:Jacvolume} for the Jacobian) contains an extra quantity, namely the index of an ogod, which must therefore play a role in the reconstruction. We now extend this function from the ogods, which are the bases of $M^*(\tG/G)$, to all independent sets of $M^*(\tG/G)$.

\begin{definition} \label{def:index}
	For a double cover $p : \tilde G \to G$, we endow the matroid $M^*(\tilde G /G)$ with the \emph{index function} defined as 
	\[ \ind : \mathcal{I}\big(M^*(\tilde G / G)\big) \longrightarrow \ZZ_{>0}, \qquad F \longmapsto \text{number of connected components of } G \backslash F. \]
\end{definition}

We emphasize that the index function is defined graph-theoretically and cannot be computed from the matroid $M^*(\tG/G)$. In particular, when counting the connected components of $G \backslash F$, isolated dilated vertices are taken into account, so Zaslavsky's definition of $M^*(\tG/G)$ cannot be used to determine the index. We collect some basic properties of the index function in the following lemma, which we leave without proof.

\begin{lemma} \label{lemma:index}
    Let $p : \tilde G \to G$ be a double cover of graphs, let $M^* = M^*(\tilde G / G)$ be the signed cographic matroid, and let $\ind$ be the index function. Then the following hold.
    \begin{enumerate}
        \item If $F\in \mathcal{I}(M^*)$ and $F' \subset F$, then $\ind(F') \leq \ind(F)$.
        \item For every $F \in \mathcal{I}(M^*)$ we have $\ind(F) \leq \rk M^*+1$. If $F$ is a basis then we also have $d(\tG/G) \leq \ind(F)$.
        \item If $B, B' \in \mathcal{B}(M^*)$ differ by only one element, then $\big|\ind(B')-\ind(B) \big|\leq 1$.
    \end{enumerate}
\end{lemma}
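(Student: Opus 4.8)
The plan is to reduce all three parts to a single elementary graph-theoretic observation: deleting one edge from a graph changes its number of connected components by either $0$ or $1$, the increase occurring precisely when that edge is a bridge of the graph from which it is removed. Parts~(1) and~(3), together with the first inequality of~(2), will follow from this fact alone, while the dilation bound in~(2) will require invoking the structure of ogods established in Proposition~\ref{prop:Prymmatroid}.

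For part~(1), note first that since independence is closed under passage to subsets (matroid axiom~(2)), the set $F'\subset F$ is itself independent, so $\ind(F')$ is defined. I would then observe that $G\backslash F$ is obtained from $G\backslash F'$ by deleting the edges of $F\backslash F'$ one at a time; each such deletion leaves the component count unchanged or raises it by one, so $\ind(F')\leq \ind(F)$. The first inequality of part~(2) is the same idea applied to a connected $G$: starting from a single component and deleting the $|F|$ edges of $F$ yields $\ind(F)\leq |F|+1$, and since $F$ is independent in $M^*$ we have $|F|\leq \rk M^*$, giving $\ind(F)\leq \rk M^*+1$.

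For part~(3), write $B'=(B\backslash\{e\})\cup\{f\}$ and set $B_0=B\cap B'=B\backslash\{e\}=B'\backslash\{f\}$, which is independent as a subset of $B$. Both $G\backslash B$ and $G\backslash B'$ are then obtained from $G\backslash B_0$ by deleting a single further edge ($e$ or $f$ respectively), so each of $\ind(B)$ and $\ind(B')$ lies in $\{\ind(B_0),\,\ind(B_0)+1\}$, whence $\big|\ind(B')-\ind(B)\big|\leq 1$.

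The step I expect to require the most care is the dilation bound $d(\tG/G)\leq \ind(F)$ in part~(2), since it is the only one that uses the classification of elementary covers rather than pure edge-counting. Here $F$ is a basis, so $G\backslash F=G_1\cup\cdots\cup G_k$ with $k=\ind(F)$ and each restriction $p_i:p^{-1}(G_i)\to G_i$ elementary (Proposition~\ref{prop:Prymmatroid}). If $p$ is free then $d(\tG/G)=1\leq k$ trivially. If $p$ is dilated, I would use that deleting the undilated edges of $F$ does not alter $G_{\dil}$, so each connected component of $G_{\dil}$ is contained in exactly one $G_i$, which is therefore dilated and hence elementary of the first type; for such a component its dilation subgraph $(G_i)_{\dil}=G_{\dil}\cap G_i$ is connected and nonempty, i.e. a single connected component of $G_{\dil}$. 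This sets up a bijection between the connected components of $G_{\dil}$ and the dilated $G_i$, so $d(\tG/G)$ equals the number of dilated components $G_i$, which is at most $k=\ind(F)$.
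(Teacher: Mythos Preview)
Your argument is correct. The paper leaves this lemma without proof, so there is nothing to compare against; your reduction of parts~(1), (3), and the upper bound in~(2) to the elementary fact that a single edge deletion changes the component count by at most one is exactly the natural approach, and your use of the ogod structure (each dilated component $G_i$ of $G\backslash F$ has connected dilation subgraph $(G_i)_{\dil}$, forcing a bijection between components of $G_{\dil}$ and dilated $G_i$'s) cleanly handles the lower bound $d(\tG/G)\leq\ind(F)$.
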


Let $C$ be a circuit of the signed graphic matroid $M(\tG/G)$. We now show how to determine the absolute values of the coefficients of the fundamental cycle $\gamma_{C}\in \Ker p_*$. Let $e \in C$ be an edge. The set $C\backslash \{e\}$ is independent in $M(\tG/G)$ and is contained in a basis $E(G)\backslash F$, where $F\subset E(G)$ is an ogod. Each connected component of $G\backslash F$ contains a unique undilated cycle or a dilated component, but not both. Hence if $e\in C$ is a bridge, then the two connected components of $C\backslash \{e\}$ lie in different connected components of $G\backslash F$. In other words, $e$ is a bridge of $C$ if and only if $G\backslash F$ has one more connected component than $G\setminus \big(F\backslash \{e\} \big)$. Therefore, we have proved the following.

\begin{proposition} Let $p:\tG\to G$ be a double cover and let $C$ be a circuit of the signed graphic matroid $M(\tG/G)$. Then the absolute values of the coefficients in Equation~\eqref{eq:fundcycle} can be computed as
 \[
\big|\gamma'_C(e)\big|=\begin{cases}
    2^{\ind(F)-\ind(F\backslash\{e\})}, & \text{if } e\in C,\\
    0, & \text{else,}
\end{cases}
\]
where $F$ is any ogod such that $C\backslash \{e\}\subset E(G)\backslash F$. 
    \label{prop:indexformula}
\end{proposition}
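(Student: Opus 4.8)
The plan is to reduce the statement to a purely graph-theoretic comparison and then read off the exponent from the bridge structure of $G[C]$. Comparing the asserted formula with Equation~\eqref{eq:fundcycle}, for $e\notin C$ both sides vanish, so I would only need to show that for $e\in C$ the exponent $\ind(F)-\ind(F\setminus\{e\})$ equals $1$ exactly when $e$ is a bridge of $G[C]$ and equals $0$ otherwise. Throughout I work with edge-free double covers, as permitted by Lemma~\ref{lem:edgefree}. First I would note that $e\in F$: since $C$ is dependent in $M(\tG/G)$ whereas $C\setminus\{e\}$ is contained in the basis $E(G)\setminus F$ and is therefore independent, the edge $e$ cannot also lie in that basis. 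Hence $G\setminus F=\big(G\setminus(F\setminus\{e\})\big)\setminus\{e\}$, and because deleting one edge increases the number of connected components by one precisely when the edge is a bridge,
\[
\ind(F)-\ind(F\setminus\{e\})=
\begin{cases}
1, & e\text{ is a bridge of }G\setminus(F\setminus\{e\}),\\
0, & \text{otherwise.}
\end{cases}
\]

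It then suffices to prove the equivalence: $e$ is a bridge of $G[C]$ if and only if $e$ is a bridge of $G\setminus(F\setminus\{e\})$. One implication is immediate: if $e$ is not a bridge of $G[C]$ it lies on a cycle of $G[C]$, and since $C\subset E(G)\setminus(F\setminus\{e\})$ this cycle survives in $G\setminus(F\setminus\{e\})$, so $e$ is not a bridge there.

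The reverse implication is the heart of the argument and the step I expect to require the most care. Suppose $e$ is a bridge of $G[C]$, with (necessarily distinct) endpoints $u,v$, and write $G[C]\setminus\{e\}=C'\sqcup C''$ with $u\in C'$ and $v\in C''$; as the edges of $C\setminus\{e\}$ all lie in $E(G)\setminus F$, both pieces sit inside $G\setminus F$. The key input is that each of $C',C''$ carries a distinguishing \emph{feature}, by which I mean an undilated cycle or a dilated vertex. This I would extract from Proposition~\ref{prop:signedgraphic}(3) by a short leaf-pruning argument: a connected side containing neither a cycle nor a dilated vertex would be a tree on undilated vertices and hence would exhibit an undilated vertex of valency one in $G[C]$ (either a leaf distinct from $u$, or $u$ itself if the side is a single vertex), which a circuit does not possess. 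Now suppose for contradiction that $C'$ and $C''$ lay in the same component $G_i$ of $G\setminus F$. Since $F$ is an ogod, $G_i$ is an elementary double cover, and in the edge-free setting such a $G_i$ contains exactly one feature: either a single dilated vertex and no cycle (first type) or a unique undilated cycle and no dilated vertex (second type). But $C'$ and $C''$ are vertex-disjoint and each contributes a feature of $G_i$, forcing at least two features into $G_i$ — a contradiction. Therefore $u$ and $v$ lie in different components of $G\setminus F$, so adding $e$ back merges them and $e$ is a bridge of $G\setminus(F\setminus\{e\})$.

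Assembling the pieces with Equation~\eqref{eq:fundcycle} finishes the proof: when $e$ is a bridge of $G[C]$ we get $|\gamma'_C(e)|=2=2^{1}$ and the exponent is $1$, and otherwise $|\gamma'_C(e)|=1=2^{0}$ and the exponent is $0$. The only genuinely delicate points are the leaf-pruning step that forces a feature onto each side of the bridge and the dichotomy of elementary components that caps the number of features at one; the rest is routine.
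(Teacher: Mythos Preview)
Your proof is correct and follows essentially the same route as the paper's argument, which appears in the paragraph immediately preceding the proposition: both reduce to showing that $e$ is a bridge of $G[C]$ if and only if adding $e$ back to $G\setminus F$ merges two components, and both use the fact that each elementary component of $G\setminus F$ carries exactly one ``feature'' (an undilated cycle or a dilated vertex/component). You simply supply more detail than the paper does---the explicit reduction to the edge-free case, the leaf-pruning argument extracting a feature on each side of the bridge via Proposition~\ref{prop:signedgraphic}(3), and the separate treatment of the easy direction of the equivalence---whereas the paper asserts the key step tersely and leaves the converse implicit.
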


\subsection{Oriented matroids} \label{subsec:orientedmatroids} To reconstruct the signs of the coefficients $\ga_C(e)$ of the fundamental cycles, it is convenient to use the language of oriented matroids. We recall the definitions (a standard reference is~ \cite{Oriented_matroids}). 



Given a finite set $E = \{e_1, \ldots, e_n\}$, a \emph{signed subset} $\overrightarrow{S}$ of $E$ is a subset of $E^+ \sqcup E^-$, where $E^+ = \{e_1^+, \ldots, e_n^+\}$ and $E^- = \{e_1^-, \ldots, e_n^-\}$ are copies of $E$. For a signed subset $\overrightarrow{S}$ of $E$ define 
\begin{itemize}
	\item the \emph{opposite set} $-\overrightarrow{S} = \big\{e^+ : e^- \in \overrightarrow{S} \big\} \cup \big\{e^- : e^+ \in \overrightarrow{S} \big\}$, 
	\item the \emph{underlying set} $S = \big\{e : e^+ \in \overrightarrow{S} \text{ or } e^- \in \overrightarrow{S} \big\}$, and
	\item the \emph{positive (resp. negative) part} $S^+ = \big\{ e: e^+ \in \overrightarrow{S} \big\}$ (resp. $S^- = \big\{ e: e^- \in \overrightarrow{S} \big\}$). 
\end{itemize}
It is understood that the underlying sets of $S^+$ and $S^-$ are disjoint. Therefore, for $e \in E$ we write
\[
\overrightarrow{S}(e)=\begin{cases}
    +1, & \text{if } e^+ \in \overrightarrow{S},\\
    -1, & \text{if } e^- \in \overrightarrow{S},\\
    0, &\mbox{otherwise}.
\end{cases}
\]

For our purposes, it is convenient to define an \emph{oriented matroid} $\overrightarrow{M}$ on the ground set $E$ in terms of its \emph{oriented circuits} $\mathcal{C}(\overrightarrow{M})$, a family of signed subsets of $E$ satisfying the following axioms:
\begin{enumerate}
	\item $\emptyset \not\in \mathcal{C}(\overrightarrow{M})$.
	\item If $\overrightarrow{S} \in \mathcal{C}(\overrightarrow{M})$ then $-\overrightarrow{S} \in \mathcal{C}(\overrightarrow{M})$. 
	\item For $\overrightarrow{S}, \overrightarrow{T} \in \mathcal{C}(\overrightarrow{M})$ with $S \subset T$ we have either $\overrightarrow{S} = \overrightarrow{T}$ or $\overrightarrow{S} = -\overrightarrow{T}$.
	\item For all $\overrightarrow{S}, \overrightarrow{T} \in \mathcal{C}(\overrightarrow{M})$ with $\overrightarrow{S} \neq \pm\overrightarrow{T}$ and $e \in S^+ \cap T^-$ there exists $\overrightarrow{U} \in \mathcal{C}(\overrightarrow{M})$ such that $U^+ \subset (S^+ \cup T^+) \setminus \{e\}$ and $U^- \subset (S^- \cup T^-) \setminus \{e\}$.
\end{enumerate}

For any oriented matroid $\overrightarrow{M}$ on $E$, there is an underlying matroid $M$ on $E$ whose circuits are the underlying sets $C$ of the oriented circuits $\overrightarrow{C} \in \mathcal{C}(\overrightarrow{M})$. Furthermore, for every circuit $C$ of $M$ there are precisely two circuits $\overrightarrow{C}_1, \overrightarrow{C}_2 \in \mathcal{C}(\overrightarrow{M})$ with underlying set $C$ and $\overrightarrow{C}_1 = - \overrightarrow{C}_2$.
A matroid $M$ is called \emph{orientable} if there is an oriented matroid $\overrightarrow{M}$ having underlying matroid $M$.


In~\cite{1991Zaslavsky}, Zaslavsky described how to orient the signed graphic matroid of a signed graph, and here we translate his construction into the language of double covers. To simplify exposition, we first describe how to orient the graphic matroid of an ordinary graph.

\begin{example} \label{ex:orientedgraphic}
    Let $G = \big(V(G), H(G), r,\iota \big)$ be a graph. An orientation on $G$ is defined by a function $o:H(G)\to \{\pm 1\}$ on the half-edges satisfying the property 
\begin{equation}
o(h)o \big(\iota(h) \big) =-1\label{eq:orient}
\end{equation}
for any edge $e=\{h,h'\}\in E(G)$. We view a half-edge $h\in H(G)$ rooted at $v=r(h)$ as pointing towards $v$ if $o(h)=+1$ and away from $v$ if $o(h)=-1$. With respect to this choice of orientation, we can define the simplicial chain complex of $G$ as
\[
d:\ZZ^{E(G)}\longrightarrow \ZZ^{V(G)},\qquad e\longmapsto \sum_{h\in e}o(h)\cdot r(h),
\]
and the simplicial homology group as
\[
H_1(G,\ZZ)=\Ker d=\left\{\sum_{e\in E(G)}\ga(e)\cdot e\in \ZZ^{E(G)}: \sum_{h\in T_vG} o(h)\ga(h)=0\mbox{ for all }v\in V(G)\right\},
\]
where $\ga(h)=\ga(e)$ for $h\in e$. Each circuit $C$ of $M(G)$ supports a primitive cycle $\delta_C \in H_1(G, \ZZ)$, unique up to sign, such that $\delta_C(e)=\pm 1$ for all $e \in C$ and $0$ otherwise. The circuits $\overrightarrow{C} = C^+ \sqcup C^-$ of the \emph{oriented graphic matroid} $\overrightarrow{M}(G)$ of the oriented graph $G$ have the form
    \[ C^+ = \big\{ e \in C : \delta_C(e) = +1 \big\} \qquad \text{and} \qquad C^- = \big\{ e \in C : \delta_C(e) = -1 \big\}, \]
    where $C$ ranges over the circuits of $M(G)$ and $\delta_C$ is one of the two primitive cycles supported on $C$. It is elementary to verify that $\overrightarrow{M}(G)$ is an oriented matroid on the ground set $E(G)$, having underlying matroid $M(G)$. The cycles $\delta_C$ can be recovered from $\overrightarrow{M}(G)$ up to sign and span $H_1(G,\ZZ)$.
    Hence the oriented matroid $\overrightarrow{M}(G)$ determines, after choosing a length function $\ell:E(G)\to \RR_{>0}$, the tropical Jacobian.

\end{example}

We now consider a double cover $p:\tG\to G$. Choose an orientation for $G$, in other words choose a function $o:H(G)\to \{\pm 1\}$ satisfying Equation~\eqref{eq:orient}. This induces an orientation on $\tG$, and in terms of the labeling introduced in Section~\ref{subsec:fundcycle}, any element $\ga\in \Ker p_*$ has the form
\[
\ga=\sum_{e\in E_{\ud}(G)}\ga(e)\cdot (\te^+-\te^-).
\]
Denoting $\ga(h)=\ga(e)$ when $h\in e$, it is then elementary to verify that 
\begin{equation}
\Ker p_*=\left\{
\sum_{e\in E_{\ud}(G)}\ga(e)\cdot (\te^+-\te^-):
\sum_{h\in T_vG} \tau(h)\ga(h)=0\mbox{ for all }v\in V(G)\backslash V(G_{\dil})
\right\}
\label{eq:Kerp}
\end{equation}
where the function $\tau:H_{\ud}(G)\to \{\pm 1\}$ is defined by 
\[
\tau(h)=\begin{cases}
    \sigma(e),& \text{if } o(h)=1\mbox{ and }h\in e,\\
    -1, & \text{if } o(h)=-1.
\end{cases}
\]
The function $\tau$ satisfies
\[
\tau(h)\tau \big(\iota(h) \big)=-\sigma(e)
\]
and defines an \emph{orientation of the signed graph} $(G,\sigma)$ (see Equation (2.2) in~\cite{1991Zaslavsky}). This in turn defines the structure of an oriented matroid $\overrightarrow{M}(\tG/G)$ on $M(\tG/G)$, and hence an induced orientation on $M^*(\tG/G)$, as follows (see Theorem 3.3 in~\cite{1991Zaslavsky}). Given an orientation $\tau:H_{\ud}(G)\to \{\pm 1\}$, Equation~\eqref{eq:Kerp} determines which $1$-chains supported on $E_{\ud}(G)$ represent elements of $\Ker p_*$. In particular, given a circuit $C\in \mathcal{C}\big(M(\tG/G)\big)$, we can now determine the signs of the coefficients of the fundamental cycle $\ga_C$, the absolute values having been determined in Section~\ref{subsec:index}:
\begin{equation}
	\ga_C=\sum_{e\in E_{\ud}(G)}\overrightarrow{C}(e) \big|\ga_C(e) \big|\cdot e=
	\sum_{e\in E_{\ud}(G)}\overrightarrow{C}(e) \big|\ga_C(e) \big|\cdot (\te^+-\te^-).
	\label{eq:cycleformula}
\end{equation}
The functions $\overrightarrow{C}(e)$ then represent the two oriented circuits of $\overrightarrow{M}(\tG/G)$ lying over $C$, for the two possible generators $\ga_C$ of $\Ker p[C]_*$.

\subsection{Reconstructing the Prym variety from the matroid} We now explain how to recover the Prym variety of a double cover $\pi:\tGa\to \Ga$ from the signed graphic matroid $M(\tGa/\Ga)$ equipped with the index function $\ind:\mathcal{I} \big(M^*(\tGa/\Ga) \big) \to \ZZ_{>0}$, orientation $\overrightarrow{M}(\tGa/\Ga)$, and edge length function $\ell:E(\Ga)\to \RR_{>0}$. The circuits $C\in \mathcal{C} \big(M(\tGa/\Ga) \big)$ determine fundamental cycles $\ga_C\in \Ker \pi_*$. Specifically, the index determines the $ \big|\ga_C(e) \big|$ by Equation~\eqref{eq:fundcycle2} and Proposition~\ref{prop:indexformula}, while the orientation determines the signs by Equation~\eqref{eq:cycleformula}. This data is sufficient to reconstruct $\Ker \pi_*$.

\begin{proposition} The lattice $\Ker \pi_*\subset H_1(\tGa,\ZZ)$ of a double cover $\pi:\tGa\to \Ga$ is spanned by the fundamental cycles $\ga_C$, where $C$ ranges over the circuits of $M(\tGa/\Ga)$.
\label{prop:fundcyclePrymspan}

\end{proposition}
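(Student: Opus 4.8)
The plan is to adapt the classical fact that the fundamental cycles $\delta_C$ span $H_1(\Ga,\ZZ)$, while accounting for the failure of unimodularity reflected by the coefficients $\pm 2$ on bridge edges in Equation~\eqref{eq:fundcycle}. Rather than exhibiting an explicit basis, I would prove the spanning statement directly by a descent on the size of the support. For $\gamma\in\Ker\pi_*$, viewed as a chain on the target as in~\eqref{eq:chainsontarget}, write $\operatorname{supp}(\gamma)=\{e\in E_{\ud}(G):\gamma(e)\neq 0\}$, and let $L=\langle \gamma_C : C\in\calC(M(\tGa/\Ga))\rangle$ be the sublattice generated by all fundamental cycles. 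By Definition~\ref{def:fundcycle} each $\gamma_C$ lies in $\Ker\pi_*$, so $L\subseteq\Ker\pi_*$; the content is the reverse inclusion.

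First I would observe that the support of any nonzero $\gamma\in\Ker\pi_*$ is a dependent set of $M(\tGa/\Ga)$. Indeed, if $\operatorname{supp}(\gamma)=F$ were independent, then the closed chain $\gamma$ would define a nonzero class in $\Ker p[F]_*\subseteq H_1\big(p^{-1}(G[F]),\ZZ\big)$, contradicting the injectivity of $p[F]_*$ guaranteed by Proposition~\ref{prop:signedgraphic}(\ref{item:Mindep}). Consequently $\operatorname{supp}(\gamma)$ contains a circuit $C$, and the fundamental cycle $\gamma_C$ is supported on $C\subseteq\operatorname{supp}(\gamma)$.

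The key point — and the one place where the factor of $2$ must be handled — is that every circuit $C$ carries an edge $e$ with $|\gamma_C(e)|=1$. This follows from Equations~\eqref{eq:fundcycle} and~\eqref{eq:fundcycle2}: the cycle $\gamma_C$ is primitive and all of its coefficients lie in $\{0,\pm1,\pm2\}$, so they cannot all be even, forcing some $|\gamma_C(e)|=1$ (equivalently, by inspection of Figure~\ref{fig:circuits}, no circuit consists entirely of bridges, since a connected all-bridge configuration is a tree, over which the pushforward on $H_1$ is injective). Choosing such an $e$, I set $\gamma'=\gamma-\gamma(e)\,\gamma_C(e)^{-1}\,\gamma_C$; since $\gamma_C(e)^{-1}=\pm1$ this is an integral combination lying in $\Ker\pi_*$, it vanishes at $e$, and its support satisfies $\operatorname{supp}(\gamma')\subseteq\operatorname{supp}(\gamma)\setminus\{e\}$ because $\operatorname{supp}(\gamma_C)=C\subseteq\operatorname{supp}(\gamma)$. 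Inducting on $|\operatorname{supp}(\gamma)|$, with empty support giving $\gamma=0\in L$, we conclude $\gamma\in L$, whence $\Ker\pi_*\subseteq L$ and the two lattices coincide.

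I expect the descent itself to be routine; the main obstacle is precisely this pivoting step, where the coefficients $\pm2$ on bridges of $G[C]$ obstruct a naive totally-unimodular argument and force us to pivot only on the non-bridge (coefficient $\pm1$) edges. As a sanity check one may verify full rank separately: taking an ogod $F=\{e_1,\dots,e_h\}$ (a basis of $M^*(\tGa/\Ga)$, so that $E_{\ud}(G)\setminus F$ is a basis of $M(\tGa/\Ga)$) and letting $C_i$ be the fundamental circuit of $e_i$, the matrix $\big(\gamma_{C_i}(e_j)\big)_{i,j}$ is diagonal with nonzero entries, so the $h=\rk M^*(\tGa/\Ga)=\dim\Prym(\tGa/\Ga)$ cycles $\gamma_{C_i}$ are already linearly independent and $L$ has full rank.
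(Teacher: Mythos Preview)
Your descent argument is correct and is genuinely different from the paper's proof. The paper splits into cases by dilation index: when $d(\tGa/\Ga)=1$ it invokes the later Proposition~\ref{prop:fundcyclePrym} (an index-one ogod yields an explicit basis of fundamental cycles), and when $d(\tGa/\Ga)>1$ it first peels off type~VI circuits one at a time---using that a type~VI circuit always has a coefficient $\pm1$ edge---until each remaining connected component has dilation index one. Your argument is more uniform: you never distinguish circuit types or dilation index, and you do not rely on the forward reference to Proposition~\ref{prop:fundcyclePrym}. The paper's approach, on the other hand, has the advantage of producing an explicit basis in the $d=1$ case rather than merely a spanning set.

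One small correction: your parenthetical ``equivalently, no circuit consists entirely of bridges, since a connected all-bridge configuration is a tree, over which the pushforward on $H_1$ is injective'' is false as stated. A type~VI circuit is a single undilated edge between two dilated vertices; this edge \emph{is} a bridge of $G[C]$, so $G[C]$ is a tree, yet $p^{-1}(G[C])$ is a $2$-cycle and the pushforward on $H_1$ has nontrivial kernel. What actually happens is that $\gamma'_C$ has all coefficients $\pm2$, and then the division by $\gcd$ in~\eqref{eq:fundcycle2} brings them down to $\pm1$. Your main justification via primitivity (``$\gamma_C$ is primitive with coefficients in $\{0,\pm1,\pm2\}$, so not all can be even'') is correct and already suffices; simply drop or repair the parenthetical.
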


\begin{proof} By definition, $\gamma_C \in \Ker \pi_*$ for all circuits $C$ of $M(\tGa / \Ga)$. Conversely, let $\pi:\tGa\to \Ga$ be a double cover. If the dilation index $d(\tGa/\Ga)=1$, then in fact $\Ker \pi_*$ has a basis consisting of fundamental cycles $\ga_C$ obtained by choosing an ogod $F\subset \mathcal{B}\big(M^*(\tGa/\Ga)\big)$ of minimal index; we prove this in Proposition~\ref{prop:fundcyclePrym}. Hence we assume that $d(\tGa/\Ga)>1$. Let $\ga\in \Ker \pi_*$. Choose an edge $e_1\in E_{\ud}(\Ga)$ lying on a type VI circuit $C_1$ (for example, choose any path connecting distinct connected components of $\Ga_{\dil}$). Then $\ga_{C_1}(e_1)=\pm 1$, and we choose $\ga_{C_1}$ so that $\ga_{C_1}(e_1)=1$. We then have $\ga=\ga(e_1)\ga_{C_1}+\ga_1$, where $\ga_1\in \Ker \pi_*$ and is supported on $\Ga\backslash \{e_1\}$.

We now remove $e_1$ from $\Ga$ and proceed by induction to obtain $\ga=\sum \ga(e_i)\ga_{C_i}+\ga'$, where each $\ga_{C_i}$ is a type VI circuit and $\ga'\in \Ker \pi_*$ is supported on the disconnected graph $\Ga'=\Ga\backslash \{e_1,\ldots,e_k\}$, which has no type VI circuits. It follows that each connected component of $\Ga'$ has connected dilation subgraph and hence dilation index one, so the restriction of $\ga'$ to each connected component lies in the span of the fundamental circuits by Proposition~\ref{prop:fundcyclePrym}. This completes the proof.
%
%
%
%
%
%
\end{proof}


For the reconstruction of $\Prym_p(\tGa/\Ga)$ it is convenient to collect all the necessary matroidal data in a definition.

\begin{definition} \label{def:PrymM} Consider the following data:
\begin{itemize}
    \item A signed graphic matroid $M$ on a ground set $E$, together with an orientation $\overrightarrow{M}$ induced by an oriented double cover representing $M$.
    \item An index function $\ind:\mathcal{I}(M^*)\to \ZZ_{>0}$ induced from the oriented double cover representing $\overrightarrow{M}$.
    \item An edge length function $\ell$ on $E$.
\end{itemize}
We define a pptav $\big(\Lambda, \Lambda, [\cdot, \cdot] \big)$, which depends on all of this data but which we simply denote $\Prym(M)$. For each circuit $C\in \mathcal{C}(M)$, let $\ga_C\in \ZZ^E$ be the fundamental cycle given by Equation~\eqref{eq:cycleformula}. We let $\Lambda$ be the lattice spanned in $\ZZ^E$ by the $\gamma_C$ for $C$ ranging over the oriented circuits of $M$, and define the pairing to be 
    \begin{equation}
        [\gamma_{C_1}, \gamma_{C_2}] = \sum_{e\in E} 2\gamma_{C_1}(e) \gamma_{C_2}(e) \ell(e).
        \label{eq:Prympairing}
    \end{equation}
Finally, the polarization $\xi:\La\to \La$ is the identity map. 
\end{definition}

We now state the main result of our paper. 

\begin{theorem} 
\label{thm:Prym_from_matroid}
	Let $\pi : \tilde \Gamma \to \Gamma$ be a double cover of metric graphs. Choose an orientation for $\pi$, and let $M = M(\tilde\Gamma / \Gamma)$ be the signed graphic matroid of $\pi$ with induced orientation $\overrightarrow{M}$. Let $\ind:\mathcal{I}(M^*)\to \ZZ_{>0}$ be the index function and let $\ell:E(\Ga)\to \RR_{>0}$ be the edge length function. Then $\Prym(M)$ is a pptav and
	\[ \Prym_p(\tilde \Gamma / \Gamma) \cong \Prym(M) \]
    as pptavs. In particular, $\Prym(M)$ only depends on the reorientation class of $\overrightarrow{M}$ and not on $\overrightarrow{M}$ itself.
\end{theorem}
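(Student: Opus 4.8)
The plan is to produce an explicit isomorphism of pptavs by matching the defining triples of $\Prym_p(\tGa/\Ga)$ and $\Prym(M)$ term by term. The essential point is that the two a priori different notions of fundamental cycle — the homological generator of $\Ker p[C]_*$ from Definition~\ref{def:fundcycle}, and the matroidal vector in $\ZZ^E$ assembled from the index and the orientation via Equation~\eqref{eq:cycleformula} — agree under the chain representation~\eqref{eq:chainsontarget}. By Lemma~\ref{lem:edgefree} we may assume $\pi$ is edge-free, so that $E = E_{\ud}(\Ga)$ is the common ground set.

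First I would set up the lattice identification. Equation~\eqref{eq:chainsontarget} writes every $\gamma \in \Ker\pi_*$ as the chain $\sum_{e \in E}\gamma(e)(\te^+-\te^-)$ on $\tGa$, and recording the coefficient vector $(\gamma(e))_{e}$ defines a homomorphism $c\colon \Ker\pi_* \to \ZZ^E$. This map is injective: the edges $\te^{+}$ and $\te^{-}$ are distinct edges of $\tGa$, so a chain all of whose coefficients vanish is zero. Under $c$, the homological fundamental cycle $\gamma_C$ of Definition~\ref{def:fundcycle} is sent to the matroidal cycle of Equation~\eqref{eq:cycleformula}, because its absolute values $|\gamma_C(e)|$ are computed from the index by Proposition~\ref{prop:indexformula} together with~\eqref{eq:fundcycle2}, and its signs are the entries $\overrightarrow{C}(e)$ prescribed by the orientation of $\pi$. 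Hence $c(\gamma_C)$ is exactly the lattice generator indexed by $C$ in Definition~\ref{def:PrymM}. By Proposition~\ref{prop:fundcyclePrymspan} the $\gamma_C$ span $\Ker\pi_*$, so $c(\Ker\pi_*)$ is the lattice spanned by the $c(\gamma_C)$, which is precisely $\Lambda$. Therefore $c\colon \Ker\pi_* \to \Lambda$ is an isomorphism of free abelian groups.

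Next I would match the remaining data. The pairing~\eqref{eq:pairingPrym} on $\Ker\pi_*$ and the pairing~\eqref{eq:Prympairing} on $\Lambda$ are given by the identical expression $\sum_{e}2\gamma(e)\gamma'(e)\ell(e)$, so $c$ intertwines them; in particular this form is symmetric and positive definite, since each $\ell(e)>0$ and $[\gamma,\gamma]=0$ forces every $\gamma(e)=0$, which confirms that $\Prym(M)$ is a genuine pptav. As both principal polarizations are the identity map on the respective lattice, the pairing-preserving lattice isomorphism $c$ respects the polarizations and therefore induces an isomorphism of pptavs, giving $\Prym_p(\tGa/\Ga)\cong\Prym(M)$.

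For the final assertion I would argue directly from Definition~\ref{def:PrymM}. Reorienting $\overrightarrow{M}$ along a subset $A\subseteq E$ replaces each oriented circuit $\overrightarrow{C}$ by the signed set whose entries are flipped exactly on $A$, while leaving the underlying matroid, its circuits, and the (orientation-independent) index function unchanged; thus each new fundamental cycle is $\phi(\gamma_C)$, where $\phi\colon\ZZ^E\to\ZZ^E$ is the automorphism $\gamma\mapsto\sum_e\epsilon_e\gamma(e)\,e$ with $\epsilon_e=-1$ for $e\in A$ and $\epsilon_e=+1$ otherwise. Since $\epsilon_e^2=1$, the map $\phi$ preserves the pairing~\eqref{eq:Prympairing}, sends the generating set $\{\gamma_C\}$ onto the reoriented generating set, and commutes with the identity polarizations, so it is an isomorphism between $\Prym(M)$ and its reorientation. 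As the reorientation class is exactly the set of oriented matroids obtained by such sign reversals, $\Prym(M)$ depends only on that class. The only real content lies in the coincidence of the two fundamental-cycle constructions and in the spanning statement of Proposition~\ref{prop:fundcyclePrymspan}; granting those, every remaining step is a routine compatibility check, so I expect no serious obstacle beyond carefully bookkeeping the identification $c$.
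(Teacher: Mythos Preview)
Your proof is correct and follows essentially the same approach as the paper: identify $\Ker\pi_*$ with the lattice $\Lambda$ via the chain representation, invoke Proposition~\ref{prop:fundcyclePrymspan} for the spanning statement, and match the pairings term by term. Your treatment of reorientation independence via the explicit sign-flip automorphism $\phi$ is slightly more self-contained than the paper's, which instead observes that reorienting $\overrightarrow{M}$ amounts to reorienting the cover and that $\Prym_p(\tGa/\Ga)$ is manifestly orientation-independent.
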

\begin{proof} Recall that the Prym variety $\Prym(\tGa/\Ga)$ is the polarized tropical abelian variety defined by the lattices $K=(\Coker \pi^*)^{\torf}$, $K'=\Ker \pi_*$, and the pairing $[\cdot,\cdot]$ and polarization $\xi:K'\to K$ induced by the integration pairing and polarization on $\tGa$. The principalization $\Prym_p(\tGa/\Ga)$ is obtained by replacing $K$ by the image $\Im \xi$. By Proposition~\ref{prop:fundcyclePrymspan}, $K'=\Im \xi=\Lambda$ is the lattice spanned by the fundamental cycles $\ga_C$ corresponding to the oriented circuits of $M$. To verify that~\eqref{eq:Prympairing} is the pairing on $\Prym_p(\tGa/\Ga)$ given in Equation~\eqref{eq:pairingPrym}, we simply note that
\[
[\te_1^+-\te_1^-,\te_2^+-\te_2^-]=\begin{cases} 2\ell(e_1),& \text{if } e_1=e_2,\\
0, & \text{if } e_1\neq e_2
\end{cases}
\]
for any two undilated edges $e_1,e_2\in E_{\ud}(\Ga)$.
In particular, $\Prym(M)$ is a pptav and does not change under reorientation of $M$, which is equivalent to reorienting the cover $\tGa \to \Ga$.
\end{proof}

\begin{remark} \label{rem:reconstruction}
    We note that we are not currently able to reconstruct, directly from the matroid, the Prym variety $\Prym(\tGa/\Ga)$, except in the following two cases. Recall that the induced polarization on $\Prym(\tGa/\Ga)$ has type $(1,\ldots,1,2,\ldots,2)$, where the number of $1$s is equal to $d(\tGa/\Ga)-1$. If $d(\tGa/\Ga)=1$ (in other words, if $\pi:\tGa\to \Ga$ is free or has connected dilation subgraph), then the induced polarization is twice a principal polarization, and we can reconstruct $K$ by doubling the lattice $K'=\Ker \pi_*$. On the other hand, if $d(\tGa/\Ga)=g(\tGa)-g(\Ga)+1$ (equivalently, if $\Ga$ is a tree, as in Example~\ref{ex:trees}), then the induced polarization on $\Prym(\tGa/\Ga)$ is already principal and $\Prym_p(\tGa/\Ga)=\Prym(\tGa/\Ga)$. We also note that the dilation index $d(\tGa/\Ga)$ (and hence the polarization type) can in fact be read off the indexed matroid $M(\tGa/\Ga)$ as the minimal index of an ogod $F\in \mathcal{B}\big(M^*(\tGa/\Ga) \big)$ (see Lemma~\ref{lemma:index} and Lemma~\ref{lemma:ogodindex}). 
\end{remark}

The following is an immediate consequence of Theorem~\ref{thm:Prym_from_matroid}.

\begin{corollary}  \label{cor:criterion}
    Let $\pi_i : \tilde\Gamma_i \to \Gamma_i$ for $i = 1,2$ be double covers. Let $(M_i, \overrightarrow{M}_i, \ind_i, \ell_i)$ be the associated matroidal data packages, i.e. $M_i = M(\tilde\Gamma_i / \Gamma_i)$ with oriented matroid structure $\overrightarrow{M}_i$ induced by the choice of some orientation on $\pi_i$, $\ind_i$ is the index function on $M^*_i$ and $\ell_i$ the edge-length function on $M_i$. If there is an isomorphism of the matroidal data packages, i.e. a bijection $E_\ud(\Gamma_1) \to E_\ud(\Gamma_2)$ commuting with the $\ell_i$ which induces a bijection $\mathcal{I}(M^*_1) \to \mathcal{I}(M^*_2)$ commuting with the $\ind_i$ and which induces an isomorphism between $\overrightarrow{M}_1$ and a reorientation of $\overrightarrow{M}_2$    
    then $\Prym_p(\tilde \Gamma_1 / \Gamma_1) \cong \Prym_p(\tilde \Gamma_2 / \Gamma_2)$ as pptavs.
\end{corollary}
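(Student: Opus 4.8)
The plan is to reduce the statement to Theorem~\ref{thm:Prym_from_matroid}, which already expresses the principalized Prym variety entirely in terms of the matroidal data package. Applying that theorem to each cover gives isomorphisms of pptavs $\Prym_p(\tilde\Gamma_i/\Gamma_i)\cong\Prym(M_i)$ for $i=1,2$, so it suffices to produce an isomorphism $\Prym(M_1)\cong\Prym(M_2)$. Let $\phi\colon E_\ud(\Gamma_1)\to E_\ud(\Gamma_2)$ be the given bijection and let $\Phi\colon \ZZ^{E_\ud(\Gamma_1)}\to\ZZ^{E_\ud(\Gamma_2)}$ be its $\ZZ$-linear extension, so that $\Phi(v)(\phi(e))=v(e)$. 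I would show that $\Phi$ restricts to an isomorphism of the lattices of Definition~\ref{def:PrymM} and preserves the pairing and polarization.

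The preservation of the pairing is immediate and requires no sign bookkeeping. Since the pairing~\eqref{eq:Prympairing} is diagonal in the edges and $\phi$ commutes with the edge-length functions, for all chains $v,w$ we have
\[
[\Phi v,\Phi w]_2=\sum_{e\in E_\ud(\Gamma_1)}2\,v(e)\,w(e)\,\ell_2(\phi(e))=\sum_{e\in E_\ud(\Gamma_1)}2\,v(e)\,w(e)\,\ell_1(e)=[v,w]_1 .
\]
Likewise the polarizations are the identity on both sides, so they match automatically. Thus the only real content is to check that $\Phi$ carries the lattice $\Lambda_1$ spanned by the fundamental cycles of $M_1$ onto the corresponding lattice $\Lambda_2$ of $M_2$.

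The key step is therefore to verify that $\Phi$ sends each fundamental cycle $\gamma_C$ of $M_1$ into $\Lambda_2$, and surjectively so. This decomposes into the three ingredients that determine $\gamma_C$ through Equations~\eqref{eq:fundcycle2} and~\eqref{eq:cycleformula}. First, because $\phi$ induces a bijection $\mathcal{I}(M^*_1)\to\mathcal{I}(M^*_2)$, it is an isomorphism $M^*_1\cong M^*_2$, hence $M_1\cong M_2$, so $\phi$ maps the circuits of $M_1$ (and the supports of the $\gamma_C$) bijectively onto those of $M_2$. Second, the absolute values $|\gamma_C(e)|$ are computed from the index function by Proposition~\ref{prop:indexformula} as $2^{\ind(F)-\ind(F\setminus\{e\})}$, followed by the gcd-normalization of Equation~\eqref{eq:fundcycle2}; since $\phi$ commutes with the index functions and carries ogods to ogods, these absolute values are preserved. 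Third, the signs $\overrightarrow{C}(e)$ come from the oriented matroid, and $\phi$ realizes $\overrightarrow{M}_1$ as a reorientation of $\overrightarrow{M}_2$, so they agree up to the diagonal sign change $D$ on $\ZZ^{E_\ud(\Gamma_2)}$ encoding that reorientation. Consequently $\Phi(\gamma_C)=\pm D\gamma_{\phi(C)}$, and since the $\gamma_{\phi(C)}$ span $\Lambda_2$ as $C$ ranges over the circuits, we obtain $\Phi(\Lambda_1)=D\Lambda_2$.

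Finally, the last sentence of Theorem~\ref{thm:Prym_from_matroid} asserts that $\Prym(M_2)$ depends only on the reorientation class of $\overrightarrow{M}_2$; concretely this means $D\Lambda_2=\Lambda_2$, so that $\Phi(\Lambda_1)=\Lambda_2$. Hence $\Phi$ restricts to a pairing-preserving isomorphism $\Lambda_1\xrightarrow{\sim}\Lambda_2$ of pptavs, and chaining the isomorphisms yields $\Prym_p(\tilde\Gamma_1/\Gamma_1)\cong\Prym(M_1)\cong\Prym(M_2)\cong\Prym_p(\tilde\Gamma_2/\Gamma_2)$. The only point needing care, rather than a genuine obstacle, is precisely this reorientation bookkeeping: the hypothesis matches $\overrightarrow{M}_1$ only with a reorientation of $\overrightarrow{M}_2$, and it is the reorientation-invariance built into Theorem~\ref{thm:Prym_from_matroid} that renders the discrepancy harmless.
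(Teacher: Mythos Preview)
Your approach is exactly what the paper intends: the corollary is stated there as an immediate consequence of Theorem~\ref{thm:Prym_from_matroid}, with no further argument, and you have correctly unpacked that implication by transporting the fundamental cycles along the bijection $\phi$ and checking that supports, absolute values, signs, pairing, and polarization all match.

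There is one small imprecision worth fixing. You write that reorientation-invariance ``concretely means $D\Lambda_2=\Lambda_2$,'' but this need not hold as an equality of sublattices of $\ZZ^{E_\ud(\Gamma_2)}$. What Theorem~\ref{thm:Prym_from_matroid} actually gives is that the pptavs built from $\Lambda_2$ and from $D\Lambda_2$ are \emph{isomorphic}, and the isomorphism is furnished by $D$ itself (which preserves the diagonal pairing since $D^2=\Id$). So the clean conclusion is that $D\circ\Phi$ (rather than $\Phi$) restricts to a pairing-preserving isomorphism $\Lambda_1\to\Lambda_2$; alternatively, chain $\Phi\colon\Lambda_1\xrightarrow{\sim}D\Lambda_2$ with $D\colon D\Lambda_2\xrightarrow{\sim}\Lambda_2$. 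This is a cosmetic fix and does not affect the substance of your argument.
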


A natural question is to what extent a double cover can be recovered from the associated matroid (cf. Problem~5.1 in \cite{1982Zaslavsky}). We give an example, and explore this question from a matroid-theoretic perspective in Section~\ref{sec:fibers}.

\begin{example} \label{ex:trees}
    Consider a double cover $\pi : \tilde \Gamma \to \Gamma$ where $\Gamma$ is a tree consisting of $n$ undilated edges all of length 1, and such that every vertex is dilated. We claim that all such covers have the same Prym variety. Indeed, since all vertices are dilated, removing edges never relatively disconnects $\pi$. Therefore the signed cographic matroid $M^*(\tGa / \Ga) = U_{n, n}$ is uniform, and the index function is $\ind(F) = |F|+1$, neither of which depend on the tree $\Gamma$. Moreover, since $U_{n, n}$ is in fact a graphic matroid, it is uniquely orientable up to reorientation. This shows that up to reorientation the matroidal package of $\pi$ depends only on $n$, and we conclude from Corollary~\ref{cor:criterion} that any two double covers $\pi_1$ and $\pi_2$ of the aforementioned form have the same Prym variety.
\end{example}

\subsection{Effective reconstruction and a basis of fundamental cycles}
In Theorem~\ref{thm:Prym_from_matroid} we reconstruct the principalized tropical Prym variety from the matroidal data of the double cover, by considering the fundamental cycles corresponding to all circuits of $M(\tGa/\Ga)$. This procedure is not computationally effective. In the case of graphs, to reconstruct the Jacobian, it is sufficient to use the circuits associated to a single spanning tree, obtained by adding each of the complementary edges one by one. We now explore whether an analogous construction produces the Prym variety. In Lemma~\ref{lemma:index}, we noted that the index function on a basis of $M^*(\tG/G)$ is bounded from below by the dilation index $d(\tG/G)$. We now show that this bound is sharp.

\begin{lemma} Let $p:\tG\to G$ be a double cover with $d(\tG/G)=1$ (so $p$ is either free or has connected dilation subgraph). Then there exists an ogod $F\subset E(G)$ of index one.
    \label{lemma:ogodindex}
\end{lemma}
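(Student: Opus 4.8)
The plan is to split along the two possibilities permitted by $d(\tG/G)=1$: either $p$ is free, or $p$ is dilated with connected dilation subgraph $G_{\dil}$. In each case I will exhibit an explicit ogod $F$ whose complement $G\backslash F$ is a \emph{single} elementary double cover; then $\ind(F)$, being the number of connected components of $G\backslash F$, is equal to $1$ by definition, and $F$ is an ogod (hence a basis of $M^*(\tG/G)$ by Proposition~\ref{prop:Prymmatroid}) by Definition~\ref{def:ogod}.

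The dilated case I expect to be essentially bookkeeping. Since $F$ consists of undilated edges, removing it never alters $G_{\dil}$, so the strategy is to delete just enough undilated edges to kill all genus outside $G_{\dil}$ while keeping the graph connected. Concretely, let $G_{\ef}$ be the edge-free contraction of Example~\ref{ex:resolution}, whose edge set is exactly $E_{\ud}(G)$ and in which $G_{\dil}$ is contracted to a single dilated vertex; choose any spanning tree $T$ of $G_{\ef}$ and set $F=E_{\ud}(G)\setminus E(T)$. Then $G\backslash F$ is $G_{\dil}$ together with the undilated tree-edges of $T$, which is connected because $G_{\dil}$ is connected and $T$ spans, and an Euler-characteristic count gives $g(G\backslash F)=g(G_{\dil})$. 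Thus $p|_{G\backslash F}$ is elementary of the first type (Definition~\ref{def:elementarydouble}), so $F$ is an ogod of index one.

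The free case is where the real work lies. Here $G$ is connected of some genus $g\geq 1$ and $p$ is a nontrivial free cover, and I want a spanning connected genus-one subgraph on which $p$ remains nontrivial; its complement will then be an index-one ogod, elementary of the second type. Fixing a parity assignment $\sigma$ representing $p$, it suffices to find a spanning tree $T$ and an edge $e\notin T$ whose fundamental cycle $C_e$ is \emph{unbalanced}, for then $F=E(G)\setminus(E(T)\cup\{e\})$ has complement $T\cup\{e\}$, a connected genus-one graph carrying a nontrivial cover. Producing such an $e$ is the main obstacle, and the key observation is cohomological: the switching class of $\sigma$ is a \emph{nonzero} element of $H^1(G,\ZZ/2\ZZ)=\Hom\big(H_1(G,\ZZ/2\ZZ),\ZZ/2\ZZ\big)$ exactly because $p$ is nontrivial, while the fundamental cycles $\{C_e\}_{e\notin T}$ reduce modulo $2$ to a basis of $H_1(G,\ZZ/2\ZZ)$; hence $\sigma$ cannot vanish on all of them, and any $e$ with $\sigma(C_e)=-1$ does the job.

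In both constructions $G\backslash F$ is a single connected elementary cover, so $F$ is an index-one ogod, completing the argument. The only points I would take care to verify in full are the genus computation under contraction of $G_{\dil}$ in the dilated case and the equivalence between an unbalanced fundamental cycle and a nontrivial restricted cover in the free case; both are routine given the structure theory of double covers recalled in Section~\ref{subsec:doublecovers}.
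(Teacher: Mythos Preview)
Your proof is correct and follows essentially the same strategy as the paper's: in the dilated case both you and the paper pass to the edge-free reduction and take the complement of a spanning tree, and in the free case both produce a spanning connected genus-one subgraph on which the cover is nontrivial. The only cosmetic difference is in the free case: the paper fixes an odd cycle first and then iteratively deletes edges while preserving connectivity and the cycle, whereas you fix a spanning tree first and use the cohomological observation that the nonzero class $[\sigma]\in H^1(G,\ZZ/2\ZZ)$ cannot vanish on every fundamental cycle; your version is slightly more explicit but the two arguments are interchangeable.
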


\begin{proof} If $p$ is free, we pick an odd cycle on $G$, having connected preimage in $\tG$. We then construct $F$ by iteratively removing edges of $G$ without disconnecting $G$ or breaking the chosen cycle. If $p$ is dilated with connected dilation subgraph, we assume for simplicity that $p$ is edge-free with a unique dilated vertex. Then $F$ is the set of edges in the complement of any spanning tree.
\end{proof}

We now show that a basis for $\Ker p_*$ can be constructed by starting with an ogod of index one. This gives an exact analogue for Pryms of the fundamental cycle construction for Jacobians. 

\begin{proposition} Let $p:\tG\to G$ be a double cover, and suppose that $F=\{e_1,\ldots,e_h\}\subset E(G)$ is an ogod of index one. For each $i=1,\ldots,h$, let $C_i$ denote the unique circuit of $M(\tG/G)$ in $G\backslash F\cup \{e_i\}$. Then the fundamental cycles $\ga_{C_i}$ are a basis for $\Ker p_*$ .

\label{prop:fundcyclePrym}
\end{proposition}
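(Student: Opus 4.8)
The plan is to mimic the classical fundamental-cycle argument for graphs. Let $F = \{e_1, \ldots, e_h\}$ be an ogod of index one, so $G \backslash F$ is connected and (by Proposition~\ref{prop:Prymmatroid}) is an elementary double cover, and $|F| = h = \rk M^*(\tG/G) = g(\tG) - g(G) = \dim \Prym(\tG/G)$. Since $F$ is an ogod, its complement $E_{\ud}(G) \backslash F$ is a basis of $M(\tG/G)$; adding back the single edge $e_i$ creates a unique circuit $C_i \subset (E_{\ud}(G)\backslash F) \cup \{e_i\}$, exactly as in the graphic case. This gives $h$ fundamental cycles $\ga_{C_1}, \ldots, \ga_{C_h} \in \Ker p_*$, and since $\dim \Ker p_* = h$ as well, it suffices to prove they are linearly independent (equivalently, that they span).

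The key observation, which I would establish first, is a triangularity property: the circuit $C_i$ contains $e_i$, and among the ogod edges $\{e_1, \ldots, e_h\}$ it contains \emph{only} $e_i$. Indeed, $C_i \backslash \{e_i\} \subset E_{\ud}(G) \backslash F$ by construction, so $C_i$ meets $F$ in precisely the edge $e_i$. Hence $\ga_{C_i}(e_i) \neq 0$ while $\ga_{C_i}(e_j) = 0$ for all $j \neq i$. Writing the cycles in the coordinates indexed by the ogod edges $e_1, \ldots, e_h$, the matrix $\big( \ga_{C_i}(e_j) \big)_{i,j}$ is diagonal with nonzero diagonal entries, so the $\ga_{C_i}$ are linearly independent over $\QQ$ and therefore form a basis of the rank-$h$ lattice $\Ker p_*$ after tensoring with $\QQ$.

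To upgrade linear independence to the statement that the $\ga_{C_i}$ are an \emph{integral} basis (and not merely a $\QQ$-basis of a finite-index sublattice), I would argue that the map $\Ker p_* \to \ZZ^h$ recording the coefficients $\ga \mapsto \big(\ga(e_1), \ldots, \ga(e_h)\big)$ on the ogod edges is injective: any $\ga \in \Ker p_*$ vanishing on all of $F$ is supported on $E_{\ud}(G)\backslash F$, but that set is independent in $M(\tG/G)$, so by Proposition~\ref{prop:signedgraphic}(2) the restricted pushforward is injective on homology and $\ga = 0$. This shows $\Ker p_*$ embeds in $\ZZ^h$, and it then remains to check that the diagonal entries $\ga_{C_i}(e_i)$ are each $\pm 1$, so that the image is all of the image of the $\ga_{C_i}$. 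Since $F$ has index one and $\ga_{C_i}(e_i) = \pm 2^{\ind(F) - \ind(F \backslash \{e_i\})}$ by Proposition~\ref{prop:indexformula} — here using that $C_i \cap F = \{e_i\}$ so $e_i$ is playing the role of the added edge relative to the ogod $F$ — I would verify that $\ind(F) = 1$ forces the exponent to be zero, i.e. $\ind(F\backslash\{e_i\}) = 1$ as well. This last point is the main obstacle: I expect it to follow from the fact that removing one edge from a minimal-index ogod cannot increase the number of connected components of $G \backslash F$ beyond what index one allows, combined with Lemma~\ref{lemma:index}, but the index bookkeeping is exactly where the argument must be carried out carefully rather than waved through. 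Once $\ga_{C_i}(e_i) = \pm 1$ is confirmed, the coefficient matrix is unimodular and the $\ga_{C_i}$ form an integral basis of $\Ker p_*$, completing the proof.
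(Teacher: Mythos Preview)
Your proposal is correct and follows essentially the same route as the paper's proof. The step you flag as the ``main obstacle'' is in fact immediate: by Lemma~\ref{lemma:index}(1) the index function is monotone under inclusion, so $\ind(F\backslash\{e_i\}) \leq \ind(F) = 1$, while $\ind$ is always at least $1$ since $G$ is connected; hence $\ind(F\backslash\{e_i\}) = 1$ and (via Proposition~\ref{prop:indexformula}) $|\gamma'_{C_i}(e_i)| = 1$, which forces the gcd in~\eqref{eq:fundcycle2} to be $1$ and gives $\gamma_{C_i}(e_i) = \pm 1$ as required.
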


\begin{proof} By Lemma~\ref{lemma:index} the index function is monotonous, so $\ind(F)=1$ implies that $\ind(F\backslash\{e_i\})=1$ for all $i=1,\ldots,h$. Hence Proposition~\ref{prop:indexformula} shows that $\ga_{C_i}(e_i)=\pm 1$, and we choose signs so that $\ga_{C_i}(e_i)=1$. In addition, $e_j\notin C_i$ and therefore $\ga_{C_i}(e_j)=0$ for $j\neq i$. 

Now let $\ga\in \Ker p_*$. Since $F$ is an ogod, the map $p^{-1}(G\backslash F)\to G\backslash F$ is injective on $H_1$. Hence $\ga=0$ if $\ga(e_i)=0$ for all $i$. It follows that
\[
\ga=\ga(e_1)\ga_{C_1}+\cdots+\ga(e_h)\ga_{C_h},
\]
and therefore the $\ga_{C_i}$ are a basis.
\end{proof}

The argument above shows that the fundamental cycles $\ga_{C_i}$ associated to an ogod $F$ of index $\ind(F)\geq 2$ are linearly independent and span a finite index sublattice of $\Ker p_*$. The index of this lattice in $\Ker p_*$ does not seem to be directly related to $\ind(F)$. For example, the unique ogod of Example~\ref{ex:trees} has index $n$ but gives a basis for $\Ker p_*$. On the other hand, Figure~\ref{fig:no_ogod} gives an example of a double cover with dilation index two for which no ogod gives a basis for $\Ker p_*$. Hence the fundamental cycle construction using a single ogod cannot generally be used when the dilation subgraph is disconnected.

    
    \begin{figure}
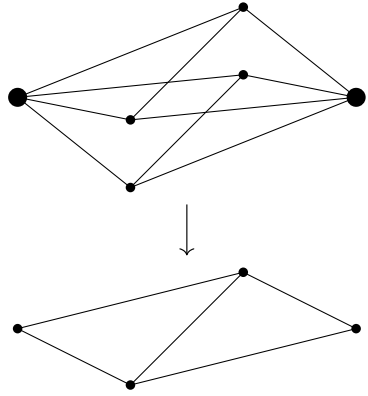

        \centering
        \doublecover{
            \vertex[2]{0}{0}
            \vertex[2]{3}{0}

            \foreach \i  in {-0.3, 0.3}{
                \vertex{1}{-0.5 + \i}
                \vertex{2}{0.5 + \i}
            
                \path[draw] (0,0) -- (1, -0.5 + \i) -- (2, 0.5 + \i) -- (3, 0);
                \path[draw] (0,0) -- (2, 0.5 + \i);
                \path[draw] (1, -0.5 + \i) -- (3,0);
            }
            
        }{
            \vertex{0}{0}
            \vertex{1}{-0.5}
            \vertex{2}{0.5}

            \path[draw] (0,0) -- (1, -0.5) -- (2, 0.5) -- (3, 0);
            \path[draw] (0,0) -- (2, 0.5);
            \path[draw] (1, -0.5) -- (3,0);

            \vertex{3}{0}
        
        }        
        \caption{Double cover with no ogod producing a basis for $\Ker p_*$}
        \label{fig:no_ogod}
    \end{figure}

\section{Invariance of \texorpdfstring{$\Prym_p(\tGa/\Ga)$}{} under the simplification of \texorpdfstring{$M^*(\tGa/\Ga)$}{}}
\label{sec:fibers}

The papers~\cite{2010CaporasoViviani} and~\cite{2011BrannettiMeloViviani} considered the tropical Torelli problem, in other words whether and to what extent a metric graph $\Ga$ can be reconstructed from its Jacobian $\Jac(\Ga)$. The authors reconstruct $\Jac(\Ga)$ from the graphic matroid $M(\Ga)$ and show that the Jacobian does not change when $M(\Ga)$ is replaced by its simplification, and that conversely the simplification class of the matroid determines the Jacobian.  

In this section, we construct the simplification of the signed cographic matroid $M^*(\tGa/\Ga)$ of a double cover of metric graphs $\pi:\tGa\to \Ga$, and show that $\Prym_p(\tGa/\Ga)$ does not change under simplification. We then give an example of two double covers having non-isomorphic simple matroids but isomorphic Pryms, contrary to the case of Jacobians. Hence the fibers of the tropical Prym--Torelli map are not fully described by the simplification procedure.


\subsection{Simplification of the signed cographic matroid} We recall that the simplification $M_{\simp}$ of a matroid $M$ is given by deleting all $1$-circuits, and then iteratively deleting elements in 2-circuits until there are no more 2-circuits. For the cographic matroid $M^*(\Ga)$, the simplification $M_{\simp}^*(\Ga)$ is obtained by contracting all bridges in $\Gamma$, and an edge $e_1$ in each pair $\{e_1,e_2\}$ of separating edges. It is shown in~\cite{2010CaporasoViviani} that if we also set the new length of $e_2$ to be $\ell(e_1)+\ell(e_2)$, then $\Jac(\Ga)$ does not change.

We now adapt this procedure to the signed cographic matroid $M^*(\tGa/\Ga)$ of a double cover $\pi:\tGa\to \Ga$ of metric graphs. A circuit of $M^*(\tilde \Gamma / \Gamma)$ is a minimal set of undilated edges $F$ of $\Gamma$ such that the restricted cover $\pi^{-1}(\Ga\backslash F) \to \Gamma\backslash F$ is not relatively connected. The types of the 1- and 2-circuits of $M^*(\tGa/\Ga)$ are shown on Figures~\ref{fig:1circuits} and~\ref{fig:2circuits}. We label the edges of a $2$-circuit with multiplicities as follows.


\begin{definition} \label{def:simplification_mult} Let $f_i$ be an edge lying in a $2$-circuit $F = \{f_1, f_2\}$ of the signed cographic matroid $M^*(\tGa/\Ga)$. The \emph{multiplicity of $f_i$} is 
    \[ \mult(f_i) = 2^{\ind(\{f_i\}) - \ind(\emptyset)} = 2^{\ind(\{f_i\}) - 1} \in \{1, 2\}. \]
\end{definition}
We note that $\mult(f)=2$ if and only if $f$ is a bridge edge of $\Ga$. We now define a non-canonical simplification process for double covers of metric graphs.

\begin{definition} \label{def:simplification}
	Let $\pi : \tilde\Gamma \to \Gamma$ be a double cover of metric graphs. The \emph{simplification} $\pi_\simp : \tilde\Gamma_\simp \to \Gamma_\simp$ of $\pi$ is defined as follows. First, we contract all edges of $\Gamma$ that are $1$-circuits of $M^*(\tilde\Gamma /\Gamma)$ (see Figure~\ref{fig:1circuits}). Then, for every $2$-circuit $\{f_1, f_2\}$ of $M^*(\tilde\Gamma / \Gamma)$ (see Figure~\ref{fig:2circuits}), we contract one of the edges, say $f_1$, and redefine the length of the other edge to be $\ell(f_2) + \left( \frac{\mult(f_1)}{\mult(f_2)} \right)^2 \ell(f_1)$. The contracted edge can be chosen arbitrarily except for a $2$-circuit $\{f_1,f_2\}$ with $\mult(f_1)=2$ and $\mult(f_2)=1$ (lower left diagram on Figure~\ref{fig:2circuits}), for which we always contract the edge $f_1$ having multiplicity $2$. 
 
\end{definition}

By construction, $M^*(\tGa_{\simp}/\Ga_{\simp})$ is the simplification of $M^*(\tGa/\Ga)$. We note that we do not contract a loop whose preimage is a pair of parallel edges, except for the $2$-circuit on the top right of Figure~\ref{fig:2circuits}. This type of circuit only occurs for free double covers, and the resulting contraction is a double cover with a single dilated vertex. Therefore, simplification does not change the dilation index of a double cover. We now show that the Prym variety does not change under simplification.

\begin{theorem} \label{thm:simplification}
	Let $\pi : \tilde\Gamma \to \Gamma$ be a double cover of metric graphs and let $\pi_\simp : \tilde\Gamma_\simp \to \Gamma_\simp$ be a simplification of $\pi$. Then $\Prym
 _p(\tilde\Gamma / \Gamma) \cong \Prym_p(\tilde\Gamma_\simp / \Gamma_\simp)$. 
\end{theorem}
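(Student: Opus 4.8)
The plan is to work directly with the explicit presentation of the principalized Prym as a pptav. By Theorem~\ref{thm:Prym_from_matroid} (or immediately from Equation~\eqref{eq:pairingPrym}), $\Prym_p(\tGa/\Ga)$ is the pptav $(K',K',[\cdot,\cdot])$ with $K'=\Ker\pi_*$, pairing
\[
[\gamma,\gamma']=\sum_{e\in E_\ud(\Ga)}2\gamma(e)\gamma'(e)\ell(e),
\]
and polarization the identity. Since such a pptav is determined up to isometry of the lattice-with-pairing, it suffices to produce, for each elementary simplification step, a pairing-preserving isomorphism onto the corresponding data of the contracted cover. By Lemma~\ref{lem:edgefree} I may assume $\pi$ is edge-free, and since $M^*(\tGa_\simp/\Ga_\simp)$ is the matroid simplification of $M^*(\tGa/\Ga)$, the passage to $\pi_\simp$ is a finite composition of two kinds of moves: contracting a $1$-circuit of $M^*$, and contracting one edge of a $2$-circuit of $M^*$ together with the length redistribution of Definition~\ref{def:simplification}. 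I would induct on the number of moves, treating the two kinds separately and composing the resulting isometries.

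The $1$-circuit case is immediate. A $1$-circuit $e$ of $M^*=M^*(\tGa/\Ga)$ is a loop of $M^*$, hence a coloop of the dual $M=M(\tGa/\Ga)$, so $e$ lies in no circuit of $M$ and $\gamma_C(e)=0$ for every circuit $C$. By Proposition~\ref{prop:fundcyclePrymspan} the cycles $\gamma_C$ span $\Ker\pi_*$, so $\gamma(e)=0$ for all $\gamma\in\Ker\pi_*$. Contracting $e$ therefore identifies $\Ker\pi_*$ with $\Ker(\pi_e)_*$ by dropping the everywhere-zero coordinate $e$, and $e$ contributes nothing to the pairing~\eqref{eq:pairingPrym}; this move preserves $\Prym_p$ verbatim, with no length redistribution.

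The substance is the $2$-circuit case. Let $\{f_1,f_2\}$ be a $2$-circuit of $M^*$, i.e.\ a cocircuit of $M$. Circuit--cocircuit orthogonality gives $|C\cap\{f_1,f_2\}|\neq1$ for every circuit $C$ of $M$, so each $C$ contains both or neither of $f_1,f_2$, and neither $f_i$ is a loop of $M$ (a loop would be a single-element circuit meeting the cocircuit in one point). The key is the multiplicity-weighted series relation: there is a sign $\varepsilon\in\{\pm1\}$, depending only on the orientation, with
\[
\mult(f_2)\,\gamma(f_1)=\varepsilon\,\mult(f_1)\,\gamma(f_2)\qquad\text{for all }\gamma\in\Ker\pi_*.
\]
By Proposition~\ref{prop:fundcyclePrymspan} it suffices to check this on the fundamental cycles $\gamma_C$: for $C$ avoiding $\{f_1,f_2\}$ both sides vanish, and for $C$ containing both I would compute $|\gamma_C(f_i)|$ from Equations~\eqref{eq:fundcycle}--\eqref{eq:fundcycle2}, the point being that for such $C$ the edge $f_i$ is a bridge of $\Ga[C]$ if and only if it is a bridge of $\Ga$, so that $|\gamma'_C(f_i)|=\mult(f_i)$ (recall $\mult(f_i)=2$ exactly when $f_i$ is a bridge, by Definition~\ref{def:simplification_mult}). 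This forces $|\gamma_C(f_1)|/|\gamma_C(f_2)|=\mult(f_1)/\mult(f_2)$, while orthogonality of the oriented circuit $\overrightarrow{C}$ with the oriented cocircuit $\{f_1,f_2\}$ fixes the common sign $\varepsilon$ independently of $C$. Granting the relation, the contraction of $f_1$ induces $\Ker\pi_*\to\Ker(\pi_{f_1})_*$, $\gamma\mapsto\bar\gamma$, dropping the $f_1$-coordinate; it is injective since $\bar\gamma=0$ forces $\gamma(f_2)=0$ and then $\gamma(f_1)=0$ by the relation (as $\mult(f_i)\neq0$), and surjective because $M^*(\tGa_{f_1}/\Ga_{f_1})=M^*\setminus f_1$ has the same fundamental cycles after deleting that coordinate. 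Finally it is an isometry: substituting $\gamma(f_1)=\varepsilon(\mult(f_1)/\mult(f_2))\gamma(f_2)$ (and likewise for $\gamma'$) into~\eqref{eq:pairingPrym} collapses the $f_1$- and $f_2$-terms into
\[
2\gamma(f_2)\gamma'(f_2)\Big(\ell(f_2)+\big(\tfrac{\mult(f_1)}{\mult(f_2)}\big)^2\ell(f_1)\Big)=2\bar\gamma(f_2)\bar\gamma'(f_2)\,\ell_\simp(f_2),
\]
which is exactly the pairing on $\Prym_p(\tGa_{f_1}/\Ga_{f_1})$ with the redistributed length of Definition~\ref{def:simplification}.

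I expect the main obstacle to be the magnitude statement $|\gamma'_C(f_i)|=\mult(f_i)$, i.e.\ that being a bridge of $\Ga[C]$ and being a bridge of $\Ga$ coincide for the two edges of a $2$-circuit; the series structure and the uniformity of the sign $\varepsilon$ come for free from orthogonality, but the magnitude requires understanding the circuits passing through a fixed cocircuit, which I would settle by the explicit case analysis of the four topological types of $2$-circuits in Figure~\ref{fig:2circuits}. A secondary point is that the asymmetric contraction rule of Definition~\ref{def:simplification} (contract the bridge $f_1$ in the mixed case $\mult(f_1)=2$, $\mult(f_2)=1$) is needed only to keep $\pi_\simp$ a genuine double cover of the correct dilation index—contracting the multiplicity-one edge would convert a free genus-one piece into an extra dilated component—whereas the isometry computation itself is symmetric in $f_1$ and $f_2$ and is insensitive to this choice.
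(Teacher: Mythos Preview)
Your approach is essentially the same as the paper's: both use circuit--cocircuit orthogonality to handle the $1$-circuits and the sign consistency, and both reduce the $2$-circuit case to a magnitude computation settled by case analysis over Figure~\ref{fig:2circuits}. The organization differs only cosmetically---you package the result as a lattice isometry via a ``series relation'' $\mult(f_2)\gamma(f_1)=\varepsilon\,\mult(f_1)\gamma(f_2)$, while the paper compares the pairing on generators before and after contraction---but the content is the same.

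There is, however, a genuine slip in your intermediate claim. You assert that for a circuit $C$ of $M(\tGa/\Ga)$ containing a $2$-circuit $\{f_1,f_2\}$ of $M^*$, the edge $f_i$ is a bridge of $\Ga[C]$ if and only if it is a bridge of $\Ga$, so that $|\gamma'_C(f_i)|=\mult(f_i)$. This fails for the Type~4 $2$-circuit (bottom right of Figure~\ref{fig:2circuits}): take two dilated vertices in the relatively connected block $A$ and a Type~VI path $C$ from one to the other that enters $B$ via $f_1$ and exits via $f_2$. Then $f_1,f_2$ are bridges of $\Ga[C]$ (every edge of a Type~VI circuit is), yet $\mult(f_1)=\mult(f_2)=1$ since neither is a bridge of $\Ga$. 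So $|\gamma'_C(f_i)|=2\neq\mult(f_i)$.

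What survives---and what you actually need---is the \emph{ratio} $|\gamma_C(f_1)|/|\gamma_C(f_2)|=\mult(f_1)/\mult(f_2)$, which still holds in the example above ($1/1=1$) because the normalization in Equation~\eqref{eq:fundcycle2} divides out the common factor. The paper proves exactly this ratio (phrased as $|\gamma_C(f_1)|=|\gamma_{C'}(f_2)|\cdot\mult(f_1)/\mult(f_2)$ and $|\gamma_C(f_2)|=|\gamma_{C'}(f_2)|$) by the same case analysis you propose. So your proof goes through once you replace the too-strong bridge claim with the ratio statement and carry out the case check; the rest of your argument, including the sign uniformity from oriented orthogonality and the isometry computation, is correct.
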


\begin{proof}
    Choose an orientation for the double cover $\pi$. This induces oriented matroid structures on $M(\tilde\Gamma / \Gamma)$ and $M^*(\tilde\Gamma / \Gamma)$. By \cite[Theorem~3.4.3]{Oriented_matroids} for every oriented circuit $\overrightarrow{C}\in \mathcal{C} \big(\overrightarrow{M}(\tGa/\Ga) \big)$ and every oriented circuit $\overrightarrow{D}\in\mathcal{C} \big(\overrightarrow{M}^*(\tGa/\Ga) \big)$, either $C \cap D = \emptyset$ or there are distinct edges $e,f \in C \cap D$ such that $\overrightarrow{C}(e)\overrightarrow{D}(e) = -\overrightarrow{C}(f)\overrightarrow{D}(f)$. This implies that if  $D=\{f\}$ is a 1-circuit of $M^*(\tGa/\Ga)$, then $f$ does not lie in any circuit of $M(\tilde\Gamma / \Gamma)$. But the Prym variety is generated by the fundamental cycles of the circuits of $M(\tilde\Gamma / \Gamma)$, and thus contracting $f$ does not change the Prym variety.

    Now let $D=\{f_1,f_2\}$ be a 2-circuit of $M^*(\tGa/\Ga)$ and let $C_1, C_2$ be any circuits of $M(\tGa/\Ga)$, possibly $C_1 = C_2$. We show that the pairing~\eqref{eq:pairingPrym} 
    \[
    [\ga_{C_1},\ga_{C_2}]=\sum_{e\in E(\Gamma)} 2 \ga_{C_1}(e)\ga_{C_2}(e)\ell(e) = \sum_{e\in C_1\cap C_2} 2 \big|\ga_{C_1}(e) \big| \big|\ga_{C_2}(e) \big|\overrightarrow{C}_1(e)\overrightarrow{C}_2(e)\ell(e)
    \]
    does not change under the suggested contraction of $f_1$. If $C_1 \cap C_2 \cap D = \emptyset$ then neither $f_1$ nor $f_2$ contribute to $[\ga_{C_1},\ga_{C_2}]$, so we assume without loss of generality that $f_1 \in C_1 \cap C_2$, and hence $f_2 \in C_1 \cap C_2$ as well by the aforementioned theorem. Still without loss of generality we may assume that $\overrightarrow{C}_1(f_1) = \overrightarrow{C}_2(f_1)$ (if not, simply replace $\overrightarrow{C}_1$ with $-\overrightarrow{C}_1$) and hence $\overrightarrow{C}_1(f_1)\overrightarrow{C}_2(f_1)=1$. It follows that 
    \[ -\overrightarrow{C}_1(f_2)\overrightarrow{D}(f_2) = \overrightarrow{C}_1(f_1)\overrightarrow{D}(f_1) = \overrightarrow{C}_2(f_1)\overrightarrow{D}(f_1) = -\overrightarrow{C}_2(f_2)\overrightarrow{D}(f_2), \]
    and hence$\overrightarrow{C}_1(f_2) = \overrightarrow{C}_2(f_2)$ and $\overrightarrow{C}_1(f_2)\overrightarrow{C}_2(f_2)=1$ as well. Therefore, the combined contribution from $f_1$ and $f_2$ to $[\ga_{C_1},\ga_{C_2}]$ is equal to 
    \begin{equation} 
    2\big|\ga_{C_1}(f_1)\big| \big|\ga_{C_2}(f_1)\big|\ell(f_1)+    2\big|\ga_{C_1}(f_2)\big| \big|\ga_{C_2}(f_2)\big|\ell(f_2).
    \label{eq:beforecontraction}
    \end{equation}
    
We now contract $f_1$ and rescale the length of $f_2$ to $\ell'(f_2)=\ell(f_2) + \left( \frac{\mult(f_1)}{\mult(f_2)} \right)^2 \ell(f_1)$. Denote $C'_1=C_1\backslash \{f_1\}$ and $C'_2=C_2\backslash \{f_1\}$ 
the contractions of $C_1$ and $C_2$, respectively. Orientations are preserved under contraction, hence $\overrightarrow{C}'_1(f_2)\overrightarrow{C}'_2(f_2)=\overrightarrow{C}_1(f_2)\overrightarrow{C}_2(f_2)=1$ and the contribution from $f_2$ to $[\ga_{C'_1},\ga_{C'_2}]$ is equal to 
\begin{equation}
2\big|\ga_{C'_1}(f_2)\big| \big|\ga_{C'_2}(f_2)\big| \ell'(f_2) =
2\big|\ga_{C'_1}(f_2)\big| \big|\ga_{C'_2}(f_2) \big| \left( \frac{\mult(f_1)}{\mult(f_2)} \right)^2 \ell(f_1) +
2\big|\ga_{C'_1}(f_2)\big| \big|\ga_{C'_2}(f_2)\big| \ell(f_2).
\label{eq:aftercontraction}
\end{equation}
To show that the right hand side of~\eqref{eq:aftercontraction} is equal to~\eqref{eq:beforecontraction}, it is enough to check for any circuit $C\in M(\tGa/\Ga)$ containing $D=\{f_1,f_2\}$ and contracting to $C'=C\backslash \{f_1\}$, we have
\[
\big|\ga_C(f_2)\big| = \big|\ga_{C'}(f_2) \big|\qquad \text{and} \qquad \big|\ga_{C}(f_1) \big| =
\big|\ga_{C'}(f_2) \big|\frac{\mult(f_1)}{\mult(f_2)}.
\]
This can be verified on a case-by-case basis by comparing types of $2$-circuits of $M^*(\tGa/\Ga)$ (Figure~\ref{fig:2circuits}) and circuits of $M(\tGa/\Ga)$ (Figure~\ref{fig:circuits}). Hence the Prym variety does not change when $f_1$ is contracted. Proceeding in this manner, we see that $\Prym_p(\tGa_{\simp}/\Ga_{\simp})$ is isomorphic to $\Prym_p(\tGa/\Ga)$.
%
%
%
%
%
\end{proof}

\subsection{The Prym does not determine the simplified matroid} \label{subsec:isomPryms} We now show that, unlike the case of graphs, double covers with non-isomorphic simple matroids may still have isomorphic Prym varieties.

	\begin{figure}
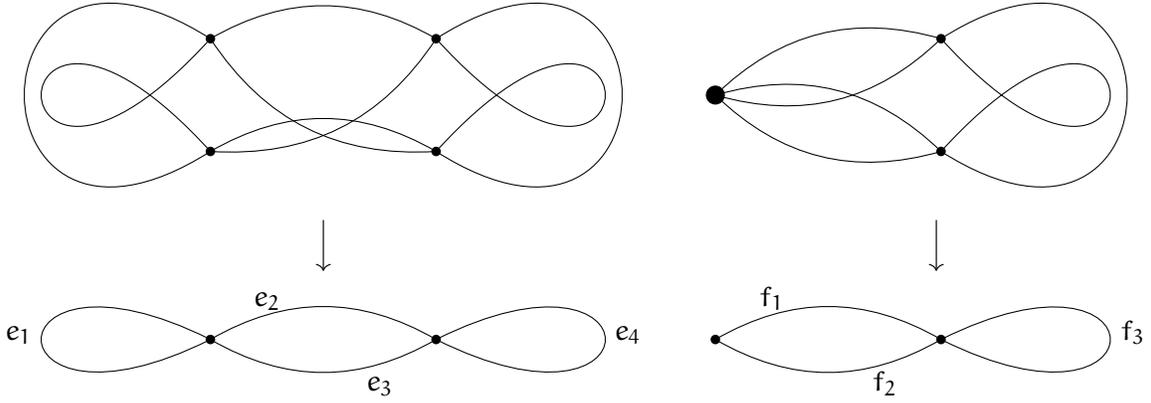

		\centering
		\doublecover{
			\clip (-2,-0.5) rectangle (4, 1.5);
			
			\vertex{0}{0}
			\vertex{0}{1}
			\vertex{2}{0}
			\vertex{2}{1}
			
			\spiralleft{0}{0}
			\path[draw] (0,0) to[bend left = 30] (2,0);
			\path[draw] (0,1) to[bend left = 30] (2,1);
			\path[draw] (0,0) to[bend right = 30] (2,1);
			\path[draw] (0,1) to[bend right = 30] (2,0);
			\spiral{2}{0}
			
		}{
			\clip (-2, -0.5) rectangle (4, 0.5);
			\vertex{0}{0}
			\vertex{2}{0}
			\drawselfloop{2}{0}
			\drawselfloopleft{0}{0}
			\path[draw] (0,0) to[bend left = 30] (2,0);
			\path[draw] (0,0) to[bend right = 30] (2,0);
			\draw (-1.7, 0) node {$e_1$};
			\draw (0.5, 0.3) node {$e_2$};
			\draw (1.5, -0.45) node {$e_3$};
			\draw (3.7, 0) node {$e_4$};
		}
		\doublecover{
			\clip (-2.4pt,-0.5) rectangle (4, 1.5);
			
			\vertex[2]{0}{0.5}
			\vertex{2}{0}
			\vertex{2}{1}
			
			\path[draw] (0,0.5) to[bend left = 30] (2,0);
			\path[draw] (0,0.5) to[bend left = 30] (2,1);
			\path[draw] (0,0.5) to[bend right = 30] (2,1);
			\path[draw] (0,0.5) to[bend right = 30] (2,0);
			\spiral{2}{0}
			
		}{
			\clip (-2.4pt, -0.5) rectangle (4, 0.5);
			\vertex{0}{0}
			\vertex{2}{0}
			\drawselfloop{2}{0}
			\path[draw] (0,0) to[bend left = 30] (2,0);
			\path[draw] (0,0) to[bend right = 30] (2,0);
			\draw (0.5, 0.3) node {$f_1$};
			\draw (1.5, -0.45) node {$f_2$};
			\draw (3.7, 0) node {$f_3$};
		}
		\caption{Double covers with isomorphic Pryms and non-isomorphic simple matroids.}
		\label{fig:not_necessary}
	\end{figure}

	Consider the double cover $\pi : \tilde \Gamma \to \Gamma$ depicted on Figure~\ref{fig:not_necessary} on the left, with edges $E(\Gamma) = \{e_1, e_2, e_3, e_4\}$ of lengths $x_i=\ell(e_i)$. The signed graphic matroid  $M(\tilde\Gamma / \Gamma) \cong U_{2,4}$ is the uniform matroid of rank 2 on 4 elements and its dual $M^*(\tilde\Gamma / \Gamma) = U_{2,4}^* = U_{2,4}$ is simple. To compute the Prym variety, we apply Proposition~\ref{prop:fundcyclePrym} to the index-1 ogod $\{e_3,e_4\}$. We obtain the following basis for $\Ker \pi_*$:
	\[ \gamma_1 = e_1 + e_2 + e_3, \qquad \gamma_2 = e_1 + 2e_2 + e_4 \]
    Thus the pairing $[\cdot, \cdot]$ on $\Prym_p(\tGa/\Ga)$ has Gram matrix 
	\begin{equation*}
		\begin{pmatrix}
			x_1 + x_2 + x_3 &  x_1 + 2x_2 \\
			x_1 + 2x_2 & x_1 + 4x_2 + x_4
		\end{pmatrix}.
	\end{equation*}

	Now we study the double cover $\sigma : \tilde\Delta \to \Delta$ on the right of Figure~\ref{fig:not_necessary} with edges $E(\Delta) = \{f_1, f_2, f_3\}$ of lengths $\ell(f_j)=y_j$. This time the signed graphic matroid is $M(\tilde\Delta / \Delta) = U_{1,3}$ and again the dual matroid $M^*(\tilde\Delta / \Delta) = U_{1,3}^* = U_{2,3}$ is simple, but obviously $U_{2,3} \not\cong U_{2,4}$. The ogod $\{f_2,f_3\}$ produces the basis 
	\[ \delta_1 = f_1 + f_2, \qquad \delta_2 = 2f_1 + f_3 \]
	for $\Ker \sigma_*$, and the Gram matrix of the pairing is
	\begin{equation*}
		\begin{pmatrix}
			y_1 + y_2 & 2y_1 \\
			2y_1 & 4y_1 + y_3
		\end{pmatrix}.
	\end{equation*}
 It is elementary to verify that the two Gram matrices are equal, and hence $\Prym_p(\tGa/\Ga)\cong \Prym_p (\widetilde\Delta/\Delta)$, for appropriate positive values of $x_i$ and $y_i$. 

	\bibliographystyle{alpha}
	\bibliography{references}
\end{document}